\def\sl{{\mathfrak{sl}}}
\newtheorem{theorem}{Theorem}[section]
\newtheorem{lemma}[theorem]{Lemma}
\newtheorem{proposition}[theorem]{Proposition}
\newtheorem{corollary}[theorem]{Corollary}
\theoremstyle{definition}
\theoremstyle{remark}
\newtheorem*{remark}{Remark}
\numberwithin{equation}{section}
\newcommand{\mm}[4]{\left(\begin{smallmatrix} #1 & #2\\ #3 & #4\end{smallmatrix}\right)}
\newcommand{\mb}[4]{\left(\begin{array}{cc} #1 & #2\\ #3 & #4\end{array}\right)}
\DeclareMathOperator{\val}{val}
\DeclareMathOperator{\tr}{tr}
\DeclareMathOperator{\SO}{SO}
\DeclareMathOperator{\Spin}{Spin}
\DeclareMathOperator{\SU}{SU}
\DeclareMathOperator{\SL}{SL}
\DeclareMathOperator{\GL}{GL}
\DeclareMathOperator{\diag}{diag}
\DeclareMathOperator{\charf}{char}
\newcommand{\F}{\mathbb{F}}
\newcommand{\Z}{\mathbb{Z}}
\newcommand{\Q}{\mathbb{Q}}
\newcommand{\A}{\mathbb{A}}
\newcommand{\C}{\mathbb{C}}
\newcommand{\R}{\mathbb{R}}
\definecolor{carmine}{rgb}{0.59, 0.0, 0.09}
\definecolor{fatma}{rgb}{0.04706, 1.33725, 0.56667}
\begin{document}
\title{The completed standard $L$-function of modular forms on $G_2$}
\author{Fatma \c{C}\.{i}\c{c}ek}
\address{Department of Mathematics and Statistics\\ University of Northern British Columbia\\ Prince George, BC, Canada}
\email{cicek.ftm@gmail.com}
\author{Giuliana Davidoff}
\address{Department of Mathematics and Statistics\\Mount Holyoke College\\South Hadley, MA, USA}
\email{gdavidof@mtholyoke.edu}
\author{Sarah Dijols}
\address{Department of Mathematics\\ University of British Columbia\\ Vancouver, BC, Canada}

\email{sarah.dijols@hotmail.fr}
\author{Trajan Hammonds}
\address{Department of Mathematics\\ Princeton University \\ Princeton, NJ, USA}
\email{trajanh@princeton.edu}
\author{Aaron Pollack}
\address{Department of Mathematics\\ University of California San Diego\\ La Jolla, CA, USA}
\email{apollack@ucsd.edu}
\thanks{A. Pollack has been supported by the Simons Foundation via Collaboration Grant number 585147 and by the NSF via award number 2101888.}
\author{Manami Roy}
\address{Department of Mathematics\\ Fordham University \\ Bronx, NY, USA}
\email{mroy17@fordham.edu}
\subjclass{11F70, 11F66, 11S40, 11F30}
\keywords{Rankin-Selberg integral, functional equation, modular forms on $G_2$, trivial zeroes}

\begin{abstract}
The goal of this paper is to provide a complete and refined study of the standard $L$-functions $L(\pi,\operatorname{Std},s)$ for certain non-generic cuspidal automorphic representations $\pi$ of $G_2(\A)$. For a cuspidal automorphic representation $\pi$ of $G_2(\A)$ that corresponds to a modular form $\varphi$ of level one and of even weight on $G_2$, we explicitly define the completed standard $L$-function, $\Lambda(\pi,\operatorname{Std},s)$. Assuming that a certain Fourier coefficient of $\varphi$ is nonzero, we prove the functional equation $\Lambda(\pi,\operatorname{Std},s) = \Lambda(\pi,\operatorname{Std},1-s)$. Our proof proceeds via a careful analysis of a Rankin-Selberg integral that is due to an earlier work of Gurevich and Segal.
\end{abstract}

\maketitle

\setcounter{tocdepth}{1}
\tableofcontents


\section{Introduction}
\subsection{History}
Let $G_2$ denote the split exceptional linear algebraic group over $\Q$ of Dynkin type $G_2$, and suppose that $\pi$ is a cuspidal automorphic representation of $G_2(\A)$. The study of $L$-functions associated to such representations has a substantial history. Piatetski--Shapiro, Rallis and Schiffmann \cite{PSRallisSchiffmann} were the first to study such an $L$-function by constructing a Rankin-Selberg integral for the tensor product $L$-function of $\pi$ and a cuspidal automorphic representation on $\GL_2.$ Their result applies to the $\pi$ that are globally generic, that is, those $\pi$ that admit a nonvanishing Whittaker coefficient. Later Ginzburg \cite{Ginzburg1993} proved that for generic $\pi$, the partial standard $L$-function $L^S(\pi,\operatorname{Std},s)$ has a meromorphic continuation with at most a simple pole by constructing an appropriate Rankin-Selberg integral.

For cuspidal representations $\pi$ that are not necessarily generic, the corresponding Rankin-Selberg integrals were constructed in the works of Ginzburg and Hundley \cite{GinzburgHundley2015}, Gurevich and Segal \cite{GurevichSegal} and then Segal \cite{Segal2017}. It was proven in \cite{Segal2017} that the partial standard $L$-function of such a representation $\pi$ admits a meromorphic continuation to the complex plane. However, bounding the poles of the $L$-function $L^S(\pi,\operatorname{Std},s)$ in a left half-plane, and proving a functional equation relating its values at $s$ to its values at $(1-s)$ are difficult problems. This is, in part, due to the difficulty of analyzing local $L$-functions and local zeta integrals at the ramified finite places and at the archimedean place.


\subsection{Statements of results}
Modular forms on $G_2$ were introduced by Gan, Gross and Savin in \cite{GanGrossSavin}. Briefly, these are automorphic forms on $G_2(\A)$ that correspond to representations in the form $\pi = \pi_f \otimes \pi_\infty$ where $\pi_f$ denotes a representation that is unramified at every finite place and $\pi_\infty$ is a certain quaternionic discrete series representation of $G_2(\R).$ Let $K$ denote a maximal compact subgroup of $G_2(\R)$, so that $K \simeq (\SU(2) \times \SU(2))/\{\pm 1\}$ with the first copy of $\SU(2)$ being the long root and the second being the short root. Then for $\ell \geq 2$, there is a discrete series representation $\pi_{\ell,\infty}$ of $G_2(\R)$ whose minimal $K$-type is $\textrm{Sym}^{2\ell}(\C^2) \boxtimes \mathbf{1}$ as a representation of $\SU(2) \times \SU(2)$. Such representations $\pi_{\ell,\infty}$ are not generic.

Let $\ell \geq 2$ be an even integer. We define the archimedean $L$-factor as
\begin{equation}\label{defn: arch factor}
L_\infty(\pi_{\ell,\infty},s) = \Gamma_\C(s+\ell-1) \Gamma_\C(s+\ell)\Gamma_\C(s+2\ell-1)\Gamma_\R(s+1).
\end{equation}
Here
\[
\Gamma_\R(s) = \pi^{-s/2}\Gamma(s/2) \quad \text{and}  \quad \Gamma_\C(s) = 2 (2\pi)^{-s}\Gamma(s),
\]
where $\Gamma$ is the usual gamma function. It is worthwhile to point out that Gross and Savin \cite[p. 168]{GrossSavin} had previously defined the archimedean $L$-factor for representations of the compact group $G_2^{\textrm{ c}}(\R)$. We easily see that our archimedean $L$-factor agrees with theirs by setting $k_1=0$ and $k_2 = \ell-2$ in their notation.

For such representations $\pi = \pi_f \otimes \pi_{\ell,\infty}$, an $L$-function $L(\pi,\operatorname{Std},s)$ is defined. Then the completed $L$-function is given by
\[
\Lambda(\pi,\operatorname{Std},s) = L_\infty(\pi_{\ell,\infty},s)L(\pi,\operatorname{Std},s),
\]
where $L_\infty(\pi_{\ell,\infty},s)$ is as given in \eqref{defn: arch factor}.
Our main results concern these $L$-functions.

Further, we recall that such a representation $\pi$ has an associated cuspidal modular form $\varphi_{\pi}$ on $G_2$ that has weight $\ell$ and level one. It was proved in \cite{GanGrossSavin} that the Fourier coefficients of such a modular form $\varphi_{\pi}$ depend on a cubic ring $T$ such that $T \otimes \R \simeq \R \times \R \times \R$. We thus denote them by $a_{\varphi_{\pi}}(T)$.

Our first result is as follows.

\begin{theorem} \label{thm:intro}
Suppose that $\varphi$ is a level one cuspidal modular form on $G_2$ of positive even weight $\ell$ that generates the cuspidal automorphic representation $\pi.$ Further, assume that the Fourier coefficient of $\varphi$ corresponding to the split cubic ring $\Z \times \Z \times \Z$ is nonzero. Then
$$
\Lambda(\pi,\operatorname{Std},s) = \Lambda(\pi,\operatorname{Std},1-s)
$$
for all $s\in \mathbb C$.
\end{theorem}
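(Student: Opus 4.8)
The plan is to exploit the Rankin--Selberg integral of Gurevich and Segal. This is a non-unique-model (``new way'') construction in which a degenerate Eisenstein series $E(g,f_s,s)$, built from a section $f_s$ of a degenerate principal series whose inducing datum is a character --- so that its normalization involves only completed Riemann zeta functions --- is integrated against the cusp form $\varphi$; by the work of Segal, the resulting global zeta integral $Z(s,\varphi,f_s)$ represents $L(\pi,\operatorname{Std},s)$ up to explicit normalizing factors, and the symmetry $s\leftrightarrow 1-s$ will come from the functional equation of $E$. The first step is to carry out the basic identity: expanding the Fourier coefficients of $E$ along the Heisenberg unipotent, which by Gan--Gross--Savin are indexed by cubic rings $T$ with $T\otimes\R\simeq\R\times\R\times\R$, one finds that $Z(s,\varphi,f_s)$ is proportional to the Fourier coefficient $a_\varphi(\Z\times\Z\times\Z)$ of $\varphi$ attached to the split cubic ring, times a product of local zeta integrals. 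This is where the hypothesis of the theorem is used: the assumption $a_\varphi(\Z\times\Z\times\Z)\neq 0$ ensures that $Z(s,\varphi,f_s)$ is not identically zero, so that a functional equation proved for $Z$ descends to $\Lambda$ rather than being vacuous.

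The second step is the local analysis at the finite places. One factorizes $Z(s,\varphi,f_s)=\prod_v Z_v(s)$, and because $\varphi$ has level one, $\pi_f$ is unramified at \emph{every} finite place; hence at each finite $v$ the local zeta integral on spherical data equals $L_v(\pi_v,\operatorname{Std},s)$ divided by the local Euler factors of the completed zeta functions normalizing the constant term of $E$ (these arise from the Gindikin--Karpelevich formula and are incorporated into the integral), with no extraneous elementary factor once the measures are fixed. Thus the only place requiring genuine work is $v=\infty$.

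The archimedean computation is the heart of the argument and the step I expect to be the main obstacle. Here $\pi_\infty=\pi_{\ell,\infty}$ is the quaternionic discrete series with minimal $K$-type $\mathrm{Sym}^{2\ell}(\C^2)\boxtimes\mathbf{1}$, and one must evaluate $Z_\infty(s)$ on the distinguished vector attached to this minimal $K$-type, against the correctly chosen archimedean section of the degenerate principal series. I would use an explicit model for $\pi_{\ell,\infty}$ --- via its $K$-type structure together with the explicit generalized Whittaker functions occurring in the Fourier expansion of quaternionic modular forms --- to reduce $Z_\infty(s)$ to a concrete integral of classical special functions (Bessel functions, $\Gamma$-integrals and exponentials), and then to evaluate it in closed form. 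The objective is to show that $Z_\infty(s)$ equals $L_\infty(\pi_{\ell,\infty},s)$ as in \eqref{defn: arch factor}, divided by the archimedean part of the normalizing zeta factors, up to an explicit elementary factor (a constant times an exponential in $s$). Pinning down the precise shape $\Gamma_\C(s+\ell-1)\Gamma_\C(s+\ell)\Gamma_\C(s+2\ell-1)\Gamma_\R(s+1)$ --- three complex and one real $\Gamma$-factor, of total degree $7$, reflecting the decomposition of the $7$-dimensional standard representation --- is the delicate point, and it is here that the parity of $\ell$ intervenes, through the single $\Gamma_\R$-factor; the even-weight case is the one we treat.

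Finally I would assemble the pieces. The functional equation $E(g,f_s,s)=E(g,M(s)f_s,1-s)$ of the degenerate Eisenstein series gives $Z(s,\varphi,f_s)=Z(1-s,\varphi,M(s)f_s)$. Into this identity I feed the unramified evaluations at the finite places, the archimedean evaluation, and the computation of the global intertwining operator $M(s)$ as a product of local Gindikin--Karpelevich scalars whose product equals a ratio of completed zeta functions; combining these, the ratios of completed zeta functions cancel (using their own functional equations $w\mapsto 1-w$) and the elementary archimedean factor cancels against the finite-place contributions, so that what remains is $\Lambda(\pi,\operatorname{Std},s)=\Lambda(\pi,\operatorname{Std},1-s)$ up to the nonzero common factor contributed by $Z$, which the nonvanishing hypothesis lets us remove. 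All of the above is carried out as an identity of meromorphic functions --- first in the region of absolute convergence of $Z$, then everywhere by the meromorphic continuation of $L(\pi,\operatorname{Std},s)$, hence of $\Lambda(\pi,\operatorname{Std},s)$, already established by Segal --- which yields the claim for all $s\in\C$.
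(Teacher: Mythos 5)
Your overall strategy is the same as the paper's: unfold the Gurevich--Segal integral via the non-unique model (``new way'') technique to pull out $a_\varphi(\Z\times\Z\times\Z)$, do the unramified computation at every finite place (level one), evaluate the archimedean zeta integral on the quaternionic vector, and derive the symmetry from the functional equation of the Eisenstein series. The genuine gap is in the last step, in how you propose to get that functional equation. You assert that the global intertwining operator $M(s)$ is ``a product of local Gindikin--Karpelevich scalars whose product equals a ratio of completed zeta functions,'' and correspondingly that the Eisenstein series can be normalized by completed zeta functions alone. That is only true for a section that is spherical at every place. Here the section cannot be spherical at infinity: since $\varphi$ is $\mathbf{V}_\ell$-valued and $\pi_\infty$ is quaternionic (non-generic), the pairing $\{\varphi(g),E(g,s)\}_K$ is nonzero only if the Eisenstein series is built from the $K_\infty$-equivariant, $\operatorname{Sym}^{2\ell}(\C^2)$-valued section of weight $\ell$ --- the same section whose archimedean zeta integral you plan to compute. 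For such a non-spherical archimedean vector, the local intertwining operator at $\infty$ is not a Gindikin--Karpelevich scalar; its effect on the relevant $K_\infty$-type vector (the vector $x^\ell y^\ell$) contributes additional $\ell$-dependent ratios of Gamma functions, and these are exactly what force the extra factor $\frac{\Gamma(s+\ell-1)\Gamma(s+\ell-2)}{\Gamma(s-1)\Gamma(s-2)}$ in the normalization $E^*_\ell(g,s)$ and what make the clean symmetry $E^*_\ell(g,s)=E^*_\ell(g,5-s)$ true. In the paper this is the content of Section \ref{sec:Eis}: the long intertwiner is decomposed into nine rank-one operators, each acting on $V_{\operatorname{even}}$ by an explicit diagonal ``Pochhammer'' operator (Lemma \ref{lem:SL2inter}), and the composite polynomial intertwiner is shown to have $x^\ell y^\ell$ as an eigenvector with eigenvalue $c_{\operatorname{poly},\ell}(s)$ (Proposition \ref{prop:Polyintertwine}), which requires a nontrivial generating-function identity (Lemmas \ref{lem:Fuv} and \ref{lem:c's}).

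Without this archimedean intertwining computation, your assembly step cannot be carried out: the cancellations you invoke between ``ratios of completed zeta functions'' and the ``elementary archimedean factor'' do not close up, because the functional equation of $E$ carries an unevaluated $\ell$-dependent Gamma ratio, and it is precisely this ratio, combined with your archimedean zeta integral, that produces the specific completed factor $L_\infty(\pi_{\ell,\infty},s)$ of \eqref{defn: arch factor} and hence the exact statement $\Lambda(\pi,\operatorname{Std},s)=\Lambda(\pi,\operatorname{Std},1-s)$. So the missing ingredient is a computation of the archimedean intertwining operator on the weight-$\ell$ inducing section (or an equivalent determination of the archimedean $\epsilon$- and normalizing factors); everything else in your outline tracks the paper's argument.
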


We carefully note that at present, it is not known whether there exists such a level one, even weight cuspidal modular form $\varphi$ with a nonzero $\Z \times \Z \times \Z$ Fourier coefficient. However, it is certainly expected (by analogy with Siegel modular forms of genus two) that these modular forms exist in abundance.  We also note that the recent work \cite{dalal} provides a dimension formula for the space of level one, cuspidal modular forms on $G_2$ of weight at least three. However, \cite{dalal} sheds no light about the existence of specific nonzero Fourier coefficients of such forms, as is needed in Theorem \ref{thm:intro}.

The proof of Theorem \ref{thm:intro} is based on a refined analysis of a Rankin-Selberg integral that was defined in \cite{GurevichSegal}. Moreover, a Dirichlet series for the $L$-function $L(\pi,\operatorname{Std},s)$ follows from the proof of Theorem \ref{thm:intro}. We also have

\begin{corollary}\label{cor:introDS}
Let the assumptions be as in Theorem \ref{thm:intro}, and let $a_\varphi(T)$ denote the Fourier coefficient of $\varphi$ corresponding to the cubic ring $T$ that satisfies $T \otimes \R \simeq \R \times \R \times \R.$ Then
	\[
	\sum_{T \subseteq \Z^3, n \geq 1}{\frac{a_\varphi(\Z + n T)}{[\Z^3:T]^{s-\ell+1}n^{s}}} = a_{\varphi}(\Z^3) \frac{L(\pi,\operatorname{Std},s-2\ell+1)}{\zeta(s-2\ell+2)^2\zeta(2s-4\ell+2)}.
	\]
Here the sum is over the subrings $T$ of $\Z \times \Z \times \Z$ and integers $n \geq 1$.
\end{corollary}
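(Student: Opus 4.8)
The plan is to extract the identity from the unfolding of the Gurevich--Segal Rankin--Selberg integral that underlies the proof of Theorem~\ref{thm:intro}. Write $\mathcal{I}(\varphi,s)=\int_{G_2(\Q)\backslash G_2(\A)}\varphi(g)\,E(g,s)\,dg$ for the global integral of \cite{GurevichSegal}, where $E(\cdot,s)$ is a suitably normalized degenerate Eisenstein series on $G_2(\A)$ induced from a maximal parabolic subgroup. Unfolding $E(\cdot,s)$ against the cusp form $\varphi$ will express $\mathcal{I}(\varphi,s)$ as the archimedean zeta integral --- which, with the normalization \eqref{defn: arch factor}, is $L_\infty(\pi_{\ell,\infty},s-2\ell+1)$ --- times the Dirichlet series $D(s):=\sum_{T\subseteq\Z^3,\,n\geq1}a_\varphi(\Z+nT)\,[\Z^3:T]^{-(s-\ell+1)}n^{-s}$ appearing on the left of the corollary. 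The proof of Theorem~\ref{thm:intro} then evaluates $D(s)$, through the unramified local computation (the ``basic identity'') at each finite place together with the resulting Euler product, as $a_\varphi(\Z^3)\,L(\pi,\operatorname{Std},s-2\ell+1)\big/\bigl(\zeta(s-2\ell+2)^2\zeta(2s-4\ell+2)\bigr)$, which is exactly the assertion of the corollary; here the shift by $2\ell-1$ is the discrepancy between the Eisenstein parameter and the normalization in \eqref{defn: arch factor}, reflected also in the term $\Gamma_\C(s+2\ell-1)$ there. Thus the corollary is read off directly from that analysis, and the point is to make the unfolding explicit enough that the left side takes precisely the stated shape.

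So the work is in the unfolding. I would unfold $E(\cdot,s)$ against $\varphi$: cuspidality kills all but the generic terms, and the inner integral over $N(\Q)\backslash N(\A)$ becomes a Fourier coefficient of $\varphi$ along the Heisenberg parabolic $P=MN$ of $G_2$ ($N$ the Heisenberg group, $M\simeq\GL_2$) against a distinguished character $\psi_0$. Under the Gan--Gross--Savin parametrization \cite{GanGrossSavin}, $\psi_0$ corresponds to the split cubic algebra $\Q\times\Q\times\Q$ --- which is why only subrings of $\Z^3$ occur --- the coefficient depends only on the cubic ring attached to the translated character, and for the quaternionic archimedean vector $\pi_{\ell,\infty}$ it is supported exactly on cubic rings $T$ with $T\otimes\R\simeq\R^3$. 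Next I would apply the Iwasawa decomposition on $M(\A)=\GL_2(\A)$ together with strong approximation and the level-one hypothesis (so $\varphi$ is right $G_2(\widehat{\Z})$-invariant), reducing the remaining adelic integral to a sum over the $\GL_2(\widehat{\Z})$-orbits inside the rational orbit of $\psi_0$. By the Delone--Faddeev / Gan--Gross--Savin dictionary between integral binary cubic forms and cubic rings, together with the scaling relation $R_{nf}=\Z+n\,R_f$ for binary cubic forms, these orbits are indexed by pairs $(T,n)$ with $T$ a finite-index subring of $\Z^3$ and $n\geq1$, and the corresponding summand is $a_\varphi(\Z+nT)$ times the value on the relevant coset of the inducing section, twisted by the torus character through which the weight-$\ell$ vector transforms. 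The archimedean integral is a ratio of $\Gamma$-functions, equal by \eqref{defn: arch factor} to $L_\infty(\pi_{\ell,\infty},s-2\ell+1)$, and the finite-place section values come out to be precisely $[\Z^3:T]^{-(s-\ell+1)}n^{-s}$, the two exponents combining the Eisenstein parameter with the weight twist.

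The step I expect to be the main obstacle is this last bookkeeping: identifying the coset decomposition of $T_{\psi_0}(\A)N(\A)\backslash G_2(\A)$ with the explicit family $\{\Z+nT : T\subseteq\Z^3,\ n\geq1\}$ and checking that the section takes exactly the stated values, with all normalizations tracked --- the genuinely hard analytic input, namely the unramified computation relating the local zeta integral to $L_p(\pi_p,\operatorname{Std},\cdot)$ and to the local zeta factors, having already been supplied in the proof of Theorem~\ref{thm:intro}. Finally, to license the term-by-term manipulations I would record the absolute convergence of $D(s)$ for $\operatorname{Re}(s)$ large, which follows from the polynomial bound on the Fourier coefficients $a_\varphi(T)$ (a standard consequence of the moderate growth of the cuspidal form $\varphi$) together with the absolute convergence, in a right half-plane, of the Euler product defining $L(\pi,\operatorname{Std},s)$.
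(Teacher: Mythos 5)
Your overall plan---unfold the global integral, apply the Iwasawa decomposition at the finite places together with the binary-cubic/cubic-ring dictionary to turn the finite part into the Dirichlet series, then evaluate it by the unramified local identity---is exactly the route the paper takes (its proof of the corollary is a citation of Theorem~\ref{thm:Ip} plus the bookkeeping of Section~5.9 of \cite{PollackG2}), so what you propose is not a different method. The genuine problem is your first step. The Eisenstein series in the Gurevich--Segal integral is \emph{not} a degenerate Eisenstein series on $G_2(\A)$: it lives on the larger group $G=\Spin(\Theta)$, induced from the Heisenberg parabolic $P_G$ of $G$, and is only restricted to $G_2(\A)\subset G(\A)$ inside the pairing. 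With the setup you describe the integral would vanish identically: a section of $\mathrm{Ind}_{P(\A)}^{G_2(\A)}(|\nu|^{s})$ is left $N(\A)$-invariant, so unfolding $\int_{G_2(\Q)\backslash G_2(\A)}\varphi\,E$ produces the constant term $\varphi_N$, which is zero by cuspidality; in a degenerate same-group unfolding there are no ``generic terms'' for cuspidality to single out, and no mechanism to produce your character $\psi_0$. In the actual construction the character $\chi$ attached to $wz(w+z)$ (hence to $\Z^3$) arises from the $G_2(\Q)$-orbit analysis on $P_G(\Q)\backslash G(\Q)$: the negligible orbits die by cuspidality, and the open orbit, with representative $\gamma_0$ satisfying $\gamma_0^{-1}E_{13}=\widetilde{v_E}$ and stabilizer accounted for by $N^{0,E}$, yields $\int_{N^{0,E}(\A)\backslash G_2(\A)}\{f(\gamma_0 g,\Phi_f,s),\varphi_\chi(g)\}_K\,dg$. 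This is the unfolding theorem in Section~\ref{RSI}, which you would need to invoke (or reprove) in place of the mechanism you sketch.

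Two smaller cautions. First, even after the correct unfolding, passing from the coset sum to an Euler product is not a formal factorization: the $(N,\chi)$-model is not unique, so one needs the local identity in precisely the form of Theorem~\ref{thm:Ip}, valid for an \emph{arbitrary} $(N,\chi)$-functional $\mathcal{L}$, applied place by place (the ``new way''); your phrase ``the resulting Euler product'' glosses over the very point that makes this a non-unique-model integral. Second, the corollary does not require the archimedean evaluation at all---the paper's derivation works purely with the finite part, Theorem~\ref{thm:Ip}, and the Fourier-coefficient dictionary---whereas your route through the full global integral additionally requires knowing that the common archimedean factor is not identically zero (Theorem~\ref{thm:I*int}). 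The bookkeeping you defer (cosets $\leftrightarrow$ pairs $(T,n)$, the exponents $[\Z^3:T]^{s-\ell+1}n^{s}$, absolute convergence for $\operatorname{Re}(s)$ large) is exactly the content of Section~5.9 of \cite{PollackG2}, which is what the paper cites; with the first step corrected, your proposal collapses onto the paper's proof rather than offering an alternative.
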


In~\cite{PollackG2}, Pollack gave a streamlined account of the Rankin-Selberg integrals in \cite{GurevichSegal} and \cite{Segal2017} whereby simplifying some of their computations. He used his analysis of the Rankin-Selberg integral to provide a Dirichlet series representation for the standard $L$-function of modular forms on $G_2$ outside the primes $p=2$ and $p=3$, and began some calculations of the archimedean zeta integral associated to the global Rankin-Selberg convolution. Thus, Theorem \ref{thm:intro} and Corollary \ref{cor:introDS} bring the work that began in \cite{PollackG2} to completion.


Observe that by studying the archimedean factor $L_\infty(\pi_{\ell,\infty},s),$ one can verify that the integers $1,3,5,\ldots,\ell-1$ are critical for $L(\pi,\operatorname{Std},s)$ in the sense of Deligne, that is, both of the values $L_\infty(\pi_{\ell,\infty},s)$ and $L_\infty(\pi_{\ell,\infty},1-s)$ are finite at these integers. It would be extremely interesting to obtain a special value result in the direction of Deligne's conjecture for these $L$-values. While such a result is beyond the reach of our methods, we can obtain a result on what can be considered the most basic special value, namely, a result on the trivial zeros of the $L$-function $L(\pi,\operatorname{Std},s)$. This is an immediate corollary of the functional equation of the completed $L$-function. In more detail, the completed $L$-function $\Lambda(\pi,\operatorname{Std},s)$ is finite and nonzero for Re($s)\gg 0$, and also for Re($s)\ll 0$ by using the functional equation. However, the archimedean factor $L_\infty(\pi_{\ell,\infty},s)$ has poles at negative integers of sufficiently large absolute value. These poles are compensated for by the zeros of the standard $L$-function. We therefore deduce the following from Theorem \ref{thm:intro}.

\begin{corollary}
Let the assumptions be as in Theorem \ref{thm:intro}. Then $L(\pi,\operatorname{Std},s)$ vanishes to order $3$ at negative even integers of sufficiently large absolute value, and vanishes to order $4$ at negative odd integers of sufficiently large absolute value.
\end{corollary}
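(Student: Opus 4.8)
The plan is to deduce the corollary directly from the functional equation of Theorem~\ref{thm:intro}, by comparing the poles and zeros of $\Lambda(\pi,\operatorname{Std},s)$ with those of the explicit archimedean factor $L_\infty(\pi_{\ell,\infty},s)$ of \eqref{defn: arch factor}.

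First I would record that $\Lambda(\pi,\operatorname{Std},s)$ is holomorphic and nonvanishing for $\mathrm{Re}(s)\gg 0$, and hence, by the functional equation $\Lambda(\pi,\operatorname{Std},s)=\Lambda(\pi,\operatorname{Std},1-s)$, also for $\mathrm{Re}(s)\ll 0$. Indeed, since $\pi_f$ is unramified at every finite place, $L(\pi,\operatorname{Std},s)$ is an absolutely convergent degree-seven Euler product in some right half-plane (one could alternatively invoke the Dirichlet series of Corollary~\ref{cor:introDS}), so it is holomorphic there and tends to $1$ as $\mathrm{Re}(s)\to+\infty$, in particular it is nonvanishing far to the right; and each $\Gamma_\C$ and $\Gamma_\R$ factor appearing in \eqref{defn: arch factor} is holomorphic and nonvanishing for $\mathrm{Re}(s)$ large. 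Multiplying gives the claim for $\mathrm{Re}(s)\gg 0$.

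Next I would determine the pole of $L_\infty(\pi_{\ell,\infty},s)$ at each sufficiently negative integer. Recall that $\Gamma_\C(z)=2(2\pi)^{-z}\Gamma(z)$ has a simple pole at every $z\in\Z_{\le 0}$ and is holomorphic and nonvanishing elsewhere, while $\Gamma_\R(z)=\pi^{-z/2}\Gamma(z/2)$ has a simple pole at every even $z\in\Z_{\le 0}$ and is holomorphic and nonvanishing elsewhere. Consequently, for an integer $s$ with $s\le 1-2\ell$ the three factors $\Gamma_\C(s+\ell-1)$, $\Gamma_\C(s+\ell)$, $\Gamma_\C(s+2\ell-1)$ each contribute exactly one simple pole, and $\Gamma_\R(s+1)$ contributes one further simple pole precisely when $s$ is odd. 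Thus $L_\infty(\pi_{\ell,\infty},s)$ has a pole of order exactly $3$ at every even integer $s\le 1-2\ell$, and a pole of order exactly $4$ at every odd integer $s\le 1-2\ell$.

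It remains to combine the two steps. For a negative integer $s_0$ with $|s_0|\ge 2\ell-1$, write $L_\infty(\pi_{\ell,\infty},s)=u(s)\,(s-s_0)^{-k}$ near $s_0$ with $u$ holomorphic, $u(s_0)\neq 0$, and $k=3$ (resp.\ $k=4$) according as $s_0$ is even (resp.\ odd). From $\Lambda(\pi,\operatorname{Std},s)=L_\infty(\pi_{\ell,\infty},s)\,L(\pi,\operatorname{Std},s)$ we get $L(\pi,\operatorname{Std},s)=\Lambda(\pi,\operatorname{Std},s)\,(s-s_0)^{k}/u(s)$ near $s_0$; since $\Lambda(\pi,\operatorname{Std},s_0)$ is finite and nonzero and $u(s_0)\neq 0$, the right-hand side is a nonzero holomorphic function at $s_0$ times $(s-s_0)^k$, so $L(\pi,\operatorname{Std},s)$ vanishes to order exactly $k$ at $s_0$. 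This gives the stated orders $3$ and $4$. There is no real obstacle in this argument beyond the input of Theorem~\ref{thm:intro}; the only point requiring care is the nonvanishing (not merely the finiteness) of $\Lambda(\pi,\operatorname{Std},s)$ in the left half-plane, and it is exactly this that promotes a lower bound on the order of vanishing to an equality.
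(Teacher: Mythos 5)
Your argument is correct and is essentially the paper's own: the paper deduces the corollary from the functional equation of Theorem \ref{thm:intro} together with the finiteness and nonvanishing of $\Lambda(\pi,\operatorname{Std},s)$ for $\mathrm{Re}(s)\ll 0$ and the explicit pole count (order $3$ at even, order $4$ at odd negative integers) of the archimedean factor in \eqref{defn: arch factor}. Your write-up just makes explicit the same pole/zero bookkeeping that the paper sketches in the paragraph preceding the corollary.
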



\subsection{Outline of the proof of Theorem \ref{thm:intro}}
As mentioned, our proof of Theorem \ref{thm:intro} is based on a refined analysis of the Rankin-Selberg integral in \cite{GurevichSegal} and is a continuation of the work in \cite{PollackG2}.

Let $G$ denote the split group $\Spin(8)$. If $\varphi$ is a modular form on $G_2$ of weight $\ell$, then by definition, $\varphi$ is a $\mathbf{V}_{\ell}$-valued automorphic function on $G_2(\A)$, where $\mathbf{V}_{\ell} = \textrm{Sym}^{2\ell}(\C^2)$ (see \cite{PollackG2} for a more detailed account of modular forms on $G_2$).  For a normalized Eisenstein series $E_{\ell}^*(g,s)$ on $G(\A)$ that takes values in $\mathbf{V}_{\ell}$, we will consider the Rankin-Selberg integral
\[
I_\ell(\varphi,s) = \int_{G_2(\Q)\backslash G_2(\A)}{\{\varphi(g),E_{\ell}^*(g,s)\}_K\,dg}.
\]
Here $\{\cdot, \cdot\}_K: \mathbf{V}_{\ell} \otimes \mathbf{V}_{\ell} \rightarrow \C$ is a $K$-equivariant pairing. In order to obtain Theorem \ref{thm:intro}, we will prove that
	\[
	I_\ell(\varphi,s)  =a_{\varphi}(\Z^3) \Lambda(\pi,\operatorname{Std},s-2)
	\]
up to a nonzero constant, and that the Eisenstein series $E_\ell^*(g,s)$ satisfies the functional equation
\[
E_{\ell}^*(g,s) = E_{\ell}^*(g,5-s).
\]

For the proof of the first statement, we will analyze local integrals $I_p(s)$ for finite primes $p$, which will be defined in \eqref{Ips}, and an archimedean integral $I^*(s;\ell)$ defined in \eqref{eq:norm zeta integral}. We will prove that these local integrals are equal to the corresponding local $L$-factors up to some simple factors. For $p \geq 5$, the local integrals $I_p(s)$ were analyzed in \cite{GurevichSegal}. To carry out the computation of these integrals for $p =2,3$, we follow a method in \cite{PollackG2} and use some results on cubic rings. The analysis of the integral $I^*(s;\ell)$ was begun in \cite{PollackG2}, where the computation was reduced to that of an integral $J'(s)$ over the space of real binary cubics of a general form that was previously considered by Shintani \cite{Shintani}. We will evaluate the integral $J'(s)$ explicitly in terms of the gamma function, thereby proving that $I^*(s;\ell) = L(\pi_{\ell,\infty},s-2)$ up to a nonzero constant.

To prove the functional equation for $E_{\ell}^*(g,s)$, we will use Langlands' functional equation for the Eisenstein series. Since our Eisenstein series $E_{\ell}^*(g,s)$ is not spherical at the archimedean place, we will make a careful analysis of certain archimedean intertwining operators.

\begin{remark}
The methods in this paper are somewhat flexible but also have some limitations.

For instance, in terms of the calculations of the unramified integrals at the finite places, our restriction to $\mathbb{Q}$ is simply for convenience. These calculations would follow just as easily over other ground fields. However, where we really use our assumption that $\mathbb{Q}$ is the ground field is in the archimedean calculation. Indeed, we do not expect that the calculations in Sections \ref{sec:Eis} and \ref{sec:archInt} will have close analogues for other number fields.

  Also, notice that we have several assumptions on our modular forms on $G_2.$ Firstly, we only consider modular forms of level one. This allows us to do an unramified computation at \emph{every} finite place. Without this assumption, it would also be difficult to obtain a precise functional equation for an Eisenstein series that is used in the Rankin-Selberg integral. We also restrict our discussion to quaternionic modular forms due to reasons that can be considered to be  \emph{purely archimedean}. For such modular forms, we can define a completed $L$-function, prove its relation to the global Rankin-Selberg convolution, and prove its functional equation. This archimedean assumption does not make the unramified computations any easier or different. However, by working with quaternionic modular forms, we are able to compute the archimedean integral as in Section \ref{sec:archInt}. We would not expect to be able to do an analogous computation for non-spherical, non-quaternionic automorphic forms.

    Finally, note that we only consider the $\Z \times \Z \times \Z$ Fourier coefficient of quaternionic modular forms. This assumption is used to make the unramified calculation as simple as possible at \emph{every} finite place.  In particular, replacing $\Z \times \Z \times \Z$ with another maximal totally real cubic ring, we would still expect to be able to do the resulting archimedean calculation. Importantly, Theorem \ref{thm:intro} and its corollaries require that our modular form supports a nonzero Fourier coefficient corresponding to the split cubic ring $\Z \times \Z \times \Z$.
 \end{remark}


\subsection{Organization of the paper}
We now provide an outline of our paper. In Section \ref{sec:Setups}, we setup some notation. Then in the next section, we give an overview of the Rankin-Selberg integral and present our strategy to calculate the non-archimedean local integrals. In Section \ref{Sect. binary cubic}, we prove some results that relate some cubic rings to some binary cubic forms. In Section \ref{sec:ABF}, we compute the Fourier coefficient of the so-called approximate basic function. In Section \ref{sec:non-arch}, we complete the computation for the case of unramified primes. This involves computing the function  $\Phi_{p,\chi}(t,g)$, which is defined in \eqref{Phichi} and is related to the inducing section of the Eisenstein series, and some calculations with certain Hecke operators. In Section \ref{sec:Eis}, we prove the functional equation of the Eisenstein series $E_\ell^*(g,s)$ and then in Section \ref{sec:archInt}, we compute the archimedean zeta integral $I^*(s;\ell)$. Finally in Section 9, we combine our work and complete the proof of our results.


\subsection{Acknowledgments}
This paper is an outgrowth of the workshop Rethinking Number Theory: 2020 that was organized by Heidi Goodson, Christelle Vincent and Mckenzie West. The authors extend their sincere thanks to the organizers of the workshop, without which this paper would not have been written. We also thank the anonymous referee for reading our manuscript very carefully and providing many valuable comments and suggestions.


\section{Setups}\label{sec:Setups}
\subsection{Octonions and reductive groups}
It is well-known that $G_2$ is defined as the automorphism group of an octonion algebra. In this section, we use the split octonions algebra $\Theta$ in the Zorn model (see \cite[Section 2.1]{PollackG2}) to view $G_2$ as a subgroup of Spin$(\Theta)$. We thus begin with a review of some of the notation used in \cite{PollackG2}. The standard representation $V_3$ of $\SL_3$ and its dual representation  $V_3^\vee$ will be fixed. The space $V_3$ has a standard basis $\{e_1, e_2, e_3\}$ and $V_3^\vee$ has the dual basis $\{e_1^*, e_2^*, e_3^*\}$.  Note that for each $j=1, 2, 3$, we make the identifications
\[
e_j \in V_3 \leftrightarrow
\bigg(\begin{array}{cc} {0}&{e_j}\\{0}&{0} \end{array}\bigg) \in \Theta \quad
\text{and}
\quad
e_j^* \in V_3^\vee \leftrightarrow \bigg(\begin{array}{cc} {0}&{0}\\ {e_j^*}&{0}\end{array}\bigg) \in \Theta.
\]

Using the quadratic norm on $\Theta$, we can define the group $G'=\SO(\Theta)$. Now, let $G$ denote the algebraic group $\Spin(\Theta)$ defined as
\[
G=\big\{(g_1,g_2,g_3) \in \SO(\Theta)^3 : (g_1 x_1, g_2 x_2, g_3 x_3) = (x_1, x_2,x_3) \,\, \text{ for all }\, x_1,x_2, x_3 \in \Theta\big\},
\]
where $(x_1,x_2,x_3) = \tr_\Theta(x_1 (x_2 x_3))$. We fix a map $\textrm{proj}_1: G \rightarrow G'$ as $(g_1,g_2,g_3) \mapsto g_1$. This map induces an isomorphism on Lie algebras.
\vspace{0,2cm}

Let $\Theta_0$ be the standard maximal lattice in the Zorn model. Then $\Theta_0$ consists of the matrices $\mm{a}{v}{\phi}{d}$ where $a, d \in \Z$, $v$ is in the $\Z$-span of the $\{e_1, e_2, e_3\}$ and $\phi$ is in the $\Z$-span of $\{e_1^*, e_2^*, e_3^*\}$.

We set $K_f = \prod_{p}{K_p}$, where $K_p$ is the hyperspecial maximal compact subgroup of $G(\Q_p)$ that is specified as the stabilizer of $(\Theta_0 \otimes \Z_p)^3$ inside of $G(\Q_p)$. That $K_p$ is hyperspecial follows from \cite[Section 4]{grossGpsZ} and \cite[Proposition 5.4]{Conrad}. Similarly, we let $G_2(\Z_p)$ be the stabilizer of $\Theta_0 \otimes \Z_p$ inside $G_2(\Q_p)$. This is a hyperspecial maximal compact subgroup.
\vspace{0,2cm}

We now define a maximal compact subgroup of $G'(\R)$, $K_\infty'$, as follows. Given $v \in V_3$, suppose that $\widetilde{v} \in V_3^\vee$ is given by the linear mapping $e_j \mapsto e_j^*$ on $V_3$. Similarly, for $\phi \in V_3^\vee$, let $\widetilde{\phi}$ in $V_3$ be given by the linear mapping $e_j^* \mapsto e_j$. We also define a quadratic form $q_{\textrm{maj}}$ on $\Theta \otimes \R$ by
\[
q_{\textrm{maj}}\bigg(
\bigg(
\begin{array}{cc}
a & v \\ \phi & d
\end{array}
\bigg)
\bigg)
= a^2 + d^2 + (v,\widetilde{v}) + (\widetilde{\phi},\phi),
\]
where $(\,,)$ denotes the evaluation pairing between $V_3$ and $V_3^\vee$. Then $K_\infty'$ is defined as the subgroup of $G'(\R)$ that preserves the quadratic form $q_{\textrm{maj}}$. We now let $K_\infty \subseteq G(\R)$ be the inverse image of $K_\infty'$ under the map $\textrm{proj}_1: G \rightarrow G'$.

Put another way, we define $\iota: \Theta \rightarrow \Theta$ as
\[
\iota\bigg(
\bigg(
\begin{array}{cc}
a & v \\ \phi & d
\end{array}
\bigg)
\bigg) =
\bigg(
\begin{array}{cc}
d & -\widetilde{\phi} \\ -\widetilde{v} & a
\end{array}
\bigg).
\]
If $x = \mm{a}{v}{\phi}{d}$, then $q_{\textrm{maj}}(x) = (x, \iota(x))$.  Conjugation by $\iota$ induces a Cartan involution on $G'$ and on $G_2 \subseteq G$ (see \cite[Claim 2.1]{PollackG2}).


\subsection{Lie algebra definitions}
The maps $G_2 \rightarrow G \rightarrow G'$ induce $\textrm{Lie}(G_2) \rightarrow \textrm{Lie}(G') \simeq \wedge^2 \Theta$. This embedding is the one specified in Section 2.2 in \cite{PollackG2}, and we will use notation from that section.

The Heisenberg parabolic $P_{G}$ of $G$ is defined to be the one which stabilizes the line spanned by $E_{13} = e_3^* \wedge e_1$ in $\wedge^2 \Theta$. The Heisenberg parabolic $P$ of $G_2$ is similarly defined as the stabilizer of the line spanned by $E_{13}$ in $\textrm{Lie}(G_2)$, thus $P_{G} \cap G_2 = P$.

\subsection{Setup for $K_\infty$}
Let
\[
K_\infty' = S(O(4)\times O(4)) = \big\{(g_1,g_2) \in O(4) \times O(4): \det(g_1)\det(g_2) = 1\big\}.
\]
We remind the reader that $\SO(4) = (\SU(2) \times \SU(2))/\{\pm 1\}$. Thus there are four copies of $\sl_2$ in $\textrm{Lie}(K_\infty) \otimes \C$. We will introduce them explicitly as these $\sl_2$'s will be used in Section \ref{sec:Eis}. Note that $\textrm{Lie}(K_\infty) \subseteq \textrm{Lie}(G') \simeq \wedge^2 \Theta$. Let
\begin{equation}\label{eqn:bE}
\big\{b_1,b_2,b_3,b_4,b_{-4},b_{-3},b_{-2},b_{-1}\big\} = \big\{e_1,e_3^*,\epsilon_2,e_2^*,-e_2,\epsilon_1,-e_3,-e_1^*\big\}
\end{equation}
in order. Here $\epsilon_1 = \mm{1}{0}{0}{0}$ and $\epsilon_2 = \mm{0}{0}{0}{1}$. With the basis $\{b_1,b_2,b_3,b_4,b_{-4},b_{-3},b_{-2},b_{-1}\}$ of $\Theta$, one has $(b_j,b_k) = (b_{-j},b_{-k}) = 0$ and $(b_j, b_{-k}) = \delta_{jk}$. The involution $\iota$ satisfies $\iota(b_j) = b_{-j}$ and $\iota(b_{-j}) = b_j$ for $j =1,2,3,4$. Define
\begin{align*}
u_1 &= \frac{1}{\sqrt{2}}\big(b_1 + b_{-1}\big), \quad u_2 =  \frac{1}{\sqrt{2}}\big(b_2 + b_{-2}\big), \\
v_1 &=  \frac{1}{\sqrt{2}}\big(b_3 + b_{-3}\big), \quad v_2 =  \frac{1}{\sqrt{2}}\big(b_4 + b_{-4}\big),
\end{align*}
and
\begin{align*}
u_{-1} &= \frac{1}{\sqrt{2}}\big(b_1 - b_{-1}\big), \quad u_{-2} =  \frac{1}{\sqrt{2}}\big(b_2 - b_{-2}\big), \\
v_{-1} &=  \frac{1}{\sqrt{2}}\big(b_3 - b_{-3}\big), \quad v_{-2} =  \frac{1}{\sqrt{2}}\big(b_4 - b_{-4}\big). \end{align*}
With the above notation, we now specify the four copies of $\sl_2$ in $\textrm{Lie}(K_\infty) \otimes \C$. One copy of $\sl_2$ has the basis consisting of
\begin{itemize}
\setlength\itemsep{0.3em}
	\item $\displaystyle e^+ = \frac{1}{2}(u_1-iu_2) \wedge (v_1 - iv_2),$
	\item $\displaystyle h^+ = i (u_1 \wedge u_2 + v_1 \wedge v_2),$
	\item $\displaystyle f^+ = -\frac{1}{2} (u_1+iu_2) \wedge (v_1 + iv_2)$.
\end{itemize}
The other $\sl_2$ from the first $\SO(4)$ in $K_\infty' = S(O(4)\times O(4))$ is obtained by replacing $v_2$ with $-v_2$ in the above formulas. Thus it has a basis that consists of
\begin{itemize}
    \setlength\itemsep{0.3em}
	\item $\displaystyle e'^+ = \frac{1}{2}(u_1-iu_2) \wedge (v_1 + iv_2),$
	\item $\displaystyle h'^+ = i (u_1 \wedge u_2 - v_1 \wedge v_2),$
	\item $\displaystyle f'^+ = -\frac{1}{2} (u_1+iu_2) \wedge (v_1 - iv_2)$.
\end{itemize}
The third copy of $\sl_2$ has the basis consisting of
\begin{itemize}
	\setlength\itemsep{0.3em}
	\item $\displaystyle e^- = \frac{1}{2}(u_{-1}-iu_{-2}) \wedge (v_{-1} - iv_{-2}),$
	\item $\displaystyle h^- = -i (u_{-1} \wedge u_{-2} + v_{-1} \wedge v_{-2}),$
	\item $\displaystyle f^- = -\frac{1}{2} (u_{-1}+iu_{-2}) \wedge (v_{-1} + iv_{-2})$.
\end{itemize}
Finally, the basis of the fourth copy of $\sl_2$ consists of
\begin{itemize}
	\setlength\itemsep{0.3em}
	\item $\displaystyle e'^- = \frac{1}{2}(u_{-1}-iu_{-2}) \wedge (v_{-1} + iv_{-2}),$
	\item $\displaystyle h'^- = -i (u_{-1} \wedge u_{-2} - v_{-1} \wedge v_{-2}),$
	\item $\displaystyle f'^- = -\frac{1}{2} (u_{-1}+iu_{-2}) \wedge (v_{-1} - iv_{-2})$.
\end{itemize}
The compatible Cartan involutions on $G_2$ and $G$, and the embedding $G_2 \subseteq G$ picks out a distinguished $\sl_2$ of the above four, the image of the long root $\sl_2$ of $G_2$. The long root $\sl_2$ is given in Section 4.1.1 in
\cite{PollackG2} or equivalently, by combining the discussion in Section 5.1 and Section 4.2.4 of \cite{PollackQDS}. From Section 4.1.1 in
\cite{PollackG2}, we obtain
\[
E = \frac{1}{4} \big(e_1 + e_1^* - i(e_3 + e_3^*)) \wedge (\epsilon_2-\epsilon_1 - i(e_2+e_2^*)\big),
\]

$F = - \overline{E}$ and $H = [E,F]$.

With the identification in \eqref{eqn:bE}, it follows that the long root $\sl_2$ of $G_2$ maps into the third copy of $\sl_2$ in $\textrm{Lie}(K_\infty)\otimes \C$ that was given above.

We denote by $\C^2$ the representation of $\textrm{Lie}(K_\infty) \otimes \C$ which is the two-dimensional representation of the long root $\sl_2$ and the trivial representation of the other $\sl_2$'s. Let $\{x,y\}$ be a basis of $\C^2$ on elements on which $H$ acts as $1,-1$, in order, and for which $F x = y$. The even symmetric powers in $\textrm{Sym}^{2 \ell}(\C^2)$ exponentiate to representations of $K_\infty$ and have the basis $\{x^{2\ell},x^{2 \ell-1}y, \ldots, x y^{2 \ell-1}, y^{2 \ell}\}.$

\subsection{Binary cubic forms} \label{subsec:binary cubics}
Here, we briefly recall some aspects of binary cubic forms, as studied in \cite{PollackG2}, that will be used in the next subsection.

Let $V_2$ denote the defining representation of $\GL_2$. The space
\[
W = \textrm{Sym}^3(V_2) \otimes \det(V_2)^{-1}
\]
is the space of binary cubic forms. If $f(w,z) = aw^3 + bw^2 z + cwz^2 + dz^3$ is a binary cubic and $g \in \GL_2$, then we define $g \cdot f$ to be the binary cubic
\begin{equation}\label{GL2 action}
(g\cdot f)(w,z) = \det(g)^{-1} f((w,z)g).
\end{equation}
Also, we will sometimes use a right action of $\GL_2$ on the space of binary cubics. We define
\[
\widetilde{g} =
\bigg(\begin{array}{cc} s& -q\\ -r & p\end{array}\bigg) \quad \text{for } \, g = \bigg(\begin{array}{cc}  p&q\\ r & s \end{array}\bigg)\in \GL_2,
\]
so that $g \widetilde{g} = \det(g)$. We then define
\[
f \cdot g = \widetilde{g} \cdot f = \det(g)^2 f((w,z)g^{-1}).
\]

There is a $\GL_2$-equivariant symplectic form on $W$ that is defined as
\[
\langle aw^3 + bw^2 z + cwz^2 + dz^3,a'w^3 + b'w^2 z + c'wz^2 + d'z^3\rangle
= ad' - \frac{bc'}{3} + \frac{cb'}{3} - da'.
\]
We have
\[
\langle g \cdot f, g \cdot f'\rangle = \det(g) \langle f, f' \rangle \quad \text{and} \quad \langle f, g \cdot f' \rangle = \langle f \cdot g, f' \rangle \quad \text{ for all }\, f, f' \in W.
\]

There also exists a $\GL_2$-equivariant quartic form on the space $W$. For $v = aw^3 + bw^2 z + cwz^2 + dz^3 \in W$, this is given by
\begin{align*}
q(v) &= \Big(ad - \frac{bc}{3}\Big)^2 + \frac{4}{27} ac^3 + \frac{4}{27}db^3 - \frac{4}{27} b^2 c^2 \\
&= -\frac{1}{27}(-27a^2d^2 + 18abcd+b^2c^2-4ac^3-4db^3).
\end{align*}


\subsection{Characters of the Heisenberg parabolic}\label{subsec:Heis}
Throughout the paper, we fix the standard additive character to be $\psi: \Q\backslash \A \rightarrow \C^\times$. We will abusively denote the $p$-component of this additive character by $\psi$. Thus, if $x \in \Q_p$ and $x = x_0 + x_1$ with $x_0 \in \Z_p$ and $x_1 = m/p^r$, then
\[
\psi(x) = e^{2\pi i x_1}.
\]

We let $N$ denote the unipotent radical of the Heisenberg parabolic $P$ of $G_2$ and let $M$ denote the Levi subgroup of $P$ that also stabilizes the line spanned by $E_{31}$. We identify $M$ with $\GL_2$ as in \cite[Section 5.2]{PollackG2}. We now recall this identification. Suppose that $g \in \GL_2$ is represented by the matrix  $\mm{a}{b}{c}{d}$. Then the action of $g$ on $\Theta$ is given by
\begin{itemize}
	\setlength\itemsep{0.3em}
	\item $e_1 \mapsto a e_1 + c e_3^*$,
	\item $e_3^* \mapsto b e_1 + d e_3^*$,
	\item $(ad-bc) \epsilon_1 \mapsto ad \epsilon_1 + ab e_2^* - cd e_2 -bc \epsilon_2$,
	\item $(ad-bc) e_2^* \mapsto ac \epsilon_1 + a^2 e_2^* - c^2 e_2 -ac \epsilon_2$,
	\item $(ad-bc) e_2 \mapsto -bd \epsilon_1 -b^2 e_2^* +d^2 e_2 +bd \epsilon_2$,
	\item $(ad-bc) \epsilon_2 \mapsto -bc \epsilon_1 -ab e_2^* + cd e_2 + ad \epsilon_2$.
\end{itemize}
On another note, a binary cubic form provides a character of $N$. Let $Z$ denote the one-dimensional center of $N$. Denote by $W$ the representation $\textrm{Sym}^3(V_2) \otimes \det(V_2)^{-1}$ of $M \simeq \GL_2$. The exponential map $\exp: W \to N/Z$ provides an identification $W \simeq N/Z$ as specified in \cite[p. 18]{PollackG2}. Namely, to the binary cubic
\[
u_1 x^3 + u_2 x^2 y + u_3 xy^2 + u_4 y^3 \in W,
\]
it associates the element
\[
u_{1} E_{12} + \frac{u_2}{3} v_1 + \frac{u_3}{3} \delta_3 + u_4 E_{23} \in \, \textrm{Lie}(G_2).
\]
Now, if $\omega \in W$, then $n \mapsto \psi(\langle \omega, \overline{n}\rangle)$ defines a character of $N$. Here $\overline{n}$ is the image of $n$ in $N/Z \simeq W$ and $\psi$ is our fixed additive character.


\section{The Rankin-Selberg integral} \label{RSI}
In this section we provide an overview of the calculations that will be done in the rest of the paper.


\subsection{The Eisenstein series}\label{subsec:Eis}

We begin by defining various Eisenstein series on the group $G$. Recall that $P_G$ denotes the Heisenberg parabolic of $G$. We denote its generating character by $\nu: P_G \rightarrow \GL_1$, so that
\[
\delta_{P_G}(p) = |\nu(p)|^{5}.
\]

Let $E_\ell(g,s)$ be the Eisenstein series of weight $\ell$ on $G$ that is normalized with a flat section. More precisely, if $x$ and $y$ denote the variables in $\operatorname{Sym}^{2\ell}(\C^2)$, we define $f_{\ell}(g,s)$ to be the unique section in $\mathrm{Ind}_{P_{G}(\A)}^{G(\A)}(|\nu|^{s})$, which is valued in $\mathbf{V}_{\ell} = \operatorname{Sym}^{2\ell}(\C^2)$, and satisfies the properties
\begin{enumerate}[label=(\roman*)]
\setlength\itemsep{0.3em}
	\item $f_{\ell}(k_f,s) = x^{\ell} y^{\ell}$ for all $k_f \in K_f \subseteq G(\A_f)$,
	\item $f_{\ell}(gk,s) = k^{-1} \cdot f_{\ell}(g,s)$ for all $g \in G(\A)$ and $k \in K_\infty$.
\end{enumerate}
Note that in this paper we use an unnormalized induction. We have
\[
E_{\ell}(g,s) = \sum_{\gamma \in P_{G}(\Q)\backslash G(\Q)}{f_{\ell}(\gamma g,s)}.
\]

Let $\Lambda(s)$ be the completed Riemann zeta function, that is,
\[
\Lambda(s)=\pi^{-\frac{s}{2}}\Gamma\left(\frac{s}{2}\right)\zeta(s).
\]
For our purposes, we define a normalized Eisenstein series as
\begin{equation}\label{eqn:NEis}
E^*_\ell(g,s) = \Lambda(s-1)^2 \Lambda(s) \Lambda(2s-4) \frac{\Gamma(s+\ell-1)\Gamma(s+\ell-2)}{\Gamma(s-1)\Gamma(s-2)} E_\ell(g,s).
\end{equation}

It will be convenient for our calculations to define another Eisenstein series. Let $\Phi_f$ be the Schwartz-Bruhat function on $\wedge^2 \Theta \otimes \A_f$ that is the characteristic function of $\wedge^2 \Theta_0 \otimes \widehat{\Z}$. Note that $\Phi_f$ is stable by $K_f$. For $g \in G(\A_f)$ we define
\[
f_{\operatorname{fte}}(g,\Phi_f,s) = \int_{\GL_1(\A_f)}{|t|^s \Phi_f(t g^{-1} E_{13})\,dt},
\]
where $f_{\operatorname{fte}}$ stands for the finite part of $f$. Also, let
\[
f(g,\Phi_f,s) = f_{\operatorname{fte}}(g_f,\Phi_f,s) f_{\ell}(g_\infty,s).
\]
Because $\Phi_f$ is $K_f$-stable, it is immediate that $f(g,\Phi_f,s) = \zeta(s) f_{\ell}(g,s)$. We set
\begin{equation}\label{def:E g phif s}
E(g,\Phi_f,s) = \sum_{\gamma \in P_{G}(\Q)\backslash G(\Q)}{f(\gamma g,\Phi_f,s)}.
\end{equation}
Then the Rankin-Selberg integral is defined as
\[
I(\varphi,\Phi,s) = \int_{G_2(\Q)\backslash G_2(\A)}{\{\varphi(g),E(g,\Phi_f,s)\}_K\,dg}.
\]
Recall that here $\varphi$ is a modular form of weight $\ell$ on $G_2$. In particular, $\varphi$ is valued in $\mathbf{V}_{\ell}$, and constructed from a cuspidal automorphic representation $\pi = \pi_f \otimes \pi_{\infty,\ell}$, where $\pi_{\infty,\ell}$ is a quaternionic discrete series of minimal $K$-type $\mathbf{V}_{\ell}^\vee \simeq \mathbf{V}_{\ell}$. The term $\{\varphi(g),E(g,\Phi_f,s)\}_K$ is the $K$-invariant pairing of these two $\mathbf{V}_{\ell}$-valued automorphic functions.

\subsection{The unfolded Rankin-Selberg integral}

We now explain how the Rankin-Selberg integral \\ $I(\varphi,\Phi_f,s)$ unfolds.

Let $v_{E} \in W$ denote the binary cubic
\[
v_{E}= w^2z + wz^2 = wz(w+z).
\]
Also, let $\chi$ be the character of $N(\Q)\backslash N(\A)$ determined by $v_{E}$ via the association given at the end of Section \ref{subsec:Heis} and set
\[
\varphi_\chi(g) = \int_{N(\Q)\backslash N(\A)}{\chi^{-1}(n)\varphi(ng)\,dn}.
\]

We further define
\[
\widetilde{v_E} = \epsilon_1 \wedge (e_1 + e_3^*),
\]
which is an element of $\wedge^2 \Theta$. Let $N^{0,E} \subseteq N$ be the subgroup consisting of those $n \in N$ for which $\langle v_{E}, \overline{n} \rangle = 0$.

\begin{theorem}
We have
	\begin{equation}
	I(\varphi,\Phi_f, s) = \int_{N^{0,E}(\A)\backslash G(\A)}\{f(\gamma_0g, \Phi_f, s),\varphi_{\chi}(g)\}_K \,dg,
	\end{equation}
where $\gamma_0 \in G(\Q)$ satisfies $\gamma_0^{-1} E_{13} = \widetilde{v_E}$.
\end{theorem}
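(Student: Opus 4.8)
The plan is to carry out the standard unfolding of the Rankin-Selberg integral $I(\varphi,\Phi_f,s)$, in the spirit of \cite{GurevichSegal} and \cite{PollackG2}. First I would replace $E(g,\Phi_f,s)$ in the integral by its defining sum $\sum_{\gamma \in P_G(\Q)\backslash G(\Q)} f(\gamma g,\Phi_f,s)$ and interchange the sum with the integral over $G_2(\Q)\backslash G_2(\A)$, which is legitimate for $\mathrm{Re}(s)$ large by absolute convergence of the Eisenstein series against the rapidly decreasing cusp form $\varphi$. The first genuinely structural input is the description of the double coset space $P_G(\Q)\backslash G(\Q)/G_2(\Q)$: there is a unique open orbit, represented by $\gamma_0$, whose stabilizer in $G_2$ is the relevant subgroup of the Heisenberg parabolic $P$ of $G_2$, while all remaining orbits are ``negligible'' and contribute $0$ after integrating against the cusp form $\varphi$ (this is the orbit analysis underlying the Gurevich--Segal construction, which one cites). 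Collapsing to the open orbit then rewrites the integral as $\int_{(P\cap \gamma_0^{-1}P_G\gamma_0)(\Q)\backslash G_2(\A)} \{f(\gamma_0 g,\Phi_f,s),\varphi(g)\}_K\,dg$, and one identifies the stabilizer $P \cap \gamma_0^{-1}P_G\gamma_0$ with the subgroup of $P$ fixing the line spanned by $\widetilde{v_E} = \epsilon_1\wedge(e_1+e_3^*)$, i.e.\ the subgroup whose unipotent part lies over $N^{0,E}$.

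Next I would unfold the remaining quotient by the unipotent directions transverse to the stabilizer. Writing $G_2(\Q)\backslash G_2(\A)$-integration against $\varphi$ and peeling off the integration over $N^{0,E}(\Q)\backslash N(\A)$, the inner integral over $N^{0,E}(\Q)\backslash N(\Q)$ of $\varphi(ng)$ produces the Fourier coefficient of $\varphi$ along $N$ against the character attached to $v_E$, because the section $f(\gamma_0 n g,\Phi_f,s)$ transforms under $n\in N$ by exactly the character $\chi(n) = \psi(\langle v_E,\overline n\rangle)$ (this is the reason $v_E = wz(w+z)$ and $\widetilde{v_E}$ are chosen to be compatible under $\gamma_0^{-1}E_{13} = \widetilde{v_E}$; the computation that $f(\gamma_0 n g,\Phi_f,s) = \chi(n) f(\gamma_0 g,\Phi_f,s)$ for $n\in N$ reduces to the action of $N$ on $\wedge^2\Theta$ fixing $\widetilde{v_E}$ up to this phase, together with the $Z$-invariance of $f_{\mathrm{fte}}$). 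The Fourier coefficient is precisely $\varphi_\chi(g)$, so after unfolding one arrives at $\int_{N^{0,E}(\A)\backslash G(\A)}\{f(\gamma_0 g,\Phi_f,s),\varphi_\chi(g)\}_K\,dg$ — but some care is needed because the outer domain is $G_2$ while the displayed answer integrates over $G(\A)$; here one uses that $N^{0,E}(\A)\backslash G_2(\A)$ and $N^{0,E}(\A)\backslash G(\A)$ carry the same integrand by $K$-equivariance of $\{\cdot,\cdot\}_K$ and the fact that $f(\cdot,\Phi_f,s)$ extended to $G(\A)$ agrees, after choosing the right measure normalization, so that the enlargement of the domain is harmless (or, depending on conventions, the statement is literally over $G_2(\A)$ and the $G$ in the display is a typo for $G_2$; I would state it as written and verify the domains match).

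The steps I expect to be the main obstacle are, first, the precise orbit decomposition $P_G(\Q)\backslash G(\Q)/G_2(\Q)$ and the vanishing of the non-open orbits against a cusp form — this is where the real geometry of $G_2 \subseteq \Spin(8)$ enters and where one must invoke (and correctly transcribe) the Gurevich--Segal analysis rather than rederive it; and second, the bookkeeping of the stabilizer: showing that $\mathrm{Stab}_{G_2(\Q)}(\gamma_0^{-1}\cdot[E_{13}])$ is exactly the group with Lie-algebra-level unipotent part $N^{0,E}$ and no extra Levi part, which requires the explicit matrix action of $M\simeq\GL_2$ on $\Theta$ recorded in Section~\ref{subsec:Heis} together with the identification $W\simeq N/Z$. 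The convergence/interchange justification and the identification of the $N$-transformation character of $f(\gamma_0 n g,\Phi_f,s)$ are routine by comparison, the latter following from the $K_f$-invariance and $Z$-invariance built into $f_{\mathrm{fte}}(g,\Phi_f,s)$ and the choice $\gamma_0^{-1}E_{13} = \widetilde{v_E}$.
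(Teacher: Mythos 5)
Your overall skeleton (unfold the Eisenstein series, decompose $P_{G}(\Q)\backslash G(\Q)$ into $G_2(\Q)$-orbits, discard the negligible orbits by cuspidality, keep the open orbit) is the same route as the references the paper itself cites for this theorem (Gurevich--Segal and Theorem 5.2 of \cite{PollackG2}); the paper gives no independent proof. However, the step by which you produce $\varphi_\chi$ is a genuine gap: you assert that $f(\gamma_0 n g,\Phi_f,s)=\chi(n)\,f(\gamma_0 g,\Phi_f,s)$ for $n\in N$. This cannot happen. The function $f(\cdot,\Phi_f,s)$ is a section of a degenerate principal series induced from the character $|\nu|^s$ of $P_G$, so it can only acquire left-translation factors through elements that are conjugated into $P_G$, and those factors are values of $|\nu|^s$, never a nontrivial additive character. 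Concretely, writing $n=\exp(X)$ with $X=u_1E_{12}+u_2v_1+u_3\delta_3+u_4E_{23}$ as in the proof of Lemma \ref{Lemma for Phichi}, one has $n^{-1}\widetilde{v_E}=\widetilde{v_E}+(u_2-u_3)E_{13}$; hence for $n\notin N^{0,E}$ the element $\gamma_0 n\gamma_0^{-1}$ does not lie in $P_G$ at all, while for $n\in N^{0,E}$ one simply gets $f(\gamma_0 ng,\Phi_f,s)=f(\gamma_0 g,\Phi_f,s)$ with trivial character. The character $\chi$ must instead come from the cusp form side: after collapsing to the open orbit one has an integral over $R(\Q)\backslash G_2(\A)$, where $R$ is the stabilizer in $G_2$ of the line $\Q\widetilde{v_E}$; one integrates $\varphi$ over $N^{0,E}(\Q)\backslash N^{0,E}(\A)$ (where $f$ is invariant), Fourier-expands the result along the one-dimensional quotient $N/N^{0,E}$, discards the trivial character because it is the constant term of $\varphi$ along $N$, and then uses the rational points of the torus inside $R$ to collapse the sum over the remaining characters $\chi_c$, $c\in\Q^{\times}$, to the single character $\chi$, the adelic points of that torus being absorbed into the final domain $N^{0,E}(\A)\backslash G_2(\A)$.

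This also exposes a second error in your bookkeeping: the stabilizer of the line $\Q\widetilde{v_E}$ is not ``$N^{0,E}$ with no extra Levi part.'' Since $\dim P_G\backslash G=9$ and $\dim G_2=14$, openness of the orbit forces a five-dimensional stabilizer, namely (up to finite index) a one-dimensional torus of $M\simeq\GL_2$ times the four-dimensional group $N^{0,E}$; that torus is precisely the ingredient your argument is missing, since without it the sum over $c\in\Q^{\times}$ above does not collapse to the single coefficient $\varphi_\chi$. Finally, concerning the domain: your second reading is correct --- the $G(\A)$ in the display is a typo for $G_2(\A)$ (compare the local integrals \eqref{Ips} and $I(s;\ell)$, both taken over $G_2$, and note that $\varphi_\chi$ is only defined on $G_2(\A)$) --- whereas your first suggested fix, enlarging the domain from $G_2(\A)$ to $G(\A)$ ``harmlessly,'' is not meaningful.
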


The proof of this theorem is due to Gurevich and Segal~\cite{GurevichSegal, Segal2017}, but its above form is essentially Theorem 5.2 in~\cite{PollackG2}.


\subsection{Local integrals}
\label{local integrals}
In order to analyze $I(\varphi,\Phi_f,s)$, we must consider the associated local integral at each place of $\Q$. In this section, we describe these local integrals and provide an outline of their computation to be done in the later sections.

The integral at the archimedean place is given by
\[
I(s;\ell) = \int_{N^{0,E}(\R)\backslash G_2(\R)}{\{f_{\ell}(\gamma_0 g,s), \mathcal{W}_\chi(g)\}_K\,dg}.
\]
Here $\mathcal{W}_\chi$ is the generalized Whittaker function of \cite[Section 4]{PollackG2} and \cite{PollackQDS}. Now, in view of the normalization of the Eisenstein series in \eqref{eqn:NEis}, note that
\[
\Gamma_\R(s-1)^2 \Gamma_\R(s) \Gamma_\R(2s-4) \frac{\Gamma(s+\ell-1)\Gamma(s+\ell-2)}{\Gamma(s-1)\Gamma(s-2)}
= 2^{s} \Gamma_\R(s-1) \Gamma_\C(s+\ell-1)\Gamma_\C(s+\ell-2).
\]
Thus we suitably define a normalized archimedean zeta integral as
\begin{equation*}
I^*(s;\ell) =2^{s} \Gamma_\R(s-1) \Gamma_\C(s+\ell-1)\Gamma_\C(s+\ell-2)  I(s;\ell).
\end{equation*}
This integral will be computed in Section \ref{sec:archInt}. In Theorem \ref{thm:I*int}, we will prove that $I^*(s;\ell) = L(\pi_{\ell,\infty},s-2)$ up to a nonzero constant.

We now define the local integrals at the finite places. Let $\Phi_p$ denote the characteristic function of $\wedge^2 \Theta_0 \otimes \Z_p$ and
\[
f_p(g,\Phi_p,s) := \int_{\GL_1(\Q_p)}{|t|^s \Phi_p(t g^{-1} E_{13})\,dt}.
\]
Here $f_p(g,\Phi_p,s)$ is also the associated local inducing section, so that $f_p(1,\Phi_p,s) = \zeta_p(s)$. Let $V_{\pi_p}$ denote the space of the representation $\pi_p$, and write $v_0$ for a spherical vector. Suppose that $\mathcal{L}: V_{\pi_p} \rightarrow \C$ is an $(N,\chi)$-functional, that is,
\[
\mathcal{L}(n v) = \chi(n) \mathcal{L}(v) \quad \text{for all }\, n \in N(\Q_p)  \text{ and } \, v \in V_{\pi_p}.
\]
At a finite place $p$, we will compute
\begin{align}
	\label{Ips}
	I_p(s) = I_p(\mathcal{L},s)
	&:= \int_{N^{0,E}(\Q_p)\backslash G_2(\Q_p)}{f_p(\gamma_0 g,\Phi_p,s) \mathcal{L}(g v_0)\,dg} \\
	&= \int_{N(\Q_p) \backslash \GL_1(\Q_p) \times G_2(\Q_p)}{|t|^{s} \Phi_{p,\chi}(t,g) \mathcal{L}(g v_0)\,dt\,dg}, \nonumber
\end{align}
where
\begin{equation}\label{Phichi}
\Phi_{p,\chi}(t,g) = \int_{N^{0,E}(\Q_p)\backslash N(\Q_p)}{\chi(n) \Phi_p(t g^{-1}n^{-1} \widetilde{v_{E}})\,dn}.
\end{equation}
We will prove the following theorem.

\begin{theorem}\label{thm:Ip}
We have
\begin{equation}\label{eq:I p result}
I_p(s)=
I_p(\mathcal{L};s) =\mathcal{L}(v_0) \frac{L(\pi_p,\operatorname{Std},s-2)}{\zeta_p(s-1)^2 \zeta_p(2s-4)}.
\end{equation}
\end{theorem}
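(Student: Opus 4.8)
The plan is to evaluate $I_p(s)$ by first computing the kernel $\Phi_{p,\chi}(t,g)$ of~\eqref{Phichi} explicitly, and then carrying out the remaining $G_2(\Q_p)$-integration against the spherical vector $v_0$. Since $v_0$ is fixed by $G_2(\Z_p)$, the function $g\mapsto\mathcal{L}(gv_0)$ is right $G_2(\Z_p)$-invariant; moreover $\Phi_{p,\chi}(t,\cdot)$ is right $G_2(\Z_p)$-invariant as well, because $\Phi_p$ is the characteristic function of the $G_2(\Z_p)$-stable lattice $\wedge^2\Theta_0\otimes\Z_p$. Using the Iwasawa decomposition $G_2(\Q_p)=N(\Q_p)M(\Q_p)G_2(\Z_p)$ relative to the Heisenberg parabolic $P=NM$, together with the right $M(\Z_p)$-invariance of both factors, I would reduce $I_p(s)$ to a sum over the diagonal torus of $M\simeq\GL_2(\Q_p)$, with $|t|^s$, the modulus character $\delta_P$, and the local zeta factors supplying explicit powers of $p$.

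The evaluation of $\Phi_{p,\chi}$ is the core of the argument, and is where the results of Sections~\ref{Sect. binary cubic} and~\ref{sec:ABF} enter. Writing out the action of $M\simeq\GL_2(\Q_p)$ on $\widetilde{v_E}=\epsilon_1\wedge(e_1+e_3^*)\in\wedge^2\Theta$ via the formulas of Section~\ref{subsec:Heis}, the inner integral over $N^{0,E}(\Q_p)\backslash N(\Q_p)$ against $\chi$ becomes a concrete $p$-adic integral whose value is a product of local zeta factors times the characteristic function of an integrality locus. The condition cutting out this locus is governed by the binary cubic $v_E=wz(w+z)$, and under the dictionary between binary cubics and cubic rings it translates into a constraint involving the cubic subrings of $\Z_p\times\Z_p\times\Z_p$; this is exactly the Fourier coefficient of the ``approximate basic function'' computed in Section~\ref{sec:ABF}.

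Substituting this formula back, $I_p(s)$ becomes a generating series indexed by these cubic rings (equivalently, by a family of Hecke operators applied to $v_0$), whose coefficients are the values $\mathcal{L}(\gamma\,v_0)$ weighted by the powers of $p$ noted above. To sum this series I would invoke the Hecke-operator relations for the unramified representation $\pi_p$ — in effect the analogue, for the generalized Whittaker functional $\mathcal{L}$ of~\cite{PollackG2,PollackQDS}, of the Casselman--Shalika formula expressing $\mathcal{L}(\gamma\,v_0)$ on the torus in terms of the Satake parameters of $\pi_p$ — which identifies the series with $L(\pi_p,\operatorname{Std},s-2)$; the leftover factors $\zeta_p(s-1)^2\zeta_p(2s-4)$ in the denominator are precisely those produced by the $N^{0,E}\backslash N$ integral and the Mellin variable $t$. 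For $p\geq5$ this matching was carried out by Gurevich and Segal~\cite{GurevichSegal}; for $p=2,3$ the correspondence between cubic rings and binary cubic forms is no longer exact, so one argues directly with cubic rings over $\Z_2$ and $\Z_3$, following~\cite{PollackG2}.

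The main obstacle is twofold. First, the explicit evaluation of $\Phi_{p,\chi}$ requires a clean bijective description of the relevant cubic rings (or lattices) together with their invariants, and this is genuinely more delicate at $p=2,3$, where Delone--Faddeev-type parametrizations degenerate. Second, one must verify that the resulting generating series over cubic rings telescopes to \emph{exactly} the stated ratio; this amounts to matching, term by term, the Euler factor of the seven-dimensional standard representation of the dual group against the Hecke recursions, and it is here that the choice of the split ring $\Z\times\Z\times\Z$ is essential, since it makes every Fourier coefficient that intervenes a Hecke-multiple of $\mathcal{L}(v_0)$ (cf.\ the identity underlying Corollary~\ref{cor:introDS}).
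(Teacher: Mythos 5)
Your first steps do match the paper: the explicit evaluation of $\Phi_{p,\chi}$ in \eqref{Phichi} via the action of $M\simeq\GL_2$ on $\widetilde{v_E}$ and the cubic-ring dictionary (Lemma \ref{Lemma for Phichi} and Corollary \ref{cor:A0}), followed by the Iwasawa decomposition, which reduces $I_p(s)$ to a sum over cosets $[h]=h\GL_2(\Z_p)$ weighted by $\mathcal{L}(hv_0)$. The gap is in your final summation step. You propose to close the computation by invoking ``the analogue of the Casselman--Shalika formula expressing $\mathcal{L}(\gamma v_0)$ on the torus in terms of the Satake parameters of $\pi_p$.'' No such formula exists here: the character $\chi$ attached to $v_E=wz(w+z)$ is degenerate, the space of $(N,\chi)$-functionals on $\pi_p$ is not one-dimensional, and consequently the individual values $\mathcal{L}(hv_0)$ are not determined by the Satake parameter of $\pi_p$ at all --- they depend on the choice of $\mathcal{L}$. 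This non-uniqueness is precisely why the integral is a ``new way'' (non-unique model) integral, and any argument that tries to evaluate the coset sum term by term through the values $\mathcal{L}(hv_0)$ cannot work.

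What the paper does instead is prove the identity \eqref{mainequation}, valid for \emph{every} $(N,\chi)$-functional $\mathcal{L}$: the weighted coset sum coming from $\Phi_{p,\chi}$ is matched, after applying the explicit operator $M(\pi_p,s)$ (built from the $\GL_2$-Hecke operator $\mathcal{T}$ and elementary factors), against the integral of the approximate basic function $\Delta(t,g)$ of Section \ref{sec:ABF}. The point is that $\Delta(t,\cdot)$ is bi-$G_2(\Z_p)$-invariant, so its $t$-Mellin transform is a (series of) genuine Hecke operators acting on the spherical vector $v_0$ by scalars; the standard $L$-factor then emerges from the Satake transform of this (approximate) basic function, as in Proposition 7.1 of Gurevich--Segal and Section 5.3 of Pollack's paper --- never from a formula for $\mathcal{L}(hv_0)$. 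So you should replace your Casselman--Shalika step by this comparison of the two kernels: compute the $\chi$-Fourier coefficient $D_\chi(t,h)$ of $\Delta$ (Section \ref{sec:ABF}) and the kernel $\Phi_{p,\chi}$ (Section \ref{sec:non-arch}) separately, and verify \eqref{mainequation}, which for $p=2,3$ requires the content/sublattice analysis of Lemmas \ref{Lemma5.10inPollackG2} and \ref{Lemma5.12inPollackG2} and the $\bmod\ 3$ condition $\dagger$ of Section \ref{Sect. binary cubic}. (A smaller inaccuracy: the cubic ring--binary cubic correspondence of Gan--Gross--Savin and Gross--Lucianovic is valid at $p=2,3$ as well; the delicacy at small primes lies in the congruence condition $\dagger$ and in the residue-field factorization counts, not in a failure of the parametrization.)
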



The proof of this theorem proceeds by following the strategies in \cite{GurevichSegal,Segal2017,PollackG2}.

Let $V_7$ be the perpendicular subspace to $1$ in $\Theta$, and set $V_7(\Z) = V_7 \cap \Theta_0$. Similarly, define $V_7(\Z_p) = V_7(\Z) \otimes \Z_p$. Note that because $G_2$ stabilizes $1$, $V_7(\Z_p)$ is stabilized by $G_2(\Z_p)$. We write $r_7: G_2 \rightarrow \GL(V_7)$ for the action map.

We now define two Hecke operators on $G_2$. First, for $t \in \GL_1(\Q_p)$ and $h \in G_2(\Q_p)$, let
\[
\Delta(t,h) = \charf\big(t \cdot r_7(h) \in \textrm{End}(V_7(\Z_p))\big).
\]
We call this the approximate basic function (see Section 5.3 in \cite{PollackG2} for some remarks on this terminology). Define another Hecke operator on $G_2$ as
\[
\mathcal{T}=p^{-3}\times\charf\big(g \in G_2(\Q_p),~~ p.r_7(g) \in \textrm{End}(V_7(\Z_p))\big).
\]
For ease of notation, let $z=p^{-s}$.
In order to prove \eqref{eq:I p result}, we will prove that
\begin{equation}\label{mainequation}
\begin{split}
	\int_{\GL_1(\Q_p)\times G_2(\Q_p)}
	&|t|^{s+2}\Delta(t,g)\mathcal{L}(gv_0)\,dt\,dg \\
	&= M(\pi_p, s)\int_{N(\Q_p)\backslash GL_1(\Q_p)\times G_2(\Q_p)}|t|^{s+1}\Phi_{p,\chi}(t,g)\mathcal{L}(gv_0)\,dt\,dg,
\end{split}
\end{equation}
where
\[
M(\pi_p, s) = (1-pz)(1-z)N_0(\pi_p, s-1)\zeta_p(s)^2\zeta_p(2s-2),
\]
and
\[
N_0(\pi_p,s)v_0 = 1+ (p^{-1}+1)z + \frac{z^2}{p} + (p^{-2}+p^{-1})z^3 + \frac{z^4}{p^2}- \frac{z^2}{p}T.
\]
Proving \eqref{mainequation} implies our desired relation between $I_p(\mathcal L;s)$ and $L(\pi_p,\operatorname{Std},s)$ as in Theorem~\ref{thm:Ip}, which is essentially Proposition 7.1 in \cite{GurevichSegal}. For an explanation of this implication, see Section 5.3 in \cite{PollackG2}.

Now, \eqref{mainequation} has been proved for $p \geq 5$ in \cite{GurevichSegal} and \cite{PollackG2}. We will prove it for $p =2,3$ as well. To do so, we will compute the left-hand side and the right-hand side of \eqref{mainequation} separately and show that they are the same. The left-hand side will be considered in Section \ref{sec:ABF} and the right-hand side will be computed in Section \ref{sec:non-arch}. In order to do these computations, some properties of binary cubic forms and their relations to cubic rings will be useful. We spell these out in the next section.


\section{The arithmetic invariant theory of binary cubics}\label{Sect. binary cubic}

In this section, we describe some results on the relationship between binary cubic forms and cubic rings. We refer to \cite[Section 4]{GanGrossSavin} and \cite{GrossLucianovic} for a primer on this relationship.

Suppose that
\[
f(w,z) = a w^3 + b w^2 z + c wz^2 + d z^3
\]
is a binary cubic over some ring $R$. One associates to $f$ the cubic $R$-algebra $T$ with the basis $\{1,\omega,\theta\}$ and the multiplication table
\begin{itemize}
\setlength\itemsep{0.2em}
	\item $\omega \theta = - a d$,
	\item $\omega^2 = -a c + a \theta - b \omega$,
	\item $\theta^2 = - b d + c \theta -d \omega$.
\end{itemize}
Following \cite[Section 4]{GanGrossSavin}, we call such a basis a good basis. Suppose that $m = \mm{m_{11}}{m_{12}}{m_{21}}{m_{22}}$ is a $2\times 2$ matrix with coefficients in $R$. Write $T_m$ for the $R$-lattice in $T$ that is spanned by $\{1, m_{11}\omega + m_{12}\theta, m_{21}\omega + m_{22}\theta\}$. Let $\omega''$ and $\theta''$ be defined by the relation
\begin{equation}\label{defn w'' theta''}
\Big(\begin{array}{c} \omega''\\ \theta'' \end{array}\Big) = m \Big(\begin{array}{c} \omega \\ \theta \end{array}\Big)= \mb{m_{11}}{m_{12}}{m_{21}}{m_{22}}\Big(\begin{array}{c} \omega \\ \theta \end{array}\Big).
\end{equation}
One can naturally ask what condition guarantees that $T_m$ is closed under multiplication. This question is answered by the following proposition.

\begin{proposition} \label{AIT}
Suppose that $R$ has characteristic $0$. Set
\[
f'(w,z) = m \cdot f(w,z),
\]
where the action is as given in \eqref{GL2 action}, and write
\[
f'(w,z) = a' w^3 + b' w^2 z + c' wz^2 + d' z^3.
\]
With the notation as above, the $R$-lattice $T_m$ is closed under multiplication if and only if
	\begin{enumerate}[label=(\roman*)]
	\setlength\itemsep{0.3em}
		\item $f'(w,z)$ has coefficients in $R$, that is, $a',b',c',d' \in R$,
		\item and $\bigg(\begin{array}{c} b' \\-c'\end{array}\bigg) \equiv m \bigg(\begin{array}{c} b \\ -c \end{array}\bigg) \pmod{3}$.
	\end{enumerate}
\end{proposition}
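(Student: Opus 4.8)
The plan is to reduce the statement to a direct comparison of multiplication tables. By definition $T_m = R\cdot 1 + R\cdot\omega'' + R\cdot\theta''$, so $T_m$ is closed under multiplication if and only if each of the three products $(\omega'')^2$, $\omega''\theta''$, $(\theta'')^2$ lies in $R\cdot 1 + R\cdot\omega'' + R\cdot\theta''$. Since $R$ has characteristic $0$ we may work inside $T\otimes_\Z\Q = T\otimes_R R_\Q$, where $R_\Q := R\otimes_\Z\Q$; there $\det(m)$ and $3$ are invertible (the action $m\cdot f$ of \eqref{GL2 action} already presupposes $\det(m)$ invertible in $R_\Q$), so $\{1,\omega'',\theta''\}$ is an $R_\Q$-basis and $m^{-1}$ expresses $\omega,\theta$ as explicit $R_\Q$-combinations of $\omega'',\theta''$. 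The strategy is then: compute the multiplication table of $\{1,\omega'',\theta''\}$ in $T\otimes R_\Q$, compare it term by term with the good-basis multiplication table attached to $f' = m\cdot f$, and show that agreement — equivalently, integrality of all the structure constants — is governed exactly by (i) and (ii).

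For the computation I would expand $(\omega'')^2 = m_{11}^2\,\omega^2 + 2m_{11}m_{12}\,\omega\theta + m_{12}^2\,\theta^2$, and the analogous expressions for $\omega''\theta''$ and $(\theta'')^2$, substitute the good-basis relations $\omega\theta = -ad$, $\omega^2 = -ac + a\theta - b\omega$, $\theta^2 = -bd + c\theta - d\omega$, and rewrite the result in terms of $1,\omega'',\theta''$ via $m^{-1}$. The constant terms that emerge are, up to sign, the invariants $a'c'$, $a'd'$, $b'd'$ of $f'$ — here one uses the elementary identities $a' = \det(m)^{-1}f(m_{11},m_{12})$, $d' = \det(m)^{-1}f(m_{21},m_{22})$ together with their mixed analogues for $b',c'$ — while the coefficients of $\omega''$ and $\theta''$ come out as linear forms in $a,b,c,d$ whose coefficients are rational in the entries of $m$. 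Demanding that \emph{all} these structure constants lie in $R$ forces first that $a',b',c',d'\in R$, which is precisely condition (i).

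Granting (i), the residual integrality requirement on the $\omega''$- and $\theta''$-coefficients should collapse, after collecting terms and tracking $3$-divisibility systematically, to the statement that both components of $\binom{b'}{-c'} - m\binom{b}{-c}$ are divisible by $3$ in $R$, i.e. exactly the congruence (ii); the factor of $3$ is the usual artifact of the normalization $W = \operatorname{Sym}^3(V_2)\otimes\det(V_2)^{-1}$, the same one visible in the $\tfrac13$'s in the symplectic pairing on $W$ and in the identification $W\simeq N/Z$ of Section \ref{subsec:Heis}. Conversely, once (i) and (ii) both hold, all structure constants lie in $R$ and $T_m$ is closed; moreover translating $\omega''\mapsto\omega'' + i$, $\theta''\mapsto\theta'' + j$ by the unique $i,j\in R$ killing the $\omega'',\theta''$-components of the product of the two generators turns $\{1,\omega'',\theta''\}$ into a good basis, whose binary cubic is $f'$, so $T_m$ is canonically the cubic ring attached to $f'$.

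I expect the main obstacle to be the bookkeeping in this last reduction: verifying that the residual obstruction is precisely the vector congruence with transformation law $\binom{b'}{-c'}\equiv m\binom{b}{-c}$, rather than some $\det(m)$-twisted variant. A useful sanity check at each stage is the invertible case $m\in\GL_2(R)$, which must reproduce the Delone--Faddeev / Gan--Gross--Savin equivariance of $f\mapsto T_f$ — so (ii) must hold automatically there, e.g. $b' = b + 2c + 3d\equiv b-c\pmod 3$ for $m = \mm{1}{1}{0}{1}$ — and, combined with the multiplicativity of both sides of the statement in $m$, this also allows one, if desired, to reduce to the cases where $m$ is unipotent or $m=\operatorname{diag}(1,\varpi)$.
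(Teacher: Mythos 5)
Your overall plan --- compute the multiplication table of $\{1,\omega'',\theta''\}$ inside $T\otimes R_\Q$, where $3$ and $\det(m)$ are invertible, and read off that $R$-integrality of all structure constants is equivalent to (i) plus (ii) --- is viable, and it rests on the same mechanism as the paper's proof, only packaged differently. The paper never expands the products of $\omega''$ and $\theta''$ by brute force: it first introduces the corrected elements $\omega'=\omega''+\delta_1$, $\theta'=\theta''+\delta_2$, where $\binom{\delta_1}{\delta_2}=\tfrac{1}{3}\bigl(\binom{b'}{-c'}-m\binom{b}{-c}\bigr)$, proves in Lemma \ref{lem:multTable} that $\{1,\omega',\theta'\}$ obeys the good-basis table of $f'=m\cdot f$ (the equivariance statement, established via the trace-zero basis and the covariance of the Hessian), and then deduces the proposition in Proposition \ref{Prop cubic} from a short shift argument: the single product $\omega''\theta''=(\omega'-\delta_1)(\theta'-\delta_2)$ has $\omega''$- and $\theta''$-components $-\delta_2$ and $-\delta_1$, so closure of $T_m$ already forces $\delta_1,\delta_2\in R$, i.e.\ $\dagger$, after which $\mathrm{Span}_R(1,\omega',\theta')=T_m$ finishes the equivalence.

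Two points in your write-up need repair. First, your description of the table you will obtain is not correct: the constant terms are not $\pm a'c'$, $\pm a'd'$, $\pm b'd'$, because $\{1,\omega'',\theta''\}$ satisfies the good-basis table of $f'$ only after the shift by $\delta_1,\delta_2$. What the computation actually gives is, e.g., $\omega''\theta''=(-a'd'-\delta_1\delta_2)-\delta_2\omega''-\delta_1\theta''$ and $(\omega'')^2=(-a'c'+a'\delta_2-b'\delta_1-\delta_1^2)+a'\theta''-(b'+2\delta_1)\omega''$; your identities $a'=\det(m)^{-1}f(m_{11},m_{12})$ and $d'=\det(m)^{-1}f(m_{21},m_{22})$ are correct and do show that the $\theta''$-coefficient of $(\omega'')^2$ is $a'$ and the $\omega''$-coefficient of $(\theta'')^2$ is $-d'$, but the constants carry $\delta$-corrections (harmless, since they lie in $R$ once (i) and (ii) hold, and they are not needed for the forward direction). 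Second, the step you yourself flag as the main obstacle --- that the residual obstruction is exactly the untwisted congruence $\dagger$ --- is only asserted, not carried out, and it is precisely where the content lies. It does come out right, and the painless way to see it is the cross-term just displayed: closure gives $\delta_1,\delta_2\in R$ directly, the squares give $a',d'\in R$ and $b'+2\delta_1,\,c'-2\delta_2\in R$, hence $b',c'\in R$, which is (i) and (ii); conversely (i) and (ii) render every structure constant above integral. So your route closes, but as written the decisive bookkeeping is missing, and the paper's Lemma \ref{lem:multTable}/Proposition \ref{Prop cubic} organization is exactly the device that eliminates it.
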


The proof of this proposition requires a lemma. Following the notation in \eqref{defn w'' theta''}, set $\omega'' =  m_{11}\omega + m_{12}\theta$ and $\theta'' = m_{21}\omega + m_{22}\theta$. Let $\omega'$ and $\theta'$ in $T\otimes \textrm{Frac}(R)$ be defined by
\[
\bigg(\begin{array}{c} \omega' \\ \theta' \end{array}\bigg)
= m \bigg(\begin{array}{c} \omega \\ \theta \end{array}\bigg) + \frac{1}{3} \bigg\{\bigg(\begin{array}{c} b' \\ -c' \end{array}\Big) -m\Big(\begin{array}{c} b \\ -c \end{array}\bigg) \bigg\}.
\]
Finally, let
\[
\omega_0 = \omega +\frac{b}{3} \quad \text{and} \quad \theta_0 = \theta-\frac{c}{3},
\]
so that $\tr(\omega_0) = \tr(\theta_0) = 0$.

\begin{lemma}\label{lem:multTable}
The elements $\omega'$ and $\theta'$ defined above have the multiplication table
	\begin{enumerate}[label=(\roman*)]
	\setlength\itemsep{0.3em}
		\item $\omega' \theta' = - a' d'$,
		\item $\omega'^2 = -a' c' + a' \theta' - b' \omega'$,
		\item $\theta'^2 = - b' d' + c' \theta' -d' \omega'$.
	\end{enumerate}
\end{lemma}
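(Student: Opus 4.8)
The strategy is a direct computation: express the products $\omega'\theta'$, $\omega'^2$, $\theta'^2$ in terms of the good basis $\{1,\omega,\theta\}$ (whose multiplication table is given), using the explicit definitions of $\omega'$, $\theta'$, and then match the result against the right-hand sides of (i)--(iii). The cleanest way to organize this is to first pass to the trace-zero elements $\omega_0 = \omega + b/3$ and $\theta_0 = \theta - c/3$, since the translation by $1/3\{(b',-c')^t - m(b,-c)^t\}$ that defines $\omega',\theta'$ is precisely designed so that $\omega' = m_{11}\omega_0 + m_{12}\theta_0 + b'/3$ and $\theta' = m_{21}\omega_0 + m_{22}\theta_0 - c'/3$; that is, $(\omega',\theta')^t = m(\omega_0,\theta_0)^t + (b'/3, -c'/3)^t$. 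So the first step is to rewrite the original multiplication table in terms of $\omega_0, \theta_0$, obtaining expressions of the form $\omega_0^2 = (\text{scalar}) + (\text{scalar})\,\omega_0 + (\text{scalar})\,\theta_0$, and similarly for $\omega_0\theta_0$ and $\theta_0^2$, where the scalars are polynomials in $a,b,c,d$.

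Second, I would compute the three products of $\omega', \theta'$ by bilinearity, substituting $(\omega',\theta')^t = m(\omega_0,\theta_0)^t + \tfrac13(b',-c')^t$ and expanding. Each product becomes a $\mathbb{Q}$-linear combination of $1$, $\omega_0$, $\theta_0$ (equivalently of $1,\omega,\theta$) with coefficients that are polynomials in $a,b,c,d$ and the entries $m_{ij}$. Third, I would invoke the classical fact that the map sending a binary cubic to its good-basis multiplication table is $\GL_2$-equivariant for the action \eqref{GL2 action}: the cubic $f' = m\cdot f$ has, by definition of that action, coefficients $a',b',c',d'$ that are the appropriate cubic/quadratic polynomials in $m_{ij}$ and $a,b,c,d$. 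So the final step is a bookkeeping verification that the coefficients I obtained are exactly $-a'd'$, $(-a'c', a', -b')$, and $(-b'd', c', -d')$ respectively, after converting back from the $\omega_0,\theta_0$ basis to the $\omega',\theta'$ basis (i.e. re-absorbing the $b'/3$ and $-c'/3$ shifts). It is convenient here to check the trace-zero parts and the ``norm'' (scalar) parts separately: the trace of $\omega'$ is $-b'$ and the trace of $\theta'$ is $c'$, which one reads off immediately, so it suffices to verify the identities for $\omega'\theta'$, $(\omega')^2 + b'\omega'$, $(\theta')^2 - c'\theta'$ projected to the trace-zero line, plus one scalar identity in each case.

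The main obstacle is purely the density of the algebra: the coefficients $a',b',c',d'$ of $m\cdot f$ are genuinely messy (cubic in the $m_{ij}$), and cross-multiplying with the quadratic relations for $\omega_0,\theta_0$ produces long expressions that must be shown to collapse. There is no conceptual difficulty — $\GL_2$-equivariance of the good-basis construction is exactly what makes it work, and one could in principle deduce the lemma formally from equivariance together with the known case $m = 1$ — but a clean self-contained argument still requires carrying out the substitution. To keep it manageable I would (a) work over $\mathrm{Frac}(R)$ throughout, where the $1/3$'s are harmless, (b) reduce by the two-step translation trick above so that only the ``linear in $m$'' object $m(\omega_0,\theta_0)^t$ appears in the bilinear expansion, and (c) exploit the $\GL_2$-equivariance statement to reduce to checking on a spanning set of $m$'s (for instance the elementary/diagonal generators of $\GL_2$), for each of which the computation is short; composing these and using associativity then gives the general $m$. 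This reduces the problem to a finite number of small verifications rather than one enormous one.
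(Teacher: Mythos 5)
Your plan is essentially the paper's own proof: it too passes to the trace-zero elements $\omega_0=\omega+b/3$, $\theta_0=\theta-c/3$, observes that the lemma is equivalent to the analogous trace-zero multiplication table for $(\omega_0',\theta_0')^{t}=m(\omega_0,\theta_0)^{t}$, and then verifies that table by a direct computation powered by $\GL_2$-equivariance. The only organizational difference is how the bookkeeping is tamed: the paper writes the entire trace-zero table as a single matrix identity, with linear coefficients $\tfrac{1}{3}\mm{3a}{b}{b}{c}$ and $-\tfrac{1}{3}\mm{b}{c}{c}{3d}$ and constant term a multiple of the Hessian covariant of $f$, and then simply conjugates by $m$ (applies $m(\cdot)m^{t}$); the transformation of the first two matrices is exactly the definition of $m\cdot f$, and that of the third is the classical equivariance of the Hessian, so no expansion in the entries of $m$ and no reduction to generators is needed. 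Your alternative of checking elementary and diagonal generators and composing would also work, since the trace-zero normalization makes the recipe for $(\omega',\theta')$ compose correctly under products of matrices.

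One concrete correction: your substitution formula has the wrong sign. Since the primed multiplication table forces $\tr(\omega')=-b'$ and $\tr(\theta')=c'$, the relation must be $\omega'=(m_{11}\omega_0+m_{12}\theta_0)-b'/3$ and $\theta'=(m_{21}\omega_0+m_{22}\theta_0)+c'/3$, not $+b'/3$ and $-c'/3$ as you wrote; with your signs the trace check you yourself propose already fails. When fixing this, be aware that the displayed definition of $\omega',\theta'$ just before the lemma appears to carry a sign typo: the version compatible with the lemma, with the trace computation, and with the definition of $\delta_1,\delta_2$ in the proof of Proposition \ref{Prop cubic} is $(\omega',\theta')^{t}=m(\omega,\theta)^{t}+\tfrac{1}{3}\bigl\{m(b,-c)^{t}-(b',-c')^{t}\bigr\}$, and it is this version that yields the clean trace-zero reduction above.
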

\begin{proof}
This lemma is well-known. It is essentially the statement that our association of cubic rings to binary cubic forms is equivariant under the action of $\GL_2$. For completeness, we give some details regarding a proof.
	
First, instead of checking the multiplication table above of $\{1, \omega',\theta'\}$, we verify the equivalent multiplication table for $\Big(\begin{array}{c} \omega'_0 \\ \theta'_0 \end{array}\Big) = m \Big(\begin{array}{c} \omega_0 \\ \theta_0 \end{array}\Big)$.

Now, the trace $0$ basis $\{\omega_0$, $\theta_0\}$ has the multiplication table
	\begin{itemize}
	\setlength\itemsep{0.6em}
		\item $\displaystyle \omega_0 \theta_0 = \frac{b}{3} \theta_0 - \frac{c}{3} \omega_0 + \left(\frac{bc}{9}-ad\right)$,
		\item $\displaystyle  \omega_0^2 = a \theta_0 - \frac{b}{3} \omega_0 + \frac{2}{9}(b^2-3ac)$,
		\item $\displaystyle  \theta_0^2 = \frac{c}{3} \theta_0 - d\omega_0 + \frac{2}{9}(c^2 - 3bd)$.
	\end{itemize}
We wish to prove that the multiplication table for $\{\omega_0',\theta_0'\}$ has the same form, with $a,b,c,d$ replaced by $a',b',c',d'$, respectively. To do this, we first write the multiplication table of $\{\omega_0,\theta_0\}$ as
\[
\bigg(\begin{array}{c} \omega_0 \\ \theta_0 \end{array}\bigg) \big(\begin{array}{cc} \omega_0 & \theta_0 \end{array}\big) = \frac{1}{3} \mb{3a}{b}{b}{c} \theta_0 - \frac{1}{3}\mb{b}{c}{c}{3d} \omega_0 + \frac{1}{9} \mb{2b^2-6ac}{bc-9ad}{bc-9ad}{2c^2-6bd}.
\]
Then we obtain
\begin{align*}
m\bigg(\begin{array}{c} \omega_0 \\ \theta_0 \end{array}\bigg) \big(\begin{array}{cc} \omega_0 & \theta_0 \end{array}\big)m^t
=\, & \frac{1}{3}\left(m^{-1}_{21} m \mb{3a}{b}{b}{c} m^t - m^{-1}_{11} m \mb{b}{c}{c}{3d} m^t\right) \omega_0' \\
&\,+ \frac{1}{3}\left(m^{-1}_{22} m \mb{3a}{b}{b}{c} m^t - m^{-1}_{12} m \mb{b}{c}{c}{3d} m^t\right) \theta_0' \\
&\,+ \frac{1}{9}m \mb{2b^2-6ac}{bc-9ad}{bc-9ad}{2c^2-6bd}m^t.
\end{align*}
Equivalently, this is
\begin{align*}
\bigg(\begin{array}{c} \omega_0' \\ \theta_0' \end{array}\bigg) \big(\begin{array}{cc} \omega_0' & \theta_0' \end{array}\big)
= &\, -\frac{1}{3}\det(m)^{-1} \left(m_{21} m \mb{3a}{b}{b}{c} m^t + m_{22} m \mb{b}{c}{c}{3d} m^t\right) \omega_0' \\
&\,+ \frac{1}{3}\det(m)^{-1}\left(m_{11} m \mb{3a}{b}{b}{c} m^t + m_{12} m \mb{b}{c}{c}{3d} m^t\right) \theta_0' \\
&\, + \frac{1}{9}m \mb{2b^2-6ac}{bc-9ad}{bc-9ad}{2c^2-6bd}m^t.
\end{align*}
Then
\[
\bigg(\begin{array}{c} \omega_0' \\ \theta_0' \end{array}\bigg) \big(\begin{array}{cc} \omega_0' & \theta_0' \end{array}\big) = \frac{1}{3} \mb{3a'}{b'}{b'}{c'} \theta_0 - \frac{1}{3}\mb{b'}{c'}{c'}{3d'} \omega_0 + \frac{1}{9} \mb{2b'^2-6a'c'}{b'c'-9a'd'}{b'c'-9a'd'}{2c'^2-6b'd'},
\]
where we used the definition of the action of $\GL_2$ on binary cubics and the equivariance of the Hessian of a binary cubic. The lemma then follows.
\end{proof}

We denote the condition (ii) of Proposition \ref{AIT} by $\dagger$ as below.
\[
\bigg(\begin{array}{c} b' \\-c'\end{array}\bigg) \equiv m \bigg(\begin{array}{c} b \\ -c \end{array}\bigg) \pmod{3}. \tag*{$\dagger$}
\]
Then the statement of Proposition \ref{AIT} follows immediately from

\begin{proposition} \label{Prop cubic}
	The following statements are equivalent.
	\begin{enumerate}[label=(\roman*)]
		\item The $R$-lattice $T_m$ spanned by $\{1,\omega'',\theta''\}$ is closed under multiplication.
		\item The $R$-lattice spanned by $\{1,\omega',\theta'\}$ is closed under multiplication and $\dagger$ holds.
		\item $m \cdot f$ has coefficients in $R$ and $\dagger$ holds.
	\end{enumerate}
\end{proposition}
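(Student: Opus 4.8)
The plan is to prove the chain of equivalences (i) $\Leftrightarrow$ (ii) $\Leftrightarrow$ (iii), from which Proposition~\ref{AIT} is immediate since the conjunction of (i) and (iii) of Proposition~\ref{AIT} is exactly statement (iii) here, while (i) here is the conclusion of Proposition~\ref{AIT}. The main tool is Lemma~\ref{lem:multTable}: the elements $\omega',\theta'$ satisfy the \emph{same} multiplication table as a good basis, but with coefficients $a',b',c',d'$. First I would establish (ii) $\Leftrightarrow$ (iii). If the $R$-lattice spanned by $\{1,\omega',\theta'\}$ is closed under multiplication, then by Lemma~\ref{lem:multTable} the products $\omega'\theta',\omega'^2,\theta'^2$ are $R$-combinations of $1,\omega',\theta'$ with the displayed coefficients; since $\{1,\omega',\theta'\}$ is an $R$-basis when viewed inside $T\otimes\operatorname{Frac}(R)$, comparing coefficients forces $a'd',\,a'c',a',b',\,b'd',c',d'\in R$. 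A short argument (e.g. using that $a'd'\in R$ together with $a'\in R$, or reading off enough relations) yields all of $a',b',c',d'\in R$, i.e.\ $m\cdot f$ has coefficients in $R$. The condition $\dagger$ is part of the hypothesis of (ii), so (ii) $\Rightarrow$ (iii); conversely if $m\cdot f$ has $R$-coefficients and $\dagger$ holds, Lemma~\ref{lem:multTable} directly exhibits the multiplication table of $\{1,\omega',\theta'\}$ with coefficients in $R$, giving closure, and again $\dagger$ is assumed, so (iii) $\Rightarrow$ (ii).

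Next I would prove (i) $\Leftrightarrow$ (ii). The key observation is the relation between $\{1,\omega'',\theta''\}$ and $\{1,\omega',\theta'\}$: by the definitions,
\[
\binom{\omega'}{\theta'} = \binom{\omega''}{\theta''} + \frac{1}{3}\left\{\binom{b'}{-c'} - m\binom{b}{-c}\right\},
\]
so $\omega'-\omega''$ and $\theta'-\theta''$ are scalar multiples (by elements of $\tfrac13 R$, a priori) of $1\in T$. Write $\binom{b'}{-c'}-m\binom{b}{-c} = \binom{3\alpha}{3\beta}$ with $\alpha,\beta\in\tfrac13 R$; condition $\dagger$ says precisely $\alpha,\beta\in R$. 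Thus $\omega' = \omega'' + \alpha$, $\theta' = \theta'' + \beta$ as elements of $T\otimes\operatorname{Frac}(R)$. When $\dagger$ holds, $\{1,\omega'',\theta''\}$ and $\{1,\omega',\theta'\}$ span the same $R$-lattice (they differ by a unipotent integral change of basis fixing $1$), so closure of one is equivalent to closure of the other; this gives (i) $\Leftrightarrow$ (ii) under the assumption $\dagger$, and since (ii) includes $\dagger$ we need only check that (i) $\Rightarrow\dagger$. For that, suppose $T_m=\langle 1,\omega'',\theta''\rangle$ is closed under multiplication; computing $\omega''\theta''$, $\omega''^2$, $\theta''^2$ via the bilinearity of multiplication and the good-basis table for $\{1,\omega,\theta\}$ (equivalently, working with $\omega_0,\theta_0$ as in the lemma), the products land in $T\otimes\operatorname{Frac}(R)$ with denominators controlled by $3$; closure in the $R$-lattice forces the relevant coefficients — which are exactly the entries of $m\binom{b}{-c}$ reduced against $\binom{b'}{-c'}$ — to be integral mod $3$, i.e.\ $\dagger$. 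I would phrase this using the trace-zero elements $\omega_0,\theta_0$ and their multiplication table, since there the obstruction to integrality is transparently the "$\tfrac13$"-terms.

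I expect the main obstacle to be the implication (i) $\Rightarrow\dagger$, i.e.\ extracting the mod-$3$ congruence from bare closure of $T_m$ without assuming anything about denominators: one has to argue carefully that the multiplication constants of $\{1,\omega'',\theta''\}$, which lie a priori only in $\tfrac13 R$ (or in $\operatorname{Frac}(R)$ with $3$-power denominators), are forced to be integral, and that their integrality is equivalent to $\dagger$ rather than to some weaker or stronger condition. This is where the characteristic-$0$ hypothesis on $R$ is used, since it guarantees $3$ is a nonzerodivisor and that the change-of-basis computations in $T\otimes\operatorname{Frac}(R)$ are meaningful. Once the bookkeeping with the factor of $\tfrac13$ is set up cleanly — ideally by reducing everything to the trace-zero table, where $\omega_0\theta_0,\omega_0^2,\theta_0^2$ have explicit $\tfrac19$- and $\tfrac13$-denominators — the three implications become short coefficient comparisons, and Proposition~\ref{Prop cubic}, hence Proposition~\ref{AIT}, follows.
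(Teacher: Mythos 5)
Your proposal is correct and takes essentially the same route as the paper: (ii)$\Leftrightarrow$(iii) follows from Lemma~\ref{lem:multTable}, (ii)$\Rightarrow$(i) because under $\dagger$ the bases $\{1,\omega'',\theta''\}$ and $\{1,\omega',\theta'\}$ span the same $R$-lattice, and (i)$\Rightarrow\dagger$ by comparing coefficients in the $\operatorname{Frac}(R)$-basis $\{1,\omega'',\theta''\}$, where the coefficients of $\omega''$ and $\theta''$ in $\omega''\theta''$ are exactly your $\tfrac13$-shifts $\alpha,\beta$. The paper resolves the step you flag as the main obstacle with the single identity $\omega''\theta''=B-\alpha\theta''-\beta\omega''$ (obtained by substituting $\omega''=\omega'-\alpha$, $\theta''=\theta'-\beta$ into the table of Lemma~\ref{lem:multTable}), so closure immediately forces $\alpha,\beta\in R$; note only this product is needed, since some of the other structure constants (e.g.\ the coefficient $a'$ of $\theta''$ in $\omega''^2$) a priori have $\det(m)$-denominators rather than $3$-power ones, contrary to your blanket denominator claim, but this does not affect the argument.
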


\begin{proof}
From Lemma \ref{lem:multTable}, it is clear that (ii) and (iii) are equivalent. It is also clear that (ii) implies (i). To prove that (i) implies (ii), we argue as follows. First, we define
\[
\bigg(\begin{array}{c} \delta_1 \\ \delta_2 \end{array}\bigg) = \frac{m}{3} \bigg(\begin{array}{c} b \\ -c \end{array}\bigg) + \frac{1}{3}\bigg(\begin{array}{c} -b' \\ c' \end{array}\bigg),
\]
so that $\omega' = \omega'' + \delta_1$ and $\theta' = \theta'' + \delta_2$. Observe that
\[
\omega'' \theta'' =  (\omega'-\delta_1)(\theta'-\delta_2)
= B -\delta_1 \theta'' -\delta_2 \omega''
\]
for some $B \in \textrm{Frac}(R)$. Thus if (i) holds, then $\delta_1, \delta_2 \in R$. Since $\delta_1, \delta_2 \in R$, the equalities $\omega'  = \omega'' + \delta_1$, $\theta' = \theta''+ \delta_2$ imply that the $R$-lattice spanned by $\{1,\omega',\theta'\}$ is closed under multiplication, so that (ii) holds. The result then follows.
\end{proof}

In the case where $R = \Z_p$, we can go further.

\begin{proposition}\label{prop:daggerHolds}
If $ \mathrm{Span}_{\Z_p}(1, \omega',\theta')$ is closed under multiplication, then $\dagger$ holds. Equivalently, if $m \cdot f$ has its coefficients in $\Z_p$, then $\dagger$ holds.
\end{proposition}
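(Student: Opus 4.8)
The plan is to separate the cases $p\neq 3$ and $p=3$. When $p\neq 3$ the claim is vacuous: $3$ is then a unit in $\Z_p$, so $3\Z_p=\Z_p$, and since $m$ is integral and $b,c,b',c'\in\Z_p$ (the last two because $m\cdot f$ is assumed to have coefficients in $\Z_p$), the vector $\binom{b'}{-c'}-m\binom{b}{-c}$ automatically lies in $\Z_p^2=3\Z_p^2$, which is exactly $\dagger$. So all the content is in the case $p=3$, which I will treat by contradiction.

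Recall from the proof of Proposition~\ref{Prop cubic} that $\binom{\delta_1}{\delta_2}=\tfrac13\big(m\binom{b}{-c}-\binom{b'}{-c'}\big)$, that $\omega'=\omega''+\delta_1$ and $\theta'=\theta''+\delta_2$, and that $\omega''=m_{11}\omega+m_{12}\theta$, $\theta''=m_{21}\omega+m_{22}\theta$ both lie in $T$. Since $m$ is integral and $b,c,b',c'\in\Z_3$, we get $\delta_1,\delta_2\in\tfrac13\Z_3$, and $\dagger$ is by definition the statement that $\delta_1,\delta_2\in\Z_3$. Suppose it fails, say $\delta_i\notin\Z_3$ for some $i\in\{1,2\}$; then $\delta_i\in\tfrac13\Z_3\setminus\Z_3$ forces $v_3(\delta_i)=-1$, hence $v_3(\delta_i^2)=-2$. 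Write $\eta=\omega'$, $\eta''=\omega''$ when $i=1$ and $\eta=\theta'$, $\eta''=\theta''$ when $i=2$, so that $\eta=\eta''+\delta_i$ with $\eta''\in T$.

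The key tool is the $\Q_3$-linear functional $\lambda\colon T\otimes\Q_3\to\Q_3$ that reads off the coefficient of $1$ in the basis $\{1,\omega,\theta\}$, i.e.\ $\lambda(1)=1$, $\lambda(\omega)=\lambda(\theta)=0$. It is $\Z_3$-valued on the ring $T$, and it annihilates $\omega''$ and $\theta''$, as these are $\Z_3$-combinations of $\omega,\theta$. Hence every element of $S:=\mathrm{Span}_{\Z_3}(1,\omega',\theta')$ has $\lambda$-value in $\Z_3+\Z_3\delta_1+\Z_3\delta_2\subseteq\tfrac13\Z_3$, so of $3$-adic valuation $\geq-1$. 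On the other hand $\eta^2=(\eta'')^2+2\delta_i\eta''+\delta_i^2$, and applying $\lambda$ gives $\lambda(\eta^2)=\lambda((\eta'')^2)+\delta_i^2$, where $\lambda((\eta'')^2)\in\Z_3$ because $(\eta'')^2\in T$; since $v_3(\delta_i^2)=-2$, this means $v_3(\lambda(\eta^2))=-2<-1$. Therefore $\eta^2\notin S$, contradicting the hypothesis that $S$ is closed under multiplication. So $\delta_1,\delta_2\in\Z_3$, which is $\dagger$; the equivalent formulation in terms of $m\cdot f$ having coefficients in $\Z_p$ is the same statement, because Lemma~\ref{lem:multTable} identifies closure of $S$ under multiplication with the integrality of $a',b',c',d'$.

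I expect the only real subtlety to be recognizing the correct invariant to track. Arguing directly with the coefficients $b,c,b',c'$ and clearing denominators yields only $\det(m)\big(\binom{b'}{-c'}-m\binom{b}{-c}\big)\in 3\Z_3^2$, which is useless when $3\mid\det(m)$. The way out is to stop thinking about coefficients and instead use the ring structure of $S$ directly, via the $3$-adic valuation of $\lambda(\eta^2)$; once one fixes on this, the remainder is a one-line valuation count.
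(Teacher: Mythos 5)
Your argument is correct, and it is genuinely different from the paper's. The paper proves Proposition \ref{prop:daggerHolds} softly, via the Cartan decomposition $m=k_1tk_2$ with $k_1,k_2\in\GL_2(\Z_p)$ and $t$ diagonal: the condition $\dagger$ is automatic for $\GL_2(\Z_p)$-matrices by the equivalence (i)$\Leftrightarrow$(iii) of Proposition \ref{Prop cubic}, it is an identity for diagonal matrices (where $b'=t_1b$, $c'=t_2c$), and it composes, so it propagates to $m$. You instead isolate the only nontrivial case $p=3$ and run a direct valuation argument: if $\dagger$ fails then some $\delta_i$ has $v_3(\delta_i)=-1$ (using $m\in M_2(\Z_3)$ and $b,c,b',c'\in\Z_3$ to bound $\delta_i\in\tfrac13\Z_3$), and the ``coefficient of $1$'' functional $\lambda$ detects that $\lambda\bigl((\eta'')^2+2\delta_i\eta''+\delta_i^2\bigr)$ has valuation $-2$, while $\lambda$ takes values of valuation $\geq -1$ on $\mathrm{Span}_{\Z_3}(1,\omega',\theta')$, contradicting closure under multiplication. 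This is a clean, self-contained computation that also explains transparently why the content is concentrated at $p=3$, whereas the paper's route recycles already-proved equivalences and needs no new invariant; both are of comparable length. Two points worth making explicit in a write-up: you use the two formulations of the hypothesis simultaneously (integrality of $b',c'$ for the bound on $\delta_i$, closure of $S$ for the contradiction), which is fine since Lemma \ref{lem:multTable} makes them equivalent, but the direction ``closure of $S$ implies $a',b',c',d'\in\Z_p$'' tacitly uses that $1,\omega',\theta'$ are $\Q_p$-linearly independent, i.e.\ $\det m\neq 0$; this, like the integrality of $m$, is implicit in the setting (the paper's proof likewise works with $m\in\GL_2(\Q_p)\cap M_2(\Z_p)$).
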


Although this proposition has nontrivial content only when $p=3$, we write down its proof for general $p$.

\begin{proof}
The idea is to use the Cartan decomposition of $m \in \GL_2(\Q_p) \cap M_2(\Z_p)$. That is, such an $m$ is a product $k_1 t k_2$ for some $k_1, k_2 \in \GL_2(\Z_p)$ and a diagonal element $t$ in $\GL_2(\Q_p) \cap M_2(\Z_p)$.

We first claim that if $m=k \in \GL_2(\Z_p)$, then $\dagger$ automatically holds for this $m$. To see this, note that because $k \in \GL_2(\Z_p)$, $\textrm{Span}_{\Z_p}(1,\omega, \theta) = \textrm{Span}_{\Z_p}(1,\omega'', \theta'')$. Thus that $\dagger$ holds follows from the equivalence of (i) and (iii) of Proposition \ref{Prop cubic}.

Now suppose that $m = t = \diag(t_1,t_2)$ is diagonal in $M_2(\Z_p) \cap \GL_2(\Z_p).$ Then for such $m$, $b' = t_1 b$ and $c' = t_2 c$, and it is clear that if $t \cdot f$ has coefficients in $\Z_p$, then $\dagger$ holds.

Now suppose that $m = k_1 t k_2$ and that $m \cdot f$ has coefficients in $\Z_p$. It follows that $t k_2 \cdot f$ has coefficients in $\Z_p$. Thus, from what has been said, $\dagger$ holds for $m'=t k_2$. Applying $k_1$, it follows that $\dagger$ holds for $m$, as desired.
\end{proof}

 Recall that an order is a subring of a $K$-algebra, where $K$ is a field and $R$ is an integral domain in $K$, which is a full $R$-lattice. We note the following corollary of Lemma \ref{lem:multTable}.

\begin{corollary}\label{cor:mfint}
Set $R = \Z_p$, and let the binary cubic $f(w,z)$ correspond to the maximal order $T$ in the \'etale $\Q_p$-algebra $T \otimes \Q_p$. Assume that for $m \in \GL_2(\Q_p)$, the coefficients of $m \cdot f$ lie in $\Z_p$. Then $m \in M_2(\Z_p)$.
\end{corollary}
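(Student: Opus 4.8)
The plan is to combine the arithmetic invariant theory already established — in particular Lemma \ref{lem:multTable} and Proposition \ref{prop:daggerHolds} — with the fact that $T$ is the \emph{maximal} order, and then run a Cartan-decomposition argument to force $m$ to have integral entries. Write $m = k_1 t k_2$ with $k_1, k_2 \in \GL_2(\Z_p)$ and $t = \diag(p^{a_1}, p^{a_2})$ with $a_1 \ge a_2$ (the usual $p$-adic Cartan decomposition of an element of $\GL_2(\Q_p)$; a priori we do not yet know the $a_i \ge 0$). Since $k_1 \in \GL_2(\Z_p)$ acts invertibly on binary cubics over $\Z_p$, the coefficients of $m \cdot f$ lie in $\Z_p$ if and only if the coefficients of $(t k_2)\cdot f$ do; and replacing $f$ by the binary cubic $k_2 \cdot f$ — which, since $k_2 \in \GL_2(\Z_p)$, still has $\Z_p$-coefficients and still corresponds to the same maximal order $T$ (up to $\GL_2(\Z_p)$-equivalence, hence to an isomorphic, still maximal, order) — we reduce to the case $m = t$ diagonal.

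So assume $m = \diag(p^{a_1},p^{a_2})$, $a_1 \ge a_2$, and that $t \cdot f$ has $\Z_p$-coefficients; I want to conclude $a_1, a_2 \ge 0$. First, Proposition \ref{prop:daggerHolds} tells us that $\dagger$ holds for $m = t$, so by Proposition \ref{Prop cubic} the lattice $T_m = \mathrm{Span}_{\Z_p}(1, \omega'', \theta'')$ is closed under multiplication, i.e.\ it is a suborder of $T \otimes \Q_p$. By construction $T_m \subseteq \Z_p + p^{a_2}(\Z_p \omega + \Z_p \theta)$; if $a_2 < 0$, then $T_m$ would be an order strictly containing the full lattice $\Z_p + (\Z_p\omega + \Z_p\theta) = T$ inside $T \otimes \Q_p$ (at least after also checking $a_1 \ge a_2$ keeps both generators in $p^{a_2} T$), contradicting maximality of $T$. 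Hence $a_2 \ge 0$. For $a_1$: the discriminant of $t \cdot f$ equals $\det(t)^{\,?}$ times that of $f$ — more precisely, from the $\GL_2$-equivariance one gets $\operatorname{disc}(t \cdot f) = p^{2(a_1+a_2)}\operatorname{disc}(f)$ up to a unit, because $\operatorname{disc}$ of a binary cubic transforms by $\det(g)^2$ under the action \eqref{GL2 action}. Since $t \cdot f$ has integral coefficients its discriminant is in $\Z_p$, while $\operatorname{disc}(f)$ is a $p$-adic unit (the maximal order of an étale $\Q_p$-algebra $T\otimes\Q_p$ can fail to have unit discriminant only when the algebra is ramified — so here I should instead argue index-theoretically: $[\,T : T_m\,]$ and the cokernel of $t$ on $\Z_p^2$ match up, and since $T_m \subseteq T$ forces the index to be a nonnegative power of $p$, we get $a_1 + a_2 \ge 0$, and combined with $a_2 \ge 0$ and the index being exactly $p^{a_1 + a_2}$... ). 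Concretely: $T_m$ has index $p^{a_1+a_2}$ in $T$ inside $T$ itself only if $a_1+a_2 \ge 0$; together with $a_2 \ge 0$ this still allows $a_1 < 0$, so one really does need the $a_2 \ge 0$ argument applied to the \emph{other} generator, or to the reversed ordering, to pin down $a_1 \ge 0$ as well. I would handle this by observing that once $a_2 \ge 0$ we have $\omega'' = p^{a_1}\omega \in T_m \subseteq T$, and if $a_1 < 0$ this puts an element of $T \otimes \Q_p$ outside $T$ into the order $T_m$ — again impossible since $T_m \subseteq T$. Hence $a_1, a_2 \ge 0$, so $t \in M_2(\Z_p)$, and therefore $m = k_1 t k_2 \in M_2(\Z_p)$.

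The main obstacle is the bookkeeping in the diagonal case: making precise that "$\dagger$ holds $\Rightarrow$ $T_m$ is a genuine suborder of $T\otimes\Q_p$ \emph{contained in} $T$", and then extracting from $T_m \subseteq T$ together with the shape $T_m \supseteq$ (scaled lattice) the inequalities $a_i \ge 0$ on \emph{both} diagonal entries rather than just on their sum. Everything else — the reduction to the diagonal case via the Cartan decomposition, the invocation of Proposition \ref{prop:daggerHolds}, and the equivariance of the cubic-ring/binary-cubic correspondence from Lemma \ref{lem:multTable} — is routine and already set up in the preceding results. The one subtlety to state carefully is that replacing $f$ by $k_2 \cdot f$ is harmless precisely because $k_2 \in \GL_2(\Z_p)$ preserves both integrality of coefficients and the isomorphism class (indeed the $\GL_2(\Z_p)$-equivalence class) of the associated cubic ring, so maximality of the corresponding order is preserved.
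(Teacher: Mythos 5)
Your plan arrives at the right conclusion but by a genuinely more roundabout route than the paper, and one intermediate step as written is false. The paper's proof needs no Cartan decomposition, no case analysis, and none of the $\dagger$ machinery: since $m \cdot f$ has coefficients in $\Z_p$, Lemma \ref{lem:multTable} shows that $\omega' = m_{11}\omega + m_{12}\theta + \delta_1$ and $\theta' = m_{21}\omega + m_{22}\theta + \delta_2$ satisfy the integral multiplication table attached to $m\cdot f$, so $\mathrm{Span}_{\Z_p}(1,\omega',\theta')$ is an order of $T\otimes\Q_p$; maximality of $T$ forces $\omega',\theta' \in T$, and reading off the $\omega$- and $\theta$-coordinates of $\omega'$ and $\theta'$ in the good basis $\{1,\omega,\theta\}$ gives $m_{11},m_{12},m_{21},m_{22} \in \Z_p$ at once, for arbitrary $m \in \GL_2(\Q_p)$. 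So your reduction to a diagonal $t=\diag(p^{a_1},p^{a_2})$ and the appeal to Proposition \ref{prop:daggerHolds} and Proposition \ref{Prop cubic} are not needed; what the extra structure buys you is only the explicit shape $\omega''=p^{a_1}\omega$, $\theta''=p^{a_2}\theta$, at the cost of the bookkeeping you yourself identify as the main obstacle.

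The concrete flaw is in your treatment of $a_2$: if $a_1 > 0 > a_2$, the lattice $T_m = \Z_p + \Z_p p^{a_1}\omega + \Z_p p^{a_2}\theta$ does \emph{not} contain $T$ (it misses $\omega$), so ``an order strictly containing $T$, contradicting maximality'' is not a valid contradiction. The fact you actually need --- and do invoke, without justification, in the $a_1$ step --- is that every order of the \'etale algebra $T\otimes\Q_p$ consists of elements integral over $\Z_p$ and hence is contained in the maximal order, so $T_m \subseteq T$; then $p^{a_2}\theta \in T$ forces $a_2 \ge 0$ and $p^{a_1}\omega \in T$ forces $a_1 \ge 0$, uniformly and with no need for the discriminant or index detours (the discriminant claim is indeed wrong when $T\otimes\Q_p$ is ramified, as you noticed, and the index argument only controls $a_1+a_2$). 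Once that single containment is stated, your diagonal argument closes; it is the same maximality input as the paper's proof, just applied after an unnecessary reduction.
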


\begin{proof}
Suppose that $\displaystyle m = \mm{m_{11}}{m_{12}}{m_{21}}{m_{22}}$ and $T$ has the good basis $\{1, \omega, \theta \}$. We have $\omega' = m_{11}\omega + m_{12} \theta + \delta_1$ and $\theta' = m_{21} \omega+ m_{22}\theta + \delta_2$. Since $m \cdot f$ has coefficients in $\Z_p$, Lemma \ref{lem:multTable} implies that $\mathrm{Span}_{\Z_p}\big(1,\omega',\theta'\big)$ is closed under multiplication. But then, because $T$ is maximal by assumption, we must have $\omega', \theta' \in T$. It follows that all entries of $m$ are in $\Z_p$, as desired.
\end{proof}


\section{The Fourier coefficient of the approximate basic function}\label{sec:ABF}
In this section, we explain the computation of the left-hand side of \eqref{mainequation}. For $t \in \GL_1$ and $h \in \GL_2 \simeq M$ the Levi of the Heisenberg parabolic of $G_2$, define
\[
D_{\chi}(t,h) =  \int_{N(\Q_p)}\chi(n)\Delta(t,nh)\,dn.
\]
Then by the Iwasawa decomposition, the left-hand side of \eqref{mainequation} is
\[
D(s) = \int_{\GL_1(\Q_p)\times \GL_2(\Q_p)}\delta_P^{-1}(h)|t|^{s+2}D_{\chi}(t,h)L(h v_0)\,dh\,dt.
\]
For $p \geq 5$, the $D_\chi(t,h)$ is computed in \cite[Proposition 5.7]{PollackG2}. Below we explain that the expression obtained for $D_\chi(t,h)$ in \emph{loc cit} continues to hold for $p=2$ and $p=3$.

We now recall various notations from \cite{PollackG2} that we need to state for the computation of $D_\chi(t,h)$. First, let
\[
f_{\textrm{max}}(w, z) = aw^3 + bw^2z +cwz^2 + dz^3
\]
be a binary cubic form corresponding to the maximal order
\[
\mathcal{O}_E =\Z_p \times \Z_p \times \Z_p \in \Q_p^3,
\]
so that $f_{\textrm{max}}$ is some $\GL_2(\Z_p)$ translate of $wz(w+z)$. Let $\{1, \omega,\theta\}$ be the good basis of $\mathcal{O}_E$ associated to $f_{\textrm{max}}$. For $x = \mm{\alpha}{\beta}{\gamma}{\delta} \in \GL_2(\Q_p)$, $T(x)$ denotes the $\Z_p$-module spanned by $\{1, \delta \omega - \beta \theta,-\gamma \omega + \alpha \theta\}$. Hence $T(x) = T_{\widetilde{x}}$ in the notation of Section \ref{Sect. binary cubic}, where $\widetilde{x}$ is as defined in Section~\ref{subsec:binary cubics}. Note that by the results of Section \ref{Sect. binary cubic}, if $T(x)$ is closed under multiplication, then $x \in M_2(\Z_p)$ and $\widetilde{x} \cdot f_{\textrm{max}} = f_{\textrm{max}} \cdot x$ has its coefficients in $\Z_p$, and vice versa.

For a general binary cubic form $\Omega$ with coefficients in $\Z_p$, define $\mathcal{N}(\Omega)$ to be the number of $0$'s of $\Omega$ in $\textbf{P}^1(\F_p)$. Also, for an element $h \in \GL_2(\Q_p)$, define $\val(h)\in \Z$ to be the largest integer $n$ so that $p^{-n}h \in M_2(\Z_p)$.

\begin{proposition}
Define $x_0(h)$ by $x_0(h) = p^{-\val(h)}h$, and set $\lambda = \det(h)/t.$ Write $D_{\chi}'(\lambda, h) = D_{\chi}(t,h)$, that is, $D_{\chi}'$ is the same function as $D_{\chi}$, except that it is expressed in terms of the new variables $\lambda$ and $h$. Further, let
\[
\epsilon(x_0(h)) = \left\{
    \begin{array}{ll}
         1 & \mbox{if} ~ ~ x_0(h) \in \GL_2(\Z_p), \\
         2 & \mbox{if} ~ ~ x_0(h) \notin \GL_2(\Z_p).
    \end{array}
\right.
\]
Then
\begin{align*}
D_{\chi}'(\lambda, h) &= |\det(\lambda^{-1}h)|^{-1}\charf\big(h\in M_2(\Z_p), \val(\lambda^{-1}h) \in \{0,1\},  T(x_0(h)) ~ \text{a ring}\big)
\\ &\,\,\, \times  \left\{
    \begin{array}{ll}
         1 & \mbox{if} ~ ~ \val(\lambda^{-1}h)=0 \\
         N(f_{\operatorname{max}}) - \epsilon(x_0(h)) & \mbox{if} ~ ~ \val(\lambda^{-1}h)=1
    \end{array}
\right\}.
\end{align*}
\end{proposition}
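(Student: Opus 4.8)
The plan is to compute $D_\chi(t,h) = \int_{N(\Q_p)}\chi(n)\Delta(t,nh)\,dn$ directly by unwinding the definition of the approximate basic function $\Delta(t,nh) = \charf(t\cdot r_7(nh)\in\End(V_7(\Z_p)))$, and then to translate the resulting integrality conditions on $G_2(\Q_p)$ into the cubic-ring language of Section~\ref{Sect. binary cubic}. First I would recall (from Section~5 of \cite{PollackG2}) the explicit matrix of $r_7(n)$ for $n$ in the Heisenberg unipotent $N$, organized so that the abelianization $N/Z\simeq W$ acts by a binary cubic and the center $Z$ by a quadratic. The condition $t\cdot r_7(nh)\in\End(V_7(\Z_p))$ then becomes: (a) $t\cdot r_7(h)$ maps $V_7(\Z_p)$ into a suitable lattice, which forces $h\in M_2(\Z_p)$ up to scaling, and (b) a congruence/divisibility condition relating the coordinates of $\bar n\in W(\Q_p)$ to the binary cubic attached to $h$. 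Integrating $\chi(n)$ over the fibers of $n\mapsto \langle v_E,\bar n\rangle$ kills the center integral (producing the $\delta$-function that pins down $\val(\lambda^{-1}h)$) and leaves a finite sum over $\bar n\pmod p$ of a nontrivial additive character, which is where the count $\mathcal N(f_{\max})$ and the correction $\epsilon(x_0(h))$ will appear.

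The key steps, in order, are: (1) write out $r_7$ on $N$ explicitly in the basis adapted to $V_7(\Z)$; (2) reduce the condition $\Delta(t,nh)=1$ to the pair of conditions just described, identifying the relevant binary cubic as $f_{\max}\cdot x_0(h)$ (times a power of $p$), so that Corollary~\ref{cor:mfint} and Proposition~\ref{prop:daggerHolds} apply; (3) perform the integral over $Z(\Q_p)$ against $\chi|_Z$, obtaining the characteristic function $\charf(\val(\lambda^{-1}h)\in\{0,1\})$ together with the determinant-normalization $|\det(\lambda^{-1}h)|^{-1}$; (4) perform the remaining integral over $N/Z\simeq W$, which for $\val(\lambda^{-1}h)=0$ gives a single lattice point and hence the value $1$, and for $\val(\lambda^{-1}h)=1$ becomes a character sum over $\mathbf{P}^1(\F_p)$ whose value is $\mathcal N(f_{\max}) - \epsilon(x_0(h))$; and (5) check that the condition "$T(x_0(h))$ is a ring" is exactly the surviving integrality constraint, using the dictionary $T(x)=T_{\widetilde x}$ and the results of Section~\ref{Sect. binary cubic} cited in the paragraph preceding the proposition. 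The point of the whole argument is that for $p\geq 5$ this is \cite[Proposition~5.7]{PollackG2}, so I only need to re-examine each step at $p=2,3$.

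The main obstacle is step (4) at $p=3$: the character-sum evaluation relies on the relationship between the number of $\F_p$-points of the reduced binary cubic and the divisibility pattern of its coordinates, and the condition $\dagger$ in Proposition~\ref{AIT} has genuine content precisely when $p=3$. Here one must be careful that the map from $h$ to its associated cubic is still governed by the arithmetic invariant theory established in Section~\ref{Sect. binary cubic} --- in particular that $T(x_0(h))$ being closed under multiplication is equivalent to $f_{\max}\cdot x_0(h)$ having $\Z_p$-coefficients \emph{and} satisfying $\dagger$, but that by Proposition~\ref{prop:daggerHolds} the latter is automatic once the former holds. I expect that once this equivalence is invoked, the character sum at $p=3$ evaluates by the same mechanism as at larger primes, because $\epsilon(x_0(h))$ records exactly whether $x_0(h)$ reduces to an invertible matrix mod $p$, equivalently whether the reduced cubic $f_{\max}\cdot x_0(h)\bmod p$ is a $\GL_2(\F_p)$-translate of $wz(w+z)$ (three distinct roots, $\mathcal N=3$, $\epsilon=1$) or has a repeated structure ($\epsilon=2$). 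At $p=2$ the cubic $wz(w+z)$ is $w z(w+z) = w^2 z + w z^2$, which still has three distinct points in $\mathbf{P}^1(\F_2)$ (namely $[1:0],[0:1],[1:1]$), so $\mathcal N(f_{\max})=3$ as well, and no new degeneracy occurs; the only real work is verifying that the explicit formula for $r_7$ involves no denominators divisible by $2$ or $3$ beyond those already accounted for by the factor $\tfrac13$ in the identification $W\simeq N/Z$, which is handled by Proposition~\ref{AIT}(ii).
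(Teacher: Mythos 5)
Your proposal is correct and takes essentially the same approach as the paper: both treat the statement as Proposition 5.7 of \cite{PollackG2} whose $p\geq 5$ computation carries over, and both isolate the only genuinely new content at $p=2,3$ as (a) the integrality statement, handled via Corollary \ref{cor:mfint}, and (b) the mod-$3$ condition $\dagger$, which Proposition \ref{prop:daggerHolds} shows is automatic, so that ``$T(x_0(h))$ is a ring'' is equivalent to $f_{\max}\cdot x_0(h)$ being $\Z_p$-integral. The only cosmetic difference is that the paper derives $h\in M_2(\Z_p)$ from the right-invariance of $\Delta$ under $G_2(\Z_p)\cap N$ via a change of variables, rather than from an explicit description of $r_7$ on $N$, and otherwise defers the character-sum evaluation verbatim to \emph{loc.\ cit.}
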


\begin{proof}
This is essentially Proposition 5.7 in \cite{PollackG2}. The proof carries over line-by-line except one minor change. To aid the reader in checking this, we give some of the omitted details from \emph{loc cit} to clarify that the result continues to hold for $p=2,3$.
	
First, we show that $D_\chi(t,h) \neq 0$ implies $h \in M_2(\Z_p)$. We have $D_{\chi}(t,h) = \int_{N}{\psi(\langle \omega, n \rangle) \Delta(t, nh)\,dn}$, where $\omega$ is the element of $W$ that corresponds to $f_{\textrm{max}}$. By the change of variable $n \mapsto hnh^{-1}$, we find that up to positive constant coming from the change in measure,
\[
D_{\chi}(t,h) = \int_{N}{\psi(\langle \omega, hnh^{-1} \rangle)\Delta(t,hn)dn}.
\]
Now, $\Delta$ is right-invariant under $G_2(\Z_p)$, so if $u_0 \in G_2(\Z_p) \cap N$, then $\Delta(t,hn u_0) = \Delta(t,hn)$. Then by the change of variable $n \mapsto n u_0$ in $D_\chi(t,h)$, one finds that
\[
D_{\chi}(t,h) = \psi(\langle \omega, hu_0h^{-1}\rangle) D_{\chi}(t,h).
\]
Thus, for $D_\chi(t,h)$ to be nonzero, one must have $\langle \omega, hu_0h^{-1} \rangle \in \Z_p$  for every $u_0 \in N \cap G_2(\Z_p)$. It follows that $\omega \cdot h$ corresponds to a binary cubic form with $\Z_p$ integral coefficients, and thus $h \in M_2(\Z_p)$ by Corollary \ref{cor:mfint}.

Let us remark upon the one aspect of the proof which is ever so slightly different from the proof of Proposition 5.7 in \cite{PollackG2}. In \emph{loc cit}, one verifies that if $D_\chi(t,h)$ nonzero, then $f_{\textrm{max}} \cdot x_0(h)$ has coefficients in $\Z_p$. By Proposition \ref{prop:daggerHolds}, one concludes that $T(x_0(h))$ is a ring.

The rest of the proof is as that of Proposition 5.7 in \cite{PollackG2}.
\end{proof}


\section{Non-archimedean zeta integral} \label{sec:non-arch}

In this section we compute the right-hand side of \eqref{mainequation}. In the case when $p \geq 5$, the calculation is done in \cite{PollackG2}, so the new work is for $p=2$ and $3$. Still, many computations are similar to the ones in the proof for the case $p\geq 5$.


\subsection{The computation of \texorpdfstring{$\Phi_{p,\chi}$}{}}\label{subsect:compute Phip}
To compute the right-hand side of \eqref{mainequation}, we first compute the function $\Phi_{p, \chi}(t,g)$ in \eqref{Phichi}. The computation of this function is different from the one in Lemma 5.6 of \cite{PollackG2}.

\begin{lemma}\label{Lemma for Phichi}
	Suppose that $\displaystyle h = \mm{a}{b}{c}{d}$ is in the Heisenberg Levi, so that $h$ takes $e_1$ to $a e_1 + c e_3^*$ and $e_3^*$ to $b e_1 + d e_3^*$. Let $f_0(w,z) = w^2z+wz^2$. Set $\lambda = \det(h)/t$ and $h' = \frac{1}{\lambda} \widetilde{h} = \lambda^{-1} \mm{d}{-b}{-c}{a}$. Set $f_1(w,z) = h' \cdot f_0(w,z)$, and write $f_i(w,z) = \alpha_i w^3 + \beta_i w^2 z + \gamma_i wz^2 + \delta_i z^3$ for $i = 0,1$. Then
	\[
	\Phi_{p,\chi}(t,h)=|\lambda|A_0(\lambda, h),
	\]
	where $A_0(\lambda, h)$ is the characteristic function of the quantities
	\begin{itemize}
	\setlength\itemsep{0.25em}
		\item $\lambda \in \mathbb Z $,
		\item $h' \in \mathbb Z$,
		\item $h' \cdot f_0(w,z) = \det(h')^{-1} f_0((w,z) h') \in \mathbb Z $,
		\item $\bigg(\begin{array}{c} \beta_1 \\-\gamma_1\end{array}\bigg) \equiv h' \bigg(\begin{array}{c} \beta_0 \\ -\gamma_0 \end{array}\bigg) \pmod{3}$.
	\end{itemize}
\end{lemma}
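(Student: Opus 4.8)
The plan is to unwind the definition of $\Phi_{p,\chi}(t,h)$ in \eqref{Phichi} and reduce it to an explicit integral over the abelian quotient $N^{0,E}(\Q_p)\backslash N(\Q_p)$, which is one-dimensional. First I would observe that since $h$ lies in the Heisenberg Levi $M\simeq\GL_2$, we can conjugate the unipotent integration variable: writing $g = h$ and using the change of variables $n \mapsto h n h^{-1}$, the character $\chi$ gets twisted by the action of $h$ on $N/Z$, which on the binary cubic side is exactly the action $f_0 \mapsto h'\cdot f_0$ up to the determinant factor built into $\lambda = \det(h)/t$. The element $\widetilde{v_E} = \epsilon_1\wedge(e_1+e_3^*)$ and the line $\langle E_{13}\rangle$ are related by $\gamma_0$, and the condition $n \in N^{0,E}$ is precisely that the binary-cubic pairing $\langle v_E,\overline n\rangle$ vanishes, so the quotient $N^{0,E}\backslash N$ is parametrized by a single coordinate, say $r\in\Q_p$, corresponding to moving in the $\langle v_E,-\rangle\neq 0$ direction.

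Next I would compute $\Phi_p(t h^{-1} n^{-1}\widetilde{v_E})$ explicitly as a function of $r$ and $t$, using the known action of $M$ on $\Theta$ recorded in Section~\ref{subsec:Heis} and the action of $N$ on $\wedge^2\Theta$. The Schwartz function $\Phi_p$ is the characteristic function of $\wedge^2\Theta_0\otimes\Z_p$, so this becomes the characteristic function of a lattice condition on the coefficients, which will be linear in $r$ with coefficients that are polynomials in the entries of $h'$ (equivalently the coefficients $\alpha_1,\beta_1,\gamma_1,\delta_1$ of $f_1 = h'\cdot f_0$) scaled by $\lambda$. Then $\Phi_{p,\chi}(t,h) = \int_{\Q_p}\psi(c\, r)\cdot \mathbf{1}_{\Z_p}(\text{affine-linear in }r)\,dr$ for an appropriate linear form; such an integral evaluates to $|\lambda|$ times the characteristic function that (a) the constant term of the lattice condition lies in $\Z_p$ — this produces the conditions $\lambda\in\Z_p$, $h'\in\Z_p$ (i.e. $h'\in M_2(\Z_p)$), and $h'\cdot f_0\in\Z_p$ — and (b) the coefficient multiplying $r$ inside $\psi$, after integrating out the part of $r$ allowed to vary over $\Z_p$, forces an integrality that, when $p=3$, gives the congruence condition $\binom{\beta_1}{-\gamma_1}\equiv h'\binom{\beta_0}{-\gamma_0}\pmod 3$. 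The $p=3$ subtlety is exactly the $1/3$ factors appearing in the exponential identification of $W$ with $N/Z$ at the end of Section~\ref{subsec:Heis} (the $u_2/3$ and $u_3/3$ terms), so the $\dagger$ congruence emerges from the arithmetic invariant theory of Section~\ref{Sect. binary cubic}, specifically Proposition~\ref{AIT} and Proposition~\ref{prop:daggerHolds}.

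I would organize the write-up so that the integral is first brought to the form $|\lambda|\int_{\Q_p}\psi(\text{linear})\mathbf{1}(\text{lattice})\,dr$, then invoke the standard fact that $\int_{\Q_p}\psi(ar)\mathbf{1}_{b+\Z_p}(r)\,dr$ is $\psi(ab)\mathbf{1}_{\Z_p}(a)$ (or more precisely the corresponding statement for a general affine-linear lattice constraint), and finally translate the resulting integrality statements back into the four bullet-point conditions defining $A_0(\lambda,h)$. The factor $|\lambda|$ in front comes from the Jacobian of the substitution $n\mapsto hnh^{-1}$ combined with the measure normalization on $N^{0,E}\backslash N$; I would track this carefully since it is the coefficient asserted in the lemma.

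The main obstacle I anticipate is the bookkeeping at $p=3$: one must verify that the linear form in $r$ multiplying $\psi$, together with the lattice condition, produces \emph{exactly} the congruence $\dagger$ modulo $3$ and not some coarser or finer condition. This requires being precise about the $1/3$-normalizations in the identification $W\simeq N/Z$ (the coefficients $u_2/3, u_3/3$ in Section~\ref{subsec:Heis}) and about which coordinate of $N/Z$ corresponds to the $N^{0,E}$-direction versus the transverse direction. Once that alignment is pinned down, comparing with Lemma~\ref{lem:multTable} and Proposition~\ref{AIT} shows the conditions $h'\cdot f_0\in M_2(\Z_p)$-coefficients plus $\dagger$ are precisely what the integral detects, matching the statement; for $p=2$ and $p\ge 5$ the $\dagger$ condition is automatic (by Proposition~\ref{prop:daggerHolds} it has content only at $p=3$), consistent with the lemma being uniform in $p$.
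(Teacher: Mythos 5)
Your overall strategy---reduce \eqref{Phichi} to a one-dimensional integral over $N^{0,E}(\Q_p)\backslash N(\Q_p)$, read off lattice conditions from $\Phi_p(th^{-1}n^{-1}\widetilde{v_E})$, and evaluate the remaining character integral to produce the factor $|\lambda|$---is the paper's strategy. But the step you yourself flag as the main obstacle, the $p=3$ bookkeeping, is resolved in the wrong place, and as written your plan would not produce the fourth bullet. The transverse direction carries no mod-$3$ information at all: with $n=\exp(u_1E_{12}+u_2v_1+u_3\delta_3+u_4E_{23})$ one has exactly $\chi(n)=\psi(u_2-u_3)$ (the $1/3$ in the symplectic pairing cancels the $3$'s from the identification $W\simeq N/Z$), and $n^{-1}\widetilde{v_E}=\widetilde{v_E}+(u_2-u_3)E_{13}$, so the only $n$-dependent lattice condition is $(u_2-u_3)/\lambda\in\Z_p$. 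Hence the integral over the quotient is $\int_{\Q_p}\psi(u)\,\mathbf{1}_{\lambda\Z_p}(u)\,du=|\lambda|\,\charf(\lambda\in\Z_p)$ times $n$-independent conditions: the character integral yields precisely the factor $|\lambda|$ and the first bullet, nothing more. (Note also that $\lambda\in\Z_p$ is not a consequence of your ``constant term'' conditions, contrary to your item (a): take $h=p^{-1}1_2$ and $\lambda=p^{-1}$, so $h'=1_2$ is integral while $\lambda$ is not.)

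The congruence $\dagger$ in fact comes from the $n$-independent part of $th^{-1}n^{-1}\widetilde{v_E}$, i.e.\ from $\lambda^{-1}\det(h)\,h^{-1}\cdot\widetilde{v_E}$. The point is the decomposition \eqref{vE tilde} of $\widetilde{v_E}$ into its $\mathfrak{g}_2$-component $\tfrac13(\{1,1,1\}_{e_1}+\{1,1,1\}_{e_3^*})$ and the complementary piece $\tfrac13(\{2,-1,-1\}_{e_1}+\{-1,2,-1\}_{e_3^*})$: the Levi moves the first piece by $f_0\mapsto h'\cdot f_0$ but the second piece by the raw entries of $h'$. Re-expanding the result in a genuine $\Z_p$-basis of $\wedge^2\Theta_0\otimes\Z_p$ (namely $\{1,0,0\}_{e_1},\{0,1,0\}_{e_1},\{1,1,1\}_{e_1}$, their $e_3^*$-analogues, and $E_{12},E_{23},E_{13}$), the coordinates along $\{1,1,1\}_{e_1}$ and $\{1,1,1\}_{e_3^*}$ are $\tfrac13(\beta_1-\lambda^{-1}d-\lambda^{-1}b)$ and $\tfrac13(\gamma_1-\lambda^{-1}c-\lambda^{-1}a)$; their integrality is exactly the fourth bullet (vacuous for $p\neq3$), while the remaining coordinates give the second and third bullets. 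So the $1/3$'s responsible for $\dagger$ sit in the decomposition of $\widetilde{v_E}$, not in the character or in the transverse integral, and no appeal to Proposition~\ref{AIT} or Proposition~\ref{prop:daggerHolds} is needed inside this proof (Section~\ref{Sect. binary cubic} enters only afterwards, in Corollary~\ref{cor:A0}). Finally, the conjugation $n\mapsto hnh^{-1}$ you propose is an unnecessary complication---$N^{0,E}$ is not stable under the Levi, so the domain and measure would have to be retracked---whereas one can simply apply $th^{-1}$ to $n^{-1}\widetilde{v_E}$ directly, as the paper does.
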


Before we prove this lemma, we introduce some notation and state a corollary of it. For a cubic ring $T$ over $\Z_p$, the largest integer $c$ so that $T=\Z_p+p^cT_0$ for a cubic ring $T_0$ over $\Z_p$ is called the \emph{$p$-adic content} of $T$ and is denoted by $c(T)$. If $T$ corresponds to the binary cubic $g$, then the $p$-adic content of $T$ is the largest integer $c$ so that $p^{-c}g$ has coefficients in $\Z_p$.  Let $x=\mm{\alpha}{\beta}{\gamma}{\delta}\in \GL(2,\Q_p)$. Recall that $T(x)$ denotes the $\Z_p$-module spanned by $\{1, \delta\omega-\beta\theta, -\gamma\omega + \alpha\theta\}$, where $\{1, \omega, \theta\}$ is the good basis of the fixed maximal order.

\begin{corollary}\label{cor:A0}
We have
\[
A_0(\lambda,h)={\rm char}\big(\lambda \in \Z_p, T(\lambda^{-1}h) \text{ is a ring}\big)={\rm char}\left(\lambda \in \Z_p, \lambda\mid p^{c\left(T(h)\right)}\right).
\]
\end{corollary}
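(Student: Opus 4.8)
The plan is to unwind the two characteristic functions in Lemma~\ref{Lemma for Phichi} into the language of cubic rings developed in Section~\ref{Sect. binary cubic}. Write $x = \lambda^{-1}h$; then by construction $\widetilde{x} = \lambda \cdot h'$ up to the scalar $\det$, so the matrix $h'$ that appears in Lemma~\ref{Lemma for Phichi} is, after clearing the factor of $\lambda$, exactly the matrix $\widetilde{x}$ (equivalently $m$ in the notation of Proposition~\ref{AIT}) acting on the fixed binary cubic $f_{\max}$ attached to the maximal order $\mathcal{O}_E = \Z_p^3$. The first step is to observe that the four bulleted conditions cutting out $A_0(\lambda,h)$ say precisely: (a) $\lambda \in \Z_p$; (b) $h' \in M_2(\Z_p)$, i.e. $\widetilde{x} \in M_2(\Z_p)$, equivalently $x \in M_2(\Z_p)$; (c) $\widetilde{x} \cdot f_{\max}$ has $\Z_p$-coefficients; and (d) the congruence $\dagger$ holds for $m = \widetilde{x}$. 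By Proposition~\ref{AIT} (with $R = \Z_p$), conditions (b)--(d) together are equivalent to $T_{\widetilde{x}} = T(x) = T(\lambda^{-1}h)$ being closed under multiplication. In fact Proposition~\ref{prop:daggerHolds} shows (d) is automatic once (c) holds, so conditions (c) and (d) can be merged, but it is cleanest just to cite Proposition~\ref{AIT} directly. This gives the first equality
\[
A_0(\lambda,h) = {\rm char}\big(\lambda \in \Z_p,\ T(\lambda^{-1}h)\text{ is a ring}\big).
\]

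For the second equality I would argue as follows. Suppose first that $A_0(\lambda,h) \ne 0$. Then $\lambda \in \Z_p$ and $T(\lambda^{-1}h)$ is a ring; in particular $\lambda^{-1}h \in M_2(\Z_p)$, so $h \in M_2(\Z_p)$ and $T(h)$ makes sense. One has $\lambda^{-1}h = \lambda^{-1}\cdot h$ with $\lambda^{-1}$ a scalar, and scaling the matrix by $\lambda^{-1}$ scales the binary cubic $f_{\max}\cdot h$ by $\lambda^{-2}$ (or $\widetilde{h}\cdot f_{\max}$ by $\lambda^{-1}$ in the right-action normalization — I would pin down the exact power using the formulas in Section~\ref{subsec:binary cubics}), hence the binary cubic attached to $T(\lambda^{-1}h)$ is, up to a unit, $\lambda^{-?}$ times the binary cubic $g_h$ attached to $T(h)$. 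The key input is then: a binary cubic $g$ over $\Z_p$ corresponds to a ring that is $\Z_p$-closed iff its coefficients lie in $\Z_p$ (this is the content of Proposition~\ref{AIT} specialized to $m = 1$, together with the content definition), so $\lambda^{-?} g_h$ has $\Z_p$-coefficients iff $\lambda$ divides the content of $g_h$, which by definition of the $p$-adic content $c(T(h))$ is exactly $\lambda \mid p^{c(T(h))}$. Conversely, if $\lambda \in \Z_p$ and $\lambda \mid p^{c(T(h))}$ with $h \in M_2(\Z_p)$, running the same computation backwards shows $T(\lambda^{-1}h)$ is a ring. (If $h \notin M_2(\Z_p)$ both sides vanish.)

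The main obstacle — really the only place one has to be careful — is bookkeeping the normalizations: the tilde operation $x \mapsto \widetilde{x}$, the left versus right $\GL_2$-action on binary cubics in Section~\ref{subsec:binary cubics}, the factor $\det(x)^2$ in $f\cdot g = \widetilde{g}\cdot f = \det(g)^2 f((w,z)g^{-1})$, and the factor $|\lambda|$ already stripped off in the statement. Concretely I need to verify that scaling $h$ by the scalar $\lambda^{-1} \in \Q_p^\times$ multiplies the binary cubic attached to $T(\lambda^{-1}h)$ by a fixed power of $\lambda$ times a $\Z_p^\times$-unit, so that "$\lambda$ divides the content of $g_h$" comes out with the correct (first) power on $p$, matching $\lambda \mid p^{c(T(h))}$ rather than $\lambda^2 \mid p^{2c}$ or similar. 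Once the $\GL_2$-equivariance of the cubic-ring $\leftrightarrow$ binary-cubic dictionary (Lemma~\ref{lem:multTable}, Proposition~\ref{AIT}) is invoked correctly, the corollary is essentially a translation exercise, and I would keep the write-up short, citing Section~\ref{Sect. binary cubic} for each equivalence and only spelling out the content computation in the last paragraph.
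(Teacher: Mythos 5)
Your proposal is correct and takes essentially the same route as the paper's (very terse) proof: translate the first bullet of Lemma~\ref{Lemma for Phichi} into $\lambda\in\Z_p$ and the last three bullets, via Proposition~\ref{AIT} together with Proposition~\ref{prop:daggerHolds} (and Corollary~\ref{cor:mfint} for integrality of $h'$), into ``$T(\lambda^{-1}h)$ is a ring,'' and then convert that into the content condition by scaling the associated binary cubic. On the normalization you left open: in fact $h'=\widetilde{\lambda^{-1}h}$ exactly (no factor of $\lambda$ to clear), and $(\lambda^{-1}\widetilde{h})\cdot f_{\mathrm{max}}=\lambda^{-1}\,(\widetilde{h}\cdot f_{\mathrm{max}})$ since the $\det^{-1}$ contributes $\lambda^{2}$ against $\lambda^{-3}$ from cubic homogeneity, so the cubic attached to $T(\lambda^{-1}h)$ is $\lambda^{-1}$ times that of $T(h)$ and the first-power condition $\lambda\mid p^{c(T(h))}$ comes out exactly as you predicted.
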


\begin{proof}
This follows from Lemma \ref{Lemma for Phichi} by an application of the results of Section~\ref{Sect. binary cubic}. The condition $\lambda \in \Z_p$ corresponds to the first bullet point while the condition $\lambda\mid p^{c\left(T(h)\right)}$ corresponds to the last three bullet points in Lemma~\ref{Lemma for Phichi}.
\end{proof}

Note that the characteristic function in the statement of Corollary \ref{cor:A0} is the same characteristic function as described in \cite[p. 21]{PollackG2} for $p\ge5$.

We now introduce some notation that will be used in the proof of Lemma \ref{Lemma for Phichi}. Recall that $\Theta_0$ is the split model of the integral octonions with $\Z$-basis $\{\epsilon_1,\epsilon_2, e_1, e_2, e_3, e_1^*, e_2^*,e_3^*\}$. For $\alpha_j$ and $\gamma_k$ in a ring $R$, we define
\[
\{\alpha_1,\alpha_2,\alpha_3\}_{e_1} := \alpha_1 \epsilon_1 \wedge e_1 -\alpha_2 \epsilon_2 \wedge e_1 + \alpha_3 e_2^* \wedge e_3^*,
\]
\[
\{\alpha_1,\alpha_2,\alpha_3\}_{e_2} := \alpha_1 \epsilon_1 \wedge e_2 -\alpha_2 \epsilon_2 \wedge e_2 + \alpha_3 e_3^* \wedge e_1^*,
\]
\[
\{\alpha_1,\alpha_2,\alpha_3\}_{e_3} := \alpha_1 \epsilon_1 \wedge e_3 -\alpha_2 \epsilon_2 \wedge e_3 + \alpha_3 e_1^* \wedge e_2^*,
\]
and
\[
\{\gamma_1,\gamma_2,\gamma_3\}_{e_1^*} := \gamma_2 \epsilon_1 \wedge e_1^* -\gamma_1 \epsilon_2 \wedge e_1^* + \gamma_3 e_2 \wedge e_3,
\]
\[
\{\gamma_1,\gamma_2,\gamma_3\}_{e_2^*} := \gamma_2 \epsilon_1 \wedge e_2^* -\gamma_1 \epsilon_2 \wedge e_2^* + \gamma_3 e_3 \wedge e_1,
\]
\[
\{\gamma_1,\gamma_2,\gamma_3\}_{e_3^*} := \gamma_2 \epsilon_1 \wedge e_3^* -\gamma_1 \epsilon_2 \wedge e_3^* + \gamma_3 e_1 \wedge e_2
\]
as elements of $\wedge^2_{\Z}\Theta_0 \otimes R$. This notation is useful because one has
\[
\big[\{\alpha_1,\alpha_2,\alpha_3\}_{e_1},\{\beta_1,\beta_2,\beta_3\}_{e_2}\big] = \{\alpha_2\beta_3+\alpha_3\beta_2,\alpha_3\beta_1 + \alpha_1 \beta_3,\alpha_1\beta_2 + \alpha_2\beta_1\}_{e_3^*}
\]
and
\[
\big[\{\gamma_1,\gamma_2,\gamma_3\}_{e_1^*},\{\delta_1,\delta_2,\delta_3\}_{e_2^*}\big] = \{\gamma_2\delta_3+\gamma_3\delta_2,\gamma_3\delta_1 + \gamma_1 \delta_3,\gamma_1\delta_2 + \gamma_2\delta_1\}_{e_3}.
\]
We also obtain
\begin{equation}
\begin{split}
\label{bracket e1e3*}
  \big[\{\alpha_1,\alpha_2,\alpha_3\}_{e_1},\{\gamma_1,\gamma_2,\gamma_3\}_{e_3^*}\big] &= (\alpha_1 \gamma_1 + \alpha_2\gamma_2 + \alpha_3 \gamma_3) e_1 \wedge e_3^*,\\
  \big[\{\alpha_1,\alpha_2,\alpha_3\}_{e_j},\{\beta_1,\beta_2,\beta_3\}_{e_j}\big] &= 0.
  \end{split}
\end{equation}
For our later use, we record the following formulas. From the formulas in Section \ref{subsec:Heis}, we find that under the action of $m = \mm{a}{b}{c}{d} \in \GL_2 \simeq M$, we have
\[
2 \epsilon_1 \wedge e_1 + \epsilon_2 \wedge e_1 - e_2^* \wedge e_3^* \mapsto a\left(2 \epsilon_1 \wedge e_1 + \epsilon_2 \wedge e_1 - e_2^* \wedge e_3^*\right) + c\left(2 \epsilon_2 \wedge e_3^* + \epsilon_1 \wedge e_3^* + e_1 \wedge e_2\right),
\]
and
\[
2 \epsilon_1 \wedge e_3^* + \epsilon_2 \wedge e_3^* - e_1 \wedge e_2 \mapsto b\left(\epsilon_1 \wedge e_1 + 2 \epsilon_2 \wedge e_1 + e_2^* \wedge e_3^*\right) + d\left(2 \epsilon_1 \wedge e_3^* + \epsilon_2 \wedge e_3^* - e_1 \wedge e_2\right).
\]
In other words,
\[
\{2,-1,-1\}_{e_1} \mapsto a \{2,-1,-1\}_{e_1} + c\{-2,1,1\}_{e_3^*},
\]
and
\[
\{-1,2,-1\}_{e_3^*} \mapsto b \{1,-2,1\}_{e_1} + d\{-1,2,-1\}_{e_3^*}.
\]

Now, recall that
\[
\widetilde{v_E} = \epsilon_1 \wedge (e_1 + e_3^*) = \epsilon_1 \wedge e_1 + \epsilon_1 \wedge e_3^* = \{1,0,0\}_{e_1} + \{0,1,0\}_{e_3^*}.
\]
We thus have
\begin{equation}\label{vE tilde}
\begin{split}
\widetilde{v_E}
&= \{1,0,0\}_{e_1} + \{0,1,0\}_{e_3^*} \\
&= \frac{1}{3}\left( \{1,1,1\}_{e_1} + \{1,1,1\}_{e_3^*}\right)
+ \frac{1}{3} \left( \{2,-1,-1\}_{e_1} + \{-1,2,-1\}_{e_3^*}\right).
\end{split}
\end{equation}
The terms in the first set of parentheses are in the Lie algebra $\mathfrak{g}_2$ and they correspond to the cubic $x^2y + xy^2$ in the sense that was described in the first paragraph of \cite[p. 18]{PollackG2}.

For $m \in M \simeq \GL_2$, we can now compute $m \widetilde{v_E}$. Write $m = \mm{a}{b}{c}{d}$. Also, let $f_0(w,z) = w^2z + wz^2$, and
\[
m \cdot f_0(w,z)
= \det(m)^{-1} f_0((w,z)m) = \alpha w^3 + \beta w^2 z + \gamma wz^2 + \delta z^3.
\]
We also recall the following notations from Section 4.1 in \cite{PollackG2}.
\begin{align*}
&E_{12} = -e_1 \wedge e_2^*,
\quad
	E_{23} = - e_2 \wedge e_3^*,
\\
	&v_1 = \{1,1,1\}_{e_1},
	\quad
 \delta_3 = \{1,1,1\}_{e_3^*}.
 \end{align*}
By using this notation and \eqref{vE tilde}, we obtain
\begin{align*}
m \widetilde{v_E}
=&\, \alpha E_{12}
+ \frac{1}{3} \beta v_1
+ \frac{1}{3} \gamma \delta_3
+ \delta E_{23} \\
&\,\,\, + \frac{1}{3}\left(a \{2,-1,-1\}_{e_1} + c\{-2,1,1\}_{e_3^*}
+ b\{1,-2,1\}_{e_1} + d\{-1,2,-1\}_{e_3^*}\right).
\end{align*}

Finally, we need to present one piece of calculation before the proof of Lemma \ref{Lemma for Phichi}. Suppose that $X = u_1 E_{12} + u_2 v_1 + u_3 \delta_3 + u_4 E_{23}$ is in the Lie algebra of $N$, that is, the unipotent radical of the Heisenberg parabolic of $G_2$. We need to compute $[X,\widetilde{v_E}]$. By \eqref{bracket e1e3*},
\begin{align*}
[X,\widetilde{v_E}] = [u_2 v_1 + u_3 \delta_3, \widetilde{v_E}]
 &= [u_2 \{1,1,1\}_{e_1} + u_3 \{1,1,1\}_{e_3^*}, \{1,0,0\}_{e_1} + \{0,1,0\}_{e_3^*}] \\
&= u_2 [\{1,1,1\}_{e_1},\{0,1,0\}_{e_3^*}]  -u_3 [\{1,0,0\}_{e_1}, \{1,1,1\}_{e_3^*}] \\
&=  (u_2 - u_3) e_1 \wedge e_3^*.
\end{align*}

We are now in a position to compute $\Phi_{p, \chi}(t,h)$.
\begin{proof}[Proof of Lemma \ref{Lemma for Phichi}]
Set $\lambda = \det(h)/t$ and  $n = \exp(X)$. From the computations above, we obtain
	\[
	n^{-1} \widetilde{v_E} = \widetilde{v_E} + (u_2-u_3) E_{13}.
	\]
Then
	\[
	t h^{-1} n^{-1} \widetilde{v_E}
	= th^{-1} \widetilde{v_E} + t\det(h)^{-1}(u_2-u_3) E_{13}
	= \lambda^{-1} \det(h) (h^{-1} \cdot \widetilde{v_E}) + \frac{u_2-u_3}{\lambda} E_{13}.
	\]
Now, $h^{-1} = \det(h)^{-1} \mm{d}{-b}{-c}{a}$. Thus        \begin{align*}
	t h^{-1} n^{-1} \widetilde{v_E}
	=&\,  \alpha_1 E_{12} + \frac{\beta_1}{3} v_1 + \frac{\gamma_1}{3} \delta_3 + \delta_1 E_{23} + \frac{u_2-u_3}{\lambda} E_{13} \\
	&+  \frac{\lambda^{-1}}{3}\left(d\{2,-1,-1\}_{e_1} -c\{-2,1,1\}_{e_3^*} -b\{1,-2,1\}_{e_1} + a\{-1,2,-1\}_{e_3^*}\right).
	\end{align*}
We will use this expression to verify the result. Indeed, by rewriting it, we obtain
\begin{align*}
    th^{-1}n^{-1} \widetilde{v_E} &= \alpha_1 E_{12} + \delta_1 E_{23} + \frac{u_2-u_3}{\lambda} E_{13}\\
    &\,\,+ \lambda^{-1} d\{1,0,0\}_{e_1} + \lambda^{-1}b \{0,1,0\}_{e_1} + \frac{1}{3}(\beta_1 - \lambda^{-1}d - \lambda^{-1}b)\{1,1,1\}_{e_1}\\
    &\,\,+ \lambda^{-1}c \{1,0,0\}_{e_3^*} + \lambda^{-1} a \{0,1,0\}_{e_3^*} + \frac{1}{3}(\gamma_1 - \lambda^{-1}c - \lambda^{-1}a)\{1,1,1\}_{e_3^*}.
\end{align*}
Observe that in order for the integral over $N^{0,E}\backslash N$ in \eqref{Phichi} to be nonzero, we must have $\lambda \in \Z_p$.  Moreover, the resulting integral is $|\lambda|$ times the characteristic function of the quantities
\begin{itemize}
    \setlength\itemsep{0.3em}
    \item $\displaystyle \alpha_1, \delta_1 \in \Z_p$,
    \item $\displaystyle \lambda^{-1}a, \lambda^{-1}b, \lambda^{-1}c, \lambda^{-1} d \in \Z_p$,
    \item $\displaystyle \frac{1}{3}(\beta_1 - \lambda^{-1}d - \lambda^{-1}b) , \frac{1}{3}(\gamma_1 - \lambda^{-1}c - \lambda^{-1}a)\in \Z_p$.
\end{itemize}
The result follows.
\end{proof}


\subsection{The local unramified computations}
Recall that $T(x)$ is the $\Z_p$-module spanned by $\{1, \delta\omega-\beta\theta, -\gamma\omega + \alpha\theta\}$ as in Section~\ref{subsect:compute Phip}. For ease of notation, we set
\[
c(x)= c(T(x)) \quad \text{for } \, x\in \GL_2(\Q_p).
\]
For an element $h \in \GL(2,\Q_p)$, let $[h]$ denote the coset $h\GL(2,\Z_p).$ Whether or not $T(x)$ is closed under multiplication is independent of the element $x \in h\GL(2,\Z_p).$ Recall the integral $I_p(s)=I_p(\mathcal L; s)$ in \eqref{Ips}. By using the exact same calculations as in \cite[p. 22]{PollackG2}, we find that
\[
I_p(s+1)
= \frac{1}{1-z}
\sum_{[h]}L(hv)|\det(h)|^{-2}z^{\text{val}(\det(h))-c(h)}\big(1-z^{c(h)+1}\big)\charf\big(c(h) \geq 0\big).
\]
We define
\[
P_h(z):=z^{\text{val}(\det(h))-c(h)}\big(1-z^{c(h)+1}\big)\charf\big(c(h) \geq 0\big),
\]
and write
\[
I_p(s+1)= \frac{1}{1-z}\sum_{[h] }L(hv)|\det(h)|^{-2}P_h(z).
\]

To evaluate of $I_p(s)$ in terms of $L$-functions, we must apply $M(\pi_p,s)$ to $I_p(s+1)$ (see \cite[Section 5.4, 5.8]{PollackG2}). The computations follow line-by-line just as in \emph{loc cit}. To demonstrate that the results in \cite{PollackG2} also hold for $p=2,3$, we fill in various details that were omitted in that paper.

Suppose $h = p^ch_0$, with $f_0=f_{\textrm{max}}\cdot h_0$ integral and not divisible by $p$, so that the content of $h$ is $c$, so $c(h) = c$. We begin by explaining the proof of the following lemma, which is a restatement of Lemma 5.10 in \cite{PollackG2}.

\begin{lemma}\label{Lemma5.10inPollackG2}
Suppose $f = p^cf_0$, with $f_0$ in each of the cases enumerated below. Denote by $\Lambda_f$ the
rank two $\mathcal{O}$-lattice corresponding to $f$. Depending on some cases, the content $c(\Lambda')$ of the index $p$ sublattices $\Lambda'$ of $\Lambda_f$ can be described as follows.
\begin{enumerate}
     \item If $f_0$ irreducible mod $p$, then there are $(p + 1)$ sublattices $\Lambda'$ each of which have index $p$ and satisfy $c(\Lambda')= c-1$.
      \item If $f_0 =\ell g$ where $\ell$ is a line and $g$ is irreducible modulo $p$, then there is one sublattice $\Lambda'=\Lambda_{\ell}$ with $c(\Lambda_{\ell}) = c$ while the other $p$ sublattices satisfy $c(\Lambda')=c-1$.
      \item
      If $f_0 =\ell_1\ell_2\ell_3$ where the $\ell_i$ are distinct lines modulo $p$, then there are three sublattices given by $\Lambda'=\Lambda_{\ell_i}$ for $i=1,2, 3$, and each satisfy $c(\Lambda_{\ell_i}) = c$, while for other $(p-2)$ sublattices $\Lambda'$ we have $c(\Lambda')=c-1$.
     \item
     If $f_0 =\ell_1^2\ell_2$ where $\ell_1,\ell_2$ are distinct lines modulo $p$, then there is one sublattice $\Lambda'=\Lambda_{\ell_1}$ with $c(\Lambda_{\ell_1}) = c+1$, another sublattice  $\Lambda'=\Lambda_{\ell_2}$ that has $c(\Lambda_{\ell_2}) = c$  while the other $(p-1)$ sublattices $\Lambda'$ all satisfy $c(\Lambda')=c-1$.
     \item
     If $f_0 =\alpha\ell^3$ where $\ell$ is a line modulo $p$ and $\alpha \in (\mathcal{O}/p)^{\times}$, then there is one sublattice $\Lambda'=\Lambda_{\ell}$ with $c(\Lambda_{\ell}) = c+2$ while the other $p$ sublattices $\Lambda'$ all satisfy $c(\Lambda')=c-1$.
\end{enumerate}
\end{lemma}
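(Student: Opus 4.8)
The plan is to follow the proof of Lemma 5.10 in \cite{PollackG2}, recording the handful of points at which the small primes $p=2,3$ could intervene and checking that they do not. Since the proof is essentially an exercise in the arithmetic invariant theory of Section~\ref{Sect. binary cubic}, most of it is insensitive to $p$. First I would recall that the index-$p$ sublattices $\Lambda'$ of the rank two lattice $\Lambda_f$ are parametrized by the $p+1$ points of $\mathbf{P}^1(\F_p)$, via $\Lambda'\mapsto \Lambda'/p\Lambda_f\subseteq\Lambda_f/p\Lambda_f$, and that each $\Lambda'$ determines a $\GL_2(\Z_p)$-class of binary cubic form — equivalently a cubic ring — and hence a content $c(\Lambda')$. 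The heart of the matter is then the following local statement: writing $f=p^cf_0$ with $f_0$ primitive and $\overline{f_0}:=f_0\bmod p$, the content of the sublattice $\Lambda'$ attached to $P\in\mathbf{P}^1(\F_p)$ equals $c+m_P-1$, where $m_P\in\{0,1,2,3\}$ is the multiplicity of $P$ as a root of $\overline{f_0}$.

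To prove this local statement I would use the $\GL_2(\Z_p)$-equivariance of the binary cubic/cubic ring dictionary — this is exactly Lemma~\ref{lem:multTable} — to move $P$ into standard position, so that $\overline{f_0}=\overline{\ell}^{\,m_P}\,\overline{g}$ with $\overline{\ell}$ the linear form vanishing at $P$ and $\overline{g}(P)\neq 0$; the binary cubic attached to $\Lambda'$ is then obtained by the explicit $\GL_2(\Q_p)$ operation that contracts the $P$-direction, and a direct count of the $p$-adic valuations of its four coefficients gives content $c+m_P-1$. The only place where the value of $p$ genuinely enters is that, in order to read off $c(\Lambda')$, one needs that $\Lambda'$ being closed under multiplication is equivalent to the associated binary cubic having coefficients in $\Z_p$. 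For $p\neq 3$ this is immediate, while for $p=3$ it requires the congruence $\dagger$ of Proposition~\ref{AIT}; that $\dagger$ is automatic is precisely Proposition~\ref{prop:daggerHolds} (compare also Corollary~\ref{cor:mfint}), so the content computation is unaffected at $p=3$. With the local statement established, the lemma follows by going through the five factorization types of a primitive binary cubic over $\F_p$: an irreducible cubic ($m_P=0$ at all $p+1$ points), a line times an irreducible quadratic (one simple root, $p$ non-roots), three distinct lines (three simple roots, $p-2$ non-roots), $\ell_1^2\ell_2$ (one double root, one simple root, $p-1$ non-roots), and $\alpha\ell^3$ (one triple root, $p$ non-roots). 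Feeding each type into $c(\Lambda')=c+m_P-1$ reproduces cases (1)--(5) of Lemma~\ref{Lemma5.10inPollackG2}. For $p=2$ one should note that $\mathbf{P}^1(\F_2)$ has only three points, so the $``p-2"$ and $``p-1"$ counts collapse (e.g. in case (3) there are no non-root sublattices), and similarly for $p=3$; the statements remain correct.

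The main obstacle I expect is the case-by-case valuation bookkeeping inside the proof of the local statement, especially keeping track of the denominators of $3$ that appear in the good basis and in $\dagger$ when $p=3$; but Lemma~\ref{lem:multTable} and Proposition~\ref{prop:daggerHolds} package exactly what is needed, so this reduces to transcribing the computation of \cite{PollackG2} with the small primes no longer excluded. A secondary and easier point is to verify that all five factorization types genuinely occur over $\F_2$ and $\F_3$, so that no case of the lemma is vacuous in a misleading way.
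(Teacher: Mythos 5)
Your reduction of the lemma to the ``local statement'' that the index-$p$ sublattice attached to $P\in\mathbf{P}^1(\F_p)$ has content $c+m_P-1$, to be proved by ``a direct count of the $p$-adic valuations'' of the contracted cubic, has a genuine gap precisely in the two cases where the content goes up, namely (4) and (5). The reduction $\overline{f_0}$ only gives \emph{inequalities} on the valuations of the coefficients, and these do not pin down the content when $m_P\geq 2$. Concretely, put $P=[1:0]$ and write $f_0=aw^3+bw^2z+cwz^2+dz^3$; in the conventions of Section~\ref{Sect. binary cubic} the sublattice at $P$ is $T_m$ with $m=\diag(1,p)$, whose cubic is $p^{-1}f_0(w,pz)$ with coefficients $(a/p,\,b,\,pc,\,p^2d)$, of content $\min(\val(a)-1,\val(b),\val(c)+1,\val(d)+2)$. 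If $m_P=2$ you only know $\val(a)\geq 1$, $\val(b)\geq 1$ and that $c$ is a unit, so this minimum can be $0$ rather than $1$; similarly for $m_P=3$ it can be anything in $\{0,1,2\}$. An explicit counterexample to your unconditional local statement is $f_0=pw^3+wz^2$: it is primitive, $\overline{f_0}=wz^2$ is of type $\ell_1^2\ell_2$ with double line at $[1:0]$, yet the sublattice there has cubic $w^3+pwz^2$, of content $0$, not $c+1=1$. This does not contradict the lemma, because $f_0=w(pw^2+z^2)$ generates a ramified cubic algebra $\Q_p\times\Q_p(\sqrt{-p})$; but it shows that the statement you propose to prove is false without the hypothesis, implicit in ``$\Lambda_f$ is a lattice corresponding to $f$'' (i.e.\ $f_0=f_{\mathrm{max}}\cdot h_0$), that the ambient cubic algebra is \'etale and unramified (indeed split) -- a hypothesis your argument never invokes.

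That hypothesis is exactly where the real work lies, and it is how the paper argues: since $T_0\otimes\Q_p$ is unramified, $f_0$ splits into linear factors over $\mathcal{O}_L$ for an \emph{unramified} extension $L/\Q_p$ (compatibility of the dictionary with base change plus Gauss's lemma), and since $p$ is still a uniformizer of $\mathcal{O}_L$, a root of $\overline{f_0}$ of multiplicity $m_P$ forces $m_P$ of the linear factors to be separately congruent to the corresponding line modulo $p$. Multiplying out gives $\val(a)\geq m_P$, $\val(b)\geq m_P-1$, $\val(c)\geq m_P-2$ in the normalization above, and only then does the valuation count yield content exactly $c+m_P-1$ in cases (4) and (5). (For cases (1)--(3), and for the simple-root and non-root sublattices in (4)--(5), your mod-$p$ count does suffice.) A secondary remark: your concern about $p=3$ and the congruence $\dagger$ is misplaced for this particular lemma, since the content is defined directly as the largest $c$ with $p^{-c}$ times the binary cubic integral, so Proposition~\ref{prop:daggerHolds} is not needed here; what must be added to your proposal is the unramified splitting step, after which it becomes essentially the paper's proof.
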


\begin{proof}
First we claim that $f_0$ factors into linear factors over an unramified field extension $L$ of $\Q_p$. To see this, let $T_0$ be the cubic ring corresponding to $f_0$. Then by assumption, $T_0 \otimes \Q_p$ is an unramified extension of $\Q_p$. It follows that for some unramified field extension $L/\Q_p$, one has $T_0 \otimes L \approx L \times L \times L$. The binary cubic corresponding to the right-hand side is split, say $xy(x+y)$. The association between binary cubics and cubic rings is clearly compatible with base change, so it follows that $f_0$ factors over $L$.
	
Let $\mathcal{O}_L$ denote the ring of integers in $L$. By Gauss's Lemma, we conclude that $f_0$ factors into linear factors over $\mathcal{O}_L$, say $f_0 = \ell_1 \cdot \ell_2 \cdot \ell_3$. By using this factorization of $f_0$ and the fact that $p$ is a uniformizer in $\mathcal{O}_L$, the lemma follows without much difficulty.
	
Suppose that we are in the final case, so $f_0 \equiv \alpha \ell^3$ modulo $p$. Without loss of generality, we can assume that $\ell = x$. Then by the factorization of $f_0$ given above, we can write $f_0(w,z) = \beta \ell_1'\ell_2'\ell_3'$ with $\beta \in \mathcal{O}_L^\times$ and $\ell_1'\equiv \ell_2'\equiv \ell_3' \equiv x$ modulo $p$.  It follows that $\frac{1}{p}f_0(p(w,z))$ has content $2$, showing that one of the $(q+1)$ sublattices has content $2$.

The rest of the proof proceeds similarly.
\end{proof}

Next, we explain some of the aspects of the proof that were omitted in the explanations after Lemma 5.10 in \cite{PollackG2}. In \emph{loc cit}, we have
\[
P_h(z) = z^{v-c}(1-z^{c+1})\charf\big(c \geq 0\big).
\]
as we set $v=v(h)=\text{val}(\det(h))$ and $c=c(h)$. Define
\[
\mathcal{T}(p) = \GL_2(\mathcal{O})(\begin{smallmatrix}p&0\\0&1 \end{smallmatrix})\GL_2(\mathcal{O})
\]
and
\[
\mathcal{T}(p^{-1}) = \GL_2(\mathcal{O})(\begin{smallmatrix}p^{-1}&0\\0&1 \end{smallmatrix})\GL_2(\mathcal{O}).
\]

From Section 5.7 in \cite{PollackG2}, we recall the function
\[
M_h(z) = p^2P_{hp}(z) + P_{hp^{-1}}(z) + (N(f_{\text{max}}\cdot h)-1)P_h(z) + p P_{h * \mathcal{T}(p)}(z) + P_{h * \mathcal{T}(p^{-1})}(z).
\]
Also recall that for a Hecke operator $Y$ on $\GL_2$ with coset decomposition $Y=\sum_{i} a_i[y_i \GL_2(\mathcal{O})]$, we have
$P_{h*Y}(z)=\sum_{i} a_iP_{hy_i}$. Note that $h y_i$ and  $hp$ are simply scalar multiples of the matrix $h$. Moreover, we define
 \[
 B_0(z) = 1 + (p+1)z +pz^2 + (p^2+p)z^3+p^2 z^4.
 \]
In \cite{PollackG2}, the purpose of the discussion below Lemma 5.10 is to prove the following result, which is a restatement of Lemma 5.12  of \emph{loc cit}.

\begin{lemma}\label{Lemma5.12inPollackG2}
Let the notations be as above. Then
\begin{equation*}
   (1+pz)^{-1}L(E,s)\left(B_0(z)P_h(z) - z^2 M_h(z)\right)=\charf\big({\rm val}(h)=c(h)\big) z^{v-c}\big(1+N(f_{\operatorname{max}})-\epsilon(h_0)z\big).
\end{equation*}
\end{lemma}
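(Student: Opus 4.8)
The plan is to make each of the five summands of $M_h(z)$ completely explicit as a polynomial in $z$ — with coefficients depending on $p$ and on the reduction type of $f_0$ — and then to simplify $B_0(z)P_h(z)-z^2M_h(z)$ by hand, dividing by $(1+pz)$ and multiplying by $L(E,s)$ at the end. Throughout write $v=\operatorname{val}(\det h)$ and $c=c(h)$, so $h=p^ch_0$ with $f_0=f_{\max}\cdot h_0$ primitive and $P_h(z)=z^{v-c}(1-z^{c+1})\charf(c\ge 0)$. The first step handles the elementary terms: since $hp^{\pm 1}=p^{c\pm1}h_0$ differs from $h$ only by a central scalar, one has $\operatorname{val}(\det(hp^{\pm1}))=v\pm 2$ and $c(hp^{\pm1})=c\pm1$, so $P_{hp}(z)=z^{v-c+1}(1-z^{c+2})\charf(c\ge-1)$ and $P_{hp^{-1}}(z)=z^{v-c-1}(1-z^{c})\charf(c\ge1)$; likewise $f_{\max}\cdot h=p^cf_0$, whence $N(f_{\max}\cdot h)=p+1$ when $c\ge 1$ and $N(f_{\max}\cdot h)=N(f_0)$ when $c=0$ — this dichotomy being the only reason the computation on the interior of the support ($c\ge 1$) differs from the one on the boundary ($c=0$).

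The next step treats the two Hecke terms. By the coset–decomposition rule one has $P_{h*\mathcal{T}(p)}(z)=\sum_{i=1}^{p+1}P_{hy_i}(z)$, where the $hy_i$ run over the $p+1$ index-$p$ sublattices of $\Lambda_{f_{\max}\cdot h}$; each $hy_i$ has $\operatorname{val}(\det(hy_i))=v+1$, and $c(hy_i)$ is the content $c(\Lambda')$ of the corresponding sublattice, which is precisely what Lemma~\ref{Lemma5.10inPollackG2} records in each reduction type of $f_0$. For $\mathcal{T}(p^{-1})$ I will use the elementary bijection $\Lambda''\mapsto p\Lambda''$ between the index-$p$ overlattices of $\Lambda_{f_{\max}\cdot h}$ and its index-$p$ sublattices: since $c(p\Lambda'')=c(\Lambda'')+1$ and $\operatorname{val}(\det(hz_j))=v-1$, this expresses $P_{h*\mathcal{T}(p^{-1})}(z)=\sum_j P_{hz_j}(z)$ through exactly the same contents supplied by Lemma~\ref{Lemma5.10inPollackG2}. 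Substituting all of this into $M_h(z)=p^2P_{hp}+P_{hp^{-1}}+(N(f_{\max}\cdot h)-1)P_h+pP_{h*\mathcal{T}(p)}+P_{h*\mathcal{T}(p^{-1})}$ yields, in each reduction type, $z^{v-c}$ times an explicit polynomial in $z$ of bounded degree.

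Finally one forms $B_0(z)P_h(z)-z^2M_h(z)$, factors out $z^{v-c}$, and checks type by type that the remaining polynomial is divisible by $(1+pz)$, that after dividing and multiplying by the local $L$-factor $L(E,s)$ of the cubic algebra $E$ cut out by $f_0$ the result is $z^{v-c}\big(1+N(f_{\max})-\epsilon(h_0)z\big)$ precisely when $\operatorname{val}(h)=c(h)$ (equivalently when $h_0$ is primitive as a matrix), and that it vanishes otherwise — the vanishing for $\operatorname{val}(h)>c(h)$ being produced by cancellation of the $\mathcal{T}(p)$- and $\mathcal{T}(p^{-1})$-contributions against the scalar-translate terms. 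The polynomial $B_0(z)$, the factor $(1+pz)$, and the $L$-factor $L(E,s)$ are engineered exactly so that all dependence on the reduction type of $f_0$ disappears, while the characteristic function $\charf(\operatorname{val}(h)=c(h))$ emerges from comparing the $\charf(c\ge 0)$ and $\charf(c\ge1)$ supports of the various summands.

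The main obstacle is purely the bookkeeping of this last step: one must keep the $\charf(c\ge 0)$ versus $\charf(c\ge1)$ supports aligned when subtracting, treat the $c=0$ boundary separately using $N(f_{\max}\cdot h)=N(f_0)$, and verify that the case-dependent polynomial coming from Lemma~\ref{Lemma5.10inPollackG2} matches $(1+pz)\,L(E,s)^{-1}$ times the claimed answer in each of the reduction types (i)–(v). Since, as already noted, the relevant characteristic function is the same one used in \cite{PollackG2} for $p\ge 5$, the only novelty for $p=2,3$ is that Lemma~\ref{Lemma5.10inPollackG2} now holds for all $p$; in particular, the $\bmod\ 3$ subtlety — settled via Proposition~\ref{prop:daggerHolds} — does not alter any of the sublattice counts, so once that lemma is in hand the algebra proceeds exactly as in \emph{loc. cit.}
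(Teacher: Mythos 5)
Your proposal is correct and follows essentially the same route as the paper: you make all five summands of $M_h(z)$ explicit (the scalar translates via $v\mapsto v\pm 2$, $c\mapsto c\pm 1$; the Hecke terms via the sublattice contents of Lemma~\ref{Lemma5.10inPollackG2}, your overlattice--sublattice bijection for $\mathcal{T}(p^{-1})$ being the same as using the coset representatives $p^{-1}y_i$; and $N(f_{\operatorname{max}}\cdot h)=p+1$ for $c\ge 1$ versus $N(f_0)$ at $c=0$), and then verify the resulting polynomial identity type by type, exactly as is done here and in \cite{PollackG2}. One correction of emphasis: the factor $\charf(\val(h)=c(h))$ does not arise from aligning the $\charf(c\ge 0)$ and $\charf(c\ge 1)$ supports, nor from a cancellation of the Hecke contributions against the scalar translates; rather, one always has $\val(h)\ge c(h)$ by Corollary~\ref{cor:mfint}, the strict inequality $\val(h)>c(h)$ occurs precisely in the reduction type $f_0\equiv \alpha\ell^3 \pmod{p}$, and in that type the whole combination $B_0(z)P_h(z)-z^2M_h(z)$ vanishes, which is what produces the characteristic function in the case-by-case check.
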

We first elaborate on the evaluation of $B_0(z)P_h(z) - z^2 M_h(z)$ in case $c(h) \geq 2$, which is the case explained in \emph{loc cit}. Let
\[
g(z) = p^2 z^{v-c+1}(1-z^{c+2})+z^{v-c-1}(1-z^c)+pz^{v-c}(1-z^{c+1}).
\]
Then, when $c \geq 2$, the first three terms in the above expression for $M_h(z)$ give $g(z)$. The point is that when $h$ is changed, $v$ and $c$ change, as these depend on $h$. One has $v(hp) = v(h) + 2$ and $c(hp) = c(h) +1$. Thus  $P_{hp}(z) = z^{v-c+1}(1-z^{c+2})$. Similarly, $P_{hp^{-1}}(z) = z^{v-c-1}(1-z^c)$, because $v(hp^{-1}) = v(h) -2$ and $c(hp^{-1}) = c(h)-1$. Finally, because $c \geq 1$, we have $N = N(f_{\textrm{max}} \cdot h) = p+1$, so $(N-1)P_{h}(z) = p z^{v-c}(1-z^{c+1})$. Putting these computations together gives
$$
p^2P_{hp}(z) + P_{hp^{-1}}(z) + (N-1)P_h(z)  = g(z).
$$

The terms in $M_h(z)$ with the Hecke operators are computed using Lemma~\ref{Lemma5.10inPollackG2}. For example, when $f_0$ is irreducible modulo $p$, we have $v(h g_i) = v(h) + 1$ and $c(hg_i) = c(h) -1$ using Lemma 5.10. Here $g_i$ are the coset representatives for the Hecke operator $\mathcal{T}(p)$. Then, in this case, $P_{h *\mathcal{T}(p)}(z) = (p+1) z^{v-c+2}(1-z^{c})$.
Similarly, $$P_{h *\mathcal{T}(p^{-1})}(z) = (p+1)z^{v-c+1}(1-z^{c-1}).$$

Combining the above expressions gives the expression for $M_h(z)$ at the bottom of~\cite[p. 27]{PollackG2}.

\begin{remark}
There is a typo on page 27 in~\cite{PollackG2}. In the case $f_0 = \alpha \ell^3$, the term $z^{v-c+2}(1-z^{c+2})$ should instead say $z^{v-c-2}(1-z^{c+2})$.
\end{remark}

We now claim that the cases where $c=1$ in fact do not need to be considered separately from those where $c \geq 2$. Indeed, this is because the terms that vanish because of the $\charf(c(h) \geq 0)$ in case $c=1$ all have a $(1-z^{c-1})$ in them, and so vanish anyway.

We now explain the calculation of $B_0(z)P_h(z) - z^2 M_h(z)$ in the case $c=0$. First note that, in the case $c=0$,
\[
N = N(f_{\textrm{max}}\cdot h) =
   \begin{cases}
        0 & \mbox{ if } \, c=0  \quad \text{and $f_0$ is irreducible modulo } p, \\
        1 & \mbox{ if } \, c=0 \quad  \text{and }\,  f_0= \ell p, \\
        3 & \mbox{ if } \, c=0 \quad  \text{and } \, f_0= \ell_1\ell_2\ell_3,\\
        2 & \mbox{ if }  \, c=0 \quad  \text{and } \, f_0= \ell_1^2\ell_2, \\
        1 & \mbox{ if } \, c=0 \quad  \text{and }  f_0= \alpha\ell^3.
    \end{cases}
\]
From this expression for $N$, one computes $M_h(z)$ in case $c=0$ as follows.
\begin{enumerate}
\setlength\itemsep{0.3em}
	\item Let $f_0$ be irreducible modulo $p$. In this case, we necessarily have $h=1$ and $M_h(z) = p^2z (1-z^2) + (-1)(1-z)$. Also, $P_h(z) = 1-z$ and then $B_0(z)P_h(z) - z^2 M_h(z) = (1+qz)(1-z^3)$.
	\item Let $f_0 \equiv \ell q \pmod{p}$. We have $M_h(z) = p^2 z^{v+1}(1-z^2) + z^v(1-z) + pz^{v+1}(1-z)$ and thus $B_0(z)P_h(z) - z^2 M_h(z) = z^v(1+pz)(1-z^2)$.
	\item Let $f_0 \equiv \ell_1 \ell_2 \ell_3 \pmod{p}$. We have $M_h(z) = p^2 z^{v+1}(1-z^2) + 2z^v(1-z) + 3 p z^{v+1}(1-z)$ and then $B_0(z)P_h(z) - z^2 M_h(z) = z^v(1+pz)(1-z)^2(1+2z)$.
	\item Let $f_0 \equiv \ell_1^2 \ell_2 \pmod{p}$. Then $M_h(z) = p^2 z^{v+1}(1-z^2) + z^v(1-z) + pz^{v+1}(1-z) + pz^v(1-z^2) + z^{v-1}(1-z)$ and $B_0(z)P_h(z) - z^2 M_h(z) =z^v(1+pz)(1-z)(1-z^2)$.
	\item Let $f_0 \equiv \alpha \ell^3 \pmod{p}$. Then $M_h(z) = p^2 z^{v+1}(1-z^2) + pz^{v-1}(1-z^3)+z^{v-2}(1-z^2)$ and $B_0(z)P_h(z) - z^2 M_h(z) = 0$.
\end{enumerate}
Now, for $B_0(z)P_h(z) - z^2 M_h(z)$, we find the same expression as the one on the top of page 28 in \emph{loc cit} except with $c=0$. Hence we obtain Lemma~\ref{Lemma5.12inPollackG2}. Combined with the relationship
\[
(B_0(z)-pz^2\mathcal{T})I(s+1)
= \frac{1}{1-z}\sum_{[h]}L\big(m(h)v\big)\, |\det(h)|^{-2}\big(B_0(z)P_h(z) - z^2 M_h(z)\big),
\]
this completes our evaluation of $I(s)$.

\section{The Eisenstein series}\label{sec:Eis}
We repeat the definition of the normalized Eisenstein series in \eqref{eqn:NEis}.
\[
E^*_\ell(g,s) = \Lambda(s-1)^2 \Lambda(s) \Lambda(2s-4) \frac{\Gamma(s+\ell-1)\Gamma(s+\ell-2)}{\Gamma(s-1)\Gamma(s-2)} E_\ell(g,s).
\]
The purpose of this section is to prove the following theorem.

\begin{theorem}\label{thm:Eis}
We have
\[
E^*_\ell(g,s) = E^*_\ell(g,5-s).
\]
\end{theorem}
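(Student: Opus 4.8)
The plan is to deduce Theorem~\ref{thm:Eis} from Langlands' functional equation for the degenerate Eisenstein series $E_\ell(g,s)$ on $G=\Spin(8)$ attached to the Heisenberg parabolic $P_G$, by making the global standard intertwining operator completely explicit on the distinguished section $f_\ell(\cdot,s)$. Since $P_G$ is self-associate (it is conjugate to its opposite), the relative Weyl group $W(M_{P_G},G)$ has order two; let $w_0$ denote its nontrivial element. Because $\delta_{P_G}=|\nu|^5$ and we use unnormalized induction, $M(w_0,s)$ maps $\mathrm{Ind}_{P_G(\A)}^{G(\A)}(|\nu|^s)$ to $\mathrm{Ind}_{P_G(\A)}^{G(\A)}(|\nu|^{5-s})$. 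Langlands' theory then supplies the meromorphic continuation of $E_\ell(g,s)$ together with the identity
\[
E_\ell(g,s)=E\big(g,\,M(w_0,s)f_\ell(\cdot,s),\,5-s\big),
\]
where $M(w_0,s)=\bigotimes_v M_v(w_0,s)$ factors as a restricted tensor product of local intertwining operators. (Equivalently, one could argue via the constant term of $E^*_\ell$ along $P_G$ and check its $w_0$-invariance; this reduces to the same computation.)

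First I would treat the finite places. Since each $K_p$ is hyperspecial and $f_\ell$ takes the constant value $x^\ell y^\ell$ on $K_f$, the section is spherical at every finite prime, so the Gindikin--Karpelevich formula applies to the scalar components and gives $M_p(w_0,s)f^0_{\ell,p}(\cdot,s)=\rho_p(s)\,f^0_{\ell,p}(\cdot,5-s)$, where $\rho_p(s)$ is an explicit ratio of local zeta factors read off from the roots of the unipotent radical $N_{P_G}$ (a nine-dimensional Heisenberg group with eight-dimensional abelianization, on which the Levi of type $A_1\times A_1\times A_1$ acts) and the values of the inducing character on the relevant coroots. Taking the product over all $p$ then produces an explicit ratio of Riemann zeta functions, with the required meromorphic continuation.

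Next I would treat the archimedean place, which I expect to be the main obstacle. Here $f_{\ell,\infty}(\cdot,s)$ is \emph{not} spherical: it is the distinguished vector of $K_\infty$-type $\mathbf{V}_\ell=\mathrm{Sym}^{2\ell}(\C^2)$ singled out by property (ii) of its definition, so that $f_{\ell,\infty}(e,s)$ spans the (one-dimensional) space of $K_\infty\cap M_{P_G}$-fixed vectors in $\mathbf{V}_\ell$. The operator $M_\infty(w_0,s)$ is $(\mathfrak g,K_\infty)$-equivariant and, in the compact picture, carries $f_{\ell,\infty}(\cdot,s)$ to a $\mathbf V_\ell$-valued flat section of $\mathrm{Ind}_{P_G(\R)}^{G(\R)}(|\nu|^{5-s})$ whose value at $e$ must again lie in that same one-dimensional space; hence necessarily $M_\infty(w_0,s)f_{\ell,\infty}(\cdot,s)=r_\infty(s)\,f_{\ell,\infty}(\cdot,5-s)$ for a scalar $r_\infty(s)$. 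The real work is to evaluate this scalar, i.e. the archimedean intertwining integral $\int_{\overline N_{P_G}(\R)}f_{\ell,\infty}(w_0^{-1}\bar n,\,s)\,d\bar n$ on the specific $K_\infty$-type vector, since Gindikin--Karpelevich no longer applies. I would do this by reducing to a rank-one situation: restricting along a carefully chosen $\sl_2\subseteq\mathrm{Lie}(G)(\R)$ — exploiting the explicit copies of $\sl_2$ in $\mathrm{Lie}(K_\infty)\otimes\C$ recorded in Section~\ref{sec:Setups} and the action of $E,F,H$ on the powers $x^iy^{2\ell-i}$ — so that the integral collapses to a classical $\SL_2$ intertwining integral, which evaluates to a ratio of Gamma functions; tracking the dependence on $\ell$ pins down $r_\infty(s)$ as an explicit quotient of $\Gamma$-values.

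Finally I would combine the pieces: $M(w_0,s)f_\ell(\cdot,s)=\rho(s)\,f_\ell(\cdot,5-s)$ with $\rho(s)=r_\infty(s)\prod_p\rho_p(s)$, so that $E_\ell(g,s)=\rho(s)\,E_\ell(g,5-s)$ as meromorphic functions. Writing the normalizing factor of \eqref{eqn:NEis} as $c(s)=\Lambda(s-1)^2\Lambda(s)\Lambda(2s-4)\,\Gamma(s+\ell-1)\Gamma(s+\ell-2)\big/\big(\Gamma(s-1)\Gamma(s-2)\big)$, the theorem reduces to the elementary identity $c(s)\rho(s)=c(5-s)$, which I would verify directly from the functional equation $\Lambda(s)=\Lambda(1-s)$, the definitions of $\Gamma_\R$ and $\Gamma_\C$, and the explicit formulas for $\rho_p(s)$ and $r_\infty(s)$ — indeed $c(s)$ was chosen precisely so that its zeta and Gamma factors absorb the denominators of $\rho(s)$ symmetrically about $s=5/2$. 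Then
\[
E^*_\ell(g,s)=c(s)E_\ell(g,s)=c(s)\rho(s)E_\ell(g,5-s)=c(5-s)E_\ell(g,5-s)=E^*_\ell(g,5-s).
\]
The bulk of the difficulty, and the only genuinely new input beyond standard Eisenstein-series machinery, is the computation of the non-spherical archimedean intertwining scalar $r_\infty(s)$.
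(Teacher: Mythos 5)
Your overall architecture is the same as the paper's: invoke Langlands' functional equation for $E_\ell(g,s)$ and reduce everything to an explicit evaluation of the standard intertwining operator on the distinguished non-spherical section $f_\ell(\cdot,s)$, with the finite places handled by Gindikin--Karpelevich and the final bookkeeping identity $c(s)\rho(s)=c(5-s)$ checked against the normalization \eqref{eqn:NEis}. Your multiplicity-one observation at the archimedean place (that the value at $e$ must land in the line of $K_\infty\cap P_G(\R)$-fixed vectors in $\mathbf{V}_\ell$, spanned by $x^\ell y^\ell$) is correct and is a legitimate shortcut the paper does not invoke: it gives proportionality $M_\infty(w_0,s)f_{\ell,\infty}(\cdot,s)=r_\infty(s)f_{\ell,\infty}(\cdot,5-s)$ a priori, whereas the paper obtains this by direct computation. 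You should, however, actually verify the one-dimensionality (it follows from the fact that the compact torus of the $A_1^3$ Levi acts on $\mathrm{Sym}^{2\ell}(\C^2)$ through the long-root $\sl_2$ with nonzero coefficient, so only the zero-weight line is fixed).

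The genuine gap is in your plan for computing $r_\infty(s)$. The intertwining integral is over the nine-dimensional unipotent radical $\overline{N}_{P_G}(\R)$, and it does not ``collapse to a classical $\SL_2$ intertwining integral'' along a single well-chosen $\sl_2$: the relative rank-one structure of the maximal parabolic does not produce a one-variable integral, and no single root $\SL_2$ sees the $\ell$-dependence of the answer. What the paper does instead is factor the long intertwiner as a Weyl word of length nine, $w=[412343214]$, apply the cocycle property, and compute each simple-reflection factor by the $\SL_2$ computation of Lemma \ref{lem:SL2inter}; but because the section is non-spherical at infinity, each factor contributes not a scalar but a diagonal operator $[\,\cdot\,;x,y]$ or $[\,\cdot\,;f_+,f_-]$ (Table \ref{table:Inter}), and these are diagonal in \emph{different} bases and do not commute. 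The heart of the archimedean computation is then Proposition \ref{prop:Polyintertwine}: showing that $x^\ell y^\ell$ is an eigenvector of the composite $M_{\mathrm{poly}}(s)$ in \eqref{eqn:Mpoly} with the precise eigenvalue $c_{\mathrm{poly},\ell}(s)$, which requires the nontrivial generating-function and Pochhammer identities of Lemmas \ref{lem:Fuv} and \ref{lem:c's}. Your multiplicity-one argument only tells you that a scalar exists; it does not compute it, and the exact value matters, since Theorem \ref{thm:Eis} holds only because the Gamma factors in \eqref{eqn:NEis} were tuned to the identity $\Gamma(s-2)\Gamma(s-3)/(\Gamma(s-\ell-2)\Gamma(s-\ell-3))=\Gamma(4-s+\ell)\Gamma(3-s+\ell)/(\Gamma(4-s)\Gamma(3-s))$ applied to the explicit $c_\ell(s)$ of Proposition \ref{prop:intertwine}. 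As written, your proposal leaves this decisive computation unsupplied, so the proof is incomplete at exactly the point you yourself identify as the main difficulty.
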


This theorem is an immediate consequence of Langlands' functional equation, with the difficulty lying in the computation of the appropriate intertwining operator of the section $f_{\ell}(g,s)$.  We remark that Segal \cite{segalEis} has studied the poles of this and related Eisenstein series in a right half-plane.

Consider the diagonal maximal $T'$ of $G'$ consisting of the elements
\[
t=\diag\big(t_1,t_2,t_3,t_4,t_4^{-1},t_3^{-1},t_2^{-1},t_1^{-1}\big).
\]
For $1 \leq j \leq 4$, let $r_j'$ denote the characters of $T'$ that takes the element $t$ to $t_j$. We fix a maximal  $T$ of $G$ that maps to $T'$ under the map $G \rightarrow G'$, and write $r_j$ for the restriction of $r_j'$ to $T$.  We label the Dynkin diagram of $G$ by roots $\alpha_1 = r_1-r_2,\alpha_2 = r_3+r_4,\alpha_3 = r_3-r_4,\alpha_4 = r_2-r_3$. Then $\alpha_4$ is the central vertex of the diagram.

We abuse notation and also denote the restriction to $T$ of the character $t \mapsto |t_j|$ of $T'$ by $r_j$. Then the inducing character for our Eisenstein series is $|\nu|^s = s(r_1 + r_2)$. This is in $\mathrm{Ind}_{B}^{G}(\delta_B^{1/2}\lambda_s)$ with $\delta_B^{1/2} = 3r_1 + 2r_2 + r_3$ and $\lambda_s = (s-3)r_1 + (s-2)r_2 -r_3$.

Let $N$ be the unipotent radical of the Heisenberg parabolic, so that the roots in $N$ are $r_1-r_3, r_1-r_4,r_1+r_4, r_1+r_3, r_2-r_3,r_2-r_4,r_2+r_4,r_2+r_3, r_1 +r_2$. The long intertwiner for $N$ is $w=[412343214] = [412434214]$. Here the notation $[ijk]$ means that one performs a reflection in the roots $i,j,k$ from right to left. To see that this expression for $w$ as a product of simple reflections is correct, one checks that $w$ makes the roots in $N$ negative, and that it has length $9$.

Now, we set
\[
M(w,s)f_{\ell}(g,s)= \int_{N(\A)}{f_{\ell}(w^{-1}ng,s)\,dn}.
\]
If the real part of $s$ is sufficiently large, then this integral converges and has a meromorphic continuation in $s$. We will prove the following result on the integral.

\begin{proposition}\label{prop:intertwine}
We have
\[
M(w,s) f_{\ell}(g,s) = c_{\ell}(s) f_{\ell}(g,5-s),
\]
where
	\[
	c_{\ell}(s)
	= \frac{\Lambda(s-3)^2 \Lambda(s-4)\Lambda(2s-5)}{\Lambda(s-1)^2\Lambda(s)\Lambda(2s-4)}\frac{\Gamma(s-2)\Gamma(s-3)\Gamma(s-2)\Gamma(s-1)}{\Gamma(s-\ell-3)\Gamma(s-\ell-2)\Gamma(s+\ell-1)\Gamma(s+\ell-2)}.
	\]
\end{proposition}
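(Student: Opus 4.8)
The plan is to reduce Theorem~\ref{thm:Eis} to this proposition, which is where all the work lies, and to prove the proposition by factoring the standard intertwining operator over places and then, within the archimedean place, along a reduced word for $w$. Since $f_{\ell}$ is a pure tensor $\bigotimes_v f_{\ell,v}$, the operator factors as $M(w,s)=\prod_v M_v(w,s)$ (as meromorphic functions of $s$, the integral converging for $\operatorname{Re}(s)\gg 0$ as noted above), and by the cocycle relation for intertwining operators each $M_v(w,s)$ is a composition of nine rank-one operators indexed by the nine roots of $N$, namely $r_1\pm r_3$, $r_1\pm r_4$, $r_2\pm r_3$, $r_2\pm r_4$ and $r_1+r_2$. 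I would handle the finite and archimedean places separately and then multiply the results.

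At every finite place the section $f_{\ell,p}$ is the normalized spherical vector, so $M_p(w,s)$ acts on it by the Gindikin--Karpelevich scalar $\prod_{\alpha\in N}\zeta_p(\langle\lambda_s,\alpha^\vee\rangle)/\zeta_p(\langle\lambda_s,\alpha^\vee\rangle+1)$, where $\lambda_s=(s-3)r_1+(s-2)r_2-r_3$ is the exponent of our inducing character relative to $B$. Since $D_4$ is simply laced, the nine coroot pairings are read off at once, giving the multiset of values $\{\,s-4,\ s-3,\ s-3,\ s-3,\ s-2,\ s-2,\ s-2,\ s-1,\ 2s-5\,\}$. Taking the product over all $p$ collapses to a product of Riemann zeta functions, and after cancellation this equals $\zeta(s-3)^2\zeta(s-4)\zeta(2s-5)/\big(\zeta(s-1)^2\zeta(s)\zeta(2s-4)\big)$ --- exactly the ratio of Riemann zeta values inside the first factor of $c_{\ell}(s)$, the missing $\Gamma_\R$-factors being supplied by the archimedean place since $\Lambda=\Gamma_\R\cdot\zeta$.

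The hard part will be the archimedean intertwiner. Here $f_{\ell,\infty}$ is not spherical but the distinguished vector in the minimal $K_\infty$-type $\mathbf{V}_{\ell}=\operatorname{Sym}^{2\ell}(\C^2)$ of the degenerate principal series $\mathrm{Ind}_{P_G(\R)}^{G(\R)}(|\nu|^s)$, so the rank-one steps are not governed by the spherical Gindikin--Karpelevich formula. My plan is: for each of the nine simple reflections, restrict the section to the relevant $\SL_2(\R)$-copy, observe that it is a principal-series section of the form $\mathrm{Ind}_B^{\SL_2(\R)}(|t|^{n}\operatorname{sgn}(t)^{\varepsilon})$ with $n$ the current coroot pairing, and apply the classical rank-one intertwiner to the $\mathrm{SO}(2)$-weight vector the section occupies; this scalar is given by a beta-type integral and equals an explicit ratio of Gamma functions depending on $n$, the weight, and $\varepsilon$, reducing to $\Gamma_\R(n)/\Gamma_\R(n+1)$ on the spherical vector. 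The delicate point --- and the main obstacle --- is the $K_\infty$-type bookkeeping: using the four explicit copies of $\sl_2$ inside $\textrm{Lie}(K_\infty)\otimes\C$ recorded in Section~\ref{sec:Setups} (recall $\mathbf{V}_{\ell}$ is $\operatorname{Sym}^{2\ell}$ for the long-root copy and trivial on the other three), one must track, at each of the nine steps, which copy of $\sl_2$ the root $\SL_2(\R)$ enters, which weight vector of $\mathbf{V}_{\ell}$ is being carried along by the previous reflections, and which sign character $\varepsilon$ is forced by the disconnectedness of $K_\infty'=S(O(4)\times O(4))$. Exactly four of the nine steps should contribute a genuine (non-$\Gamma_\R$) Gamma correction involving $\ell$, and accumulating and simplifying all nine scalars should give the archimedean contribution $\Gamma_\R(s-3)^2\Gamma_\R(s-4)\Gamma_\R(2s-5)\,\Gamma(s-1)\Gamma(s-2)^2\Gamma(s-3)$ divided by $\Gamma_\R(s-1)^2\Gamma_\R(s)\Gamma_\R(2s-4)\,\Gamma(s+\ell-1)\Gamma(s+\ell-2)\Gamma(s-\ell-2)\Gamma(s-\ell-3)$.

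Multiplying the finite and archimedean contributions, and using $\Lambda=\Gamma_\R\cdot\zeta$, yields precisely $c_{\ell}(s)$, which proves the proposition. To finish, I would deduce Theorem~\ref{thm:Eis}: Langlands' formula for the constant term of $E_{\ell}(g,s)$ along $P_G$, together with uniqueness of an Eisenstein series with prescribed constant term, gives $E_{\ell}(g,s)=c_{\ell}(s)E_{\ell}(g,5-s)$ (the relation $c_{\ell}(s)c_{\ell}(5-s)=1$ following from the explicit formula via the reflection formula for $\Gamma$), and the elementary identity $N(s)c_{\ell}(s)=N(5-s)$ --- with $N(s)$ the normalizing factor in \eqref{eqn:NEis}, checked by writing the relevant Gamma quotients as products of linear factors --- then yields $E^*_{\ell}(g,s)=E^*_{\ell}(g,5-s)$.
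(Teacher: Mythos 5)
Your global skeleton (cocycle decomposition of $M(w,s)$ along the reduced word $w=[412434214]$, Gindikin--Karpelevich at the finite places giving $\zeta(s-3)^2\zeta(s-4)\zeta(2s-5)/\bigl(\zeta(s-1)^2\zeta(s)\zeta(2s-4)\bigr)$, rank-one $\SL_2(\R)$ computations at infinity, then the functional equation of $E^*_\ell$ from the proposition) is the same as the paper's, and your finite-place multiset of coroot pairings is correct. The genuine gap is in the archimedean step, and it is exactly the point your ``bookkeeping'' plan cannot survive: you assume that at each of the nine reflections the section occupies a single $\SO(2)$-weight vector of the relevant root $\SL_2(\R)$, so that each rank-one operator acts by a scalar and the total is a product of nine scalars. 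That is false for the four reflections in the central root $\alpha_4$. The compact generator of the $\alpha_4$-copy of $\SL_2$ acts on $\C^2$ through $H^-=e^-+f^-$, whose weight vectors are $f_\pm=x\pm y$, not $x,y$; hence $x^\ell y^\ell$ (and, after the very first step of the word, whatever vector has been produced) is a \emph{sum} of many $\SO(2)$-weight vectors $f_+^{2\ell-2j}f_-^{2j}$, each multiplied by a different Gamma-ratio. The rank-one operator therefore acts not by a scalar but by a diagonal operator in a basis different from the one in which the carried vector is expressed, and since the word alternates between $\alpha_4$ and the outer roots, the archimedean contribution is a composition of non-commuting diagonal operators in the two bases $\{x^a y^b\}$ and $\{f_+^a f_-^b\}$ (the operator $M_{\mathrm{poly}}(s)$ of \eqref{eqn:Mpoly}). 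There are simply no ``nine scalars'' to accumulate, and your sentence ``accumulating and simplifying all nine scalars should give \dots'' asserts precisely the statement that carries all the difficulty.

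What is missing is the proof that $x^\ell y^\ell$ is an eigenvector of this composite operator and the identification of its eigenvalue with $\Gamma(s-1)\Gamma(s-2)^2\Gamma(s-3)/\bigl(\Gamma(s+\ell-1)\Gamma(s+\ell-2)\Gamma(s-\ell-2)\Gamma(s-\ell-3)\bigr)$; this is the paper's Proposition \ref{prop:Polyintertwine}, and it is not a routine bookkeeping exercise: the paper proves it by pairing the basis-change identity $4^\ell x^\ell y^\ell=(f_+^2-f_-^2)^\ell$ with the generating-function identity of Lemma \ref{lem:Fuv} for $F_w(u,v)=(1-2u-2v+(u-v)^2)^{-w}$, leading to the two-step eigen-relation of Lemma \ref{lem:c's}, and only then does the claimed $\ell$-dependent Gamma ratio emerge. (Your guess that ``exactly four of the nine steps contribute an $\ell$-dependent correction'' is also an artifact of the scalar picture: once the carried vector has left the zero-weight line, the outer-root operators $[\,\cdot\,;x,y]$ act nontrivially as well.) So while your final formula is the right one and your reduction of Theorem \ref{thm:Eis} to the proposition is fine, the core of the proposition --- the eigenvector property of the polynomial intertwiner --- is asserted rather than proved, and the method you propose for it would already break at the first reflection of the word, which is an $\alpha_4$-step.
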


\begin{proof}[Proof of Theorem \ref{thm:Eis}]
We note the identity
\[
\frac{\Gamma(s-2)\Gamma(s-3)}{\Gamma(s-\ell-2)\Gamma(s-\ell-3)} = \frac{\Gamma(4-s+\ell)\Gamma(3-s+\ell)}{\Gamma(4-s)\Gamma(3-s)}.
\]
The result then follows from Proposition \ref{prop:intertwine}, Definition \eqref{eqn:NEis} and Langlands' functional equation.
\end{proof}

For the rest of this section, we focus on proving Proposition \ref{prop:intertwine}. We first introduce some notation. Let $x,y$ be indeterminates, $s$ be a complex parameter, and $\ell$ be a fixed positive even integer. Also, set
\[
f_{+} = x+y \quad \text{and} \quad f_{-} = x-y.
\]
We have
\begin{align*}
\textrm{Span}\big(\big\{x^{2\ell-2j}y^{2j} + x^{2j}y^{2\ell-2j}: &\, 0 \leq j \leq \ell/2\big\}\big)  \\
= \textrm{Span} &\big(\big\{ f_{+}^{2\ell-2j} f_{-}^{2j}+f_{+}^{2j}f_{-}^{2\ell-2j}: 0 \leq j \leq \ell/2\big\}\big):=V_{\textrm{even}},
\end{align*}
say. We think of $V_{\textrm{even}}$ as sitting inside the space $\mathbf{V}_{\ell} = \textrm{Sym}^{2\ell}(\C^2)$ (see Section \ref{subsec:Eis}). We will define a few operators on the space $V_{\textrm{even}}$. For a nonnegative integer $k$ and $z \in \C$, let
\[
(z)_k = z(z+1)(z+2) \cdots (z+k-1).
\]
This is the so-called Pochhammer symbol.

For a complex number $s$, we define $[s;x,y]$ as the diagonal operator on $V_{\textrm{even}}$ given by
\[
x^{2\ell-2j}y^{2j}+x^{2j}y^{2\ell-2j} \mapsto \frac{\left(\frac{1-s}{2}\right)_{|\frac{\ell}{2}-j|}}{\left(\frac{1+s}{2}\right)_{|\frac{\ell}{2}-j|}}\, \left(x^{2\ell-2j} y^{2j}+x^{2j}y^{2\ell-2j}\right)
\]
for each $0 \leq j \leq \ell/2$. Similarly, we define $[s;f_{+},f_{-}]$ as the diagonal operator on $V_{\textrm{even}}$ that is given by
\[
f_{+}^{2\ell-2j}f_{-}^{2j} + f_{+}^{2j}f_{-}^{2\ell-2j}  \mapsto \frac{\left(\frac{1-s}{2}\right)_{|\frac{\ell}{2}-j|}}{\left(\frac{1+s}{2}\right)_{|\frac{\ell}{2}-j|}}\, \left(f_{+}^{2\ell-2j} f_{-}^{2j}+f_{+}^{2j}f_{-}^{2\ell-2j}\right)
\]
for each $0 \leq j \leq \ell/2$.

\begin{proof}[Proof of Proposition \ref{prop:intertwine}] \let\qedsymbol\relax
In our computation of the image of $f_{\ell}(g,s)$ under the intertwining operator $M(w,s)$, we will use the so-called cocycle property which was given in Theorem 4.2.2 in \cite{shahidiBook}. This theorem implies that $M(w,s)$ can be viewed as a composition of intertwiners that are associated to simple reflections.

To apply the cocycle property, we record how the simple reflections in the product $w=[412434214]$ move the character $\lambda_s = (s-3)r_1 + (s-2)r_2 + (-1)r_3$ around, and how the associated one-dimensional intertwining operators act on the inducing section $f_{\ell}(g,s).$ This is provided in Table~\ref{table:Inter}.

\begin{table}[h]
\caption{Intertwining operators}\label{table:Inter}
\bgroup
\def\arraystretch{2.3}
\begin{tabular}{|c|c|c|}
	\hline
Simple reflection & Intertwiner & New character \\ \hline
$[4]$ & $\displaystyle \frac{\Lambda(s-1)}{\Lambda(s)}[s-1;f_{+},f_{-}]$ & $(s-3)r_1 + (-1)r_2 + (s-2)r_3$ \\ \hline
$[1]$ & $\displaystyle \frac{\Lambda(s-2)}{\Lambda(s-1)}[s-2;x,y]$ & $(-1)r_1 + (s-3)r_2 + (s-2)r_3$\\ \hline
$[2]$ & $\displaystyle \frac{\Lambda(s-2)}{\Lambda(s-1)}[s-2;x,y]$ & $(-1)r_1 + (s-3)r_2 + (2-s)r_4$\\ \hline
$[4]$ & $\displaystyle \frac{\Lambda(s-3)}{\Lambda(s-2)}[s-3;f_{+},f_{-}]$ & $(-1)r_1 + (s-3)r_3 + (2-s)r_4$ \\ \hline
$[3]$ & $\displaystyle \frac{\Lambda(2s-5)}{\Lambda(2s-4)}[2s-5;x,y]$ & $(-1)r_1 + (2-s)r_3 + (s-3)r_4$\\ \hline
$[4]$ & $\displaystyle \frac{\Lambda(s-2)}{\Lambda(s-1)}[s-2;f_{+},f_{-}]$ & $(-1)r_1 + (2-s)r_2 + (s-3)r_4$\\ \hline
$[2]$ & $\displaystyle \frac{\Lambda(s-3)}{\Lambda(s-2)}[s-3;x,y]$ & $(-1)r_1 + (2-s)r_2 + (3-s)r_3$\\ \hline
$[1]$ & $\displaystyle \frac{\Lambda(s-3)}{\Lambda(s-2)}[s-3;x,y]$ & $(2-s)r_1 + (-1)r_2 + (3-s)r_3$\\ \hline
$[4]$ & $\displaystyle \frac{\Lambda(s-4)}{\Lambda(s-3)}[s-4;f_{+},f_{-}]$ & $(2-s)r_1 + (3-s)r_2 + (-1)r_3$\\ \hline
\end{tabular}
\egroup
\end{table}

The notation in this table has the following meaning, as we explain by an example. Set
\[
\lambda_s' = (s-3)r_1 + (-1)r_2 + (s-2)r_3.
\]
When we apply the intertwining operator associated to the reflection $[4]$ to the inducing section $f_{\ell}(g,s)$, we obtain the unique $K_f$-spherical, $K_\infty$-equivariant element of $\mathrm{Ind}(\delta_B^{1/2} \lambda_s')$, whose value at $g=1$ is
\[
\frac{\Lambda(s-1)}{\Lambda(s)}[s-1;f_{+}, f_{-}](x^{\ell} y^{\ell}).
\]
Denote the resulting inducing section by $f_{\ell}'(g,s)$, and set
\[
\lambda_{s}'' = (-1)r_1 + (s-3)r_2 + (s-2)r_3.
\]
Then, if we apply the intertwining operator associated to the reflection $[1]$ to $f_{\ell}'(g,s)$, then we obtain the unique $K_f$-spherical, $K_\infty$-equivariant element of $\mathrm{Ind}(\delta_B^{1/2} \lambda_s'')$, whose value at $g=1$ is
\[
\bigg(\frac{\Lambda(s-2)}{\Lambda(s-1)}[s-2;x,y]\circ \frac{\Lambda(s-1)}{\Lambda(s)}[s-1;f_{+}, f_{-}]\bigg)(x^{\ell} y^{\ell}).
\]

Note that the terms in Table \ref{table:Inter} that are in the form of ratios of $\Lambda$-values follow from a formula of Gindikin and Karpelevich, while the terms such as $[s-1;f_{+},f_{-}]$ and $[s-2;x,y]$ arise due to the fact that our inducing section is not spherical at the archimedean place. We postpone providing a complete justification of the operators in Table \ref{table:Inter} until the next section. Granted this, the $\Lambda$-values multiply to
\[
  \frac{\Lambda(s-3)^2 \Lambda(s-4)\Lambda(2s-5)}{\Lambda(s-1)^2\Lambda(s)\Lambda(2s-4)}
\]
(see also Table 12 in \cite{segalEis}). The other terms give the polynomial intertwiner
\begin{equation}
\begin{split}\label{eqn:Mpoly}
  M_{\textrm{poly}}(s)
  = &\, [s-4;f_{+},f_{-}] \circ [s-3;x,y]^2 \circ [s-2;f_{+},f_{-}] \circ [2s-5;x,y]  \\
  & \circ [s-3;f_{+},f_{-}] \circ [s-2;x,y]^2 \circ [s-1;f_{+},f_{-}].
\end{split}
\end{equation}
The proposition now follows from the following proposition.
\end{proof}

\begin{proposition}\label{prop:Polyintertwine}
Let $M_{\operatorname{poly}}(s)$ be as in \eqref{eqn:Mpoly}. We have
\[
  M_{\operatorname{poly}}(s) x^{\ell} y^{\ell} = c_{\operatorname{poly},\ell}(s) x^{\ell} y^{\ell},
\]
where
	\begin{align*} c_{\operatorname{poly},\ell}(s)
	&= \frac{(s-3) (s-4)^2 (s-5)^2 \cdots (s-\ell-2)^2 (s-\ell-3)}{(s+\ell-2)(s+\ell-3)^2 (s+\ell-4)^2 \cdots (s-1)^2 (s-2)} \\ &=\frac{\Gamma(s-2)\Gamma(s-3)\Gamma(s-2)\Gamma(s-1)}{\Gamma(s-\ell-3)\Gamma(s-\ell-2)\Gamma(s+\ell-1)\Gamma(s+\ell-2)}. 	\end{align*}
\end{proposition}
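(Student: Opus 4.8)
The plan is to compute $M_{\operatorname{poly}}(s)$ acting on the single vector $x^\ell y^\ell$ by tracking it through the seven diagonal operators in \eqref{eqn:Mpoly} from right to left, using only the explicit formulas for $[s;x,y]$ and $[s;f_+,f_-]$. The crucial simplification is that $x^\ell y^\ell$ is \emph{not} a fixed vector of either diagonal operator, so the first step is to understand how each operator moves one between the $\{x,y\}$-basis and the $\{f_+,f_-\}$-basis of $V_{\operatorname{even}}$. I would first record the change-of-basis: since $x = \tfrac12(f_+ + f_-)$ and $y = \tfrac12(f_+ - f_-)$, one expands $x^{2\ell-2j}y^{2j} + x^{2j}y^{2\ell-2j}$ in the $f_\pm$-monomials (and vice versa); in particular $x^\ell y^\ell = \tfrac{1}{2^{2\ell}}(f_+^2 - f_-^2)^\ell = \tfrac{1}{2^{2\ell}}\sum_{k}\binom{\ell}{k}(-1)^k f_+^{2\ell-2k}f_-^{2k}$. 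The key structural fact I expect to need is that the ``central'' symmetric vector $x^\ell y^\ell$ (the $j = \ell/2$ term, where the Pochhammer ratio is $(\cdot)_0/(\cdot)_0 = 1$) has a controlled image, and that the whole composition is engineered so that after all seven operators one returns to a multiple of $x^\ell y^\ell$.

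The cleanest route is probably \emph{not} to grind through all intermediate vectors, but to exploit a symmetry/recursion. I would look for an auxiliary claim of the following shape: each pair ``$[a;f_+,f_-]\circ[b;x,y]$'' or similar, when restricted to the relevant invariant line, contributes a single rational factor, and the product of these factors telescopes. Concretely, I expect that applying the operators in the stated order to $x^\ell y^\ell$ only ever produces vectors supported on the ``extreme'' terms, or that the coefficient of $x^\ell y^\ell$ (resp. $f_+^\ell f_-^\ell$) can be extracted at each stage by a short computation with binomial sums, each collapsing via a Vandermonde-type identity. The parameters $s-1, s-2, s-2, s-3, 2s-5, s-3, s-4$ appearing in \eqref{eqn:Mpoly} are exactly those that make the Pochhammer numerators and denominators cascade: one should see $(s+\ell-2), (s+\ell-3)^2, \ldots, (s-1)^2, (s-2)$ accumulate in the denominator and $(s-3),(s-4)^2,\ldots,(s-\ell-3)$ in the numerator, matching $c_{\operatorname{poly},\ell}(s)$. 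The final equality with the ratio of Gamma functions is then just the identity $(z)_k = \Gamma(z+k)/\Gamma(z)$ applied termwise, together with $\Gamma(s-2)\Gamma(s-1)/[\Gamma(s+\ell-1)\Gamma(s+\ell-2)]$ collecting the ``ascending'' products and $\Gamma(s-2)\Gamma(s-3)/[\Gamma(s-\ell-2)\Gamma(s-\ell-3)]$ collecting the ``descending'' ones.

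An alternative, and perhaps more robust, approach is induction on $\ell$ (through even values). For $\ell = 2$ one checks the identity by a direct finite computation in the small space $V_{\operatorname{even}}$, which is only spanned by $\{x^4 + y^4, x^2 y^2\}$ together with their $f_\pm$-analogues; this pins down the base case and, importantly, reveals the combinatorial mechanism. For the inductive step, I would isolate how passing from $\ell$ to $\ell+2$ inserts precisely the two extra factors $(s-\ell-3)/(s+\ell-2)$ and $(s-\ell-4)\cdots$ into $c_{\operatorname{poly},\ell}(s)$, matching the two new Pochhammer indices $|\tfrac{\ell}{2}-j|$ that become available — this is really a statement that the new highest-weight directions decouple from the old ones under the composition. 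Either way, the main obstacle is bookkeeping: keeping track of which basis ($x,y$ versus $f_+,f_-$) each intermediate vector lives in, and verifying that the off-diagonal components (the non-central $j \neq \ell/2$ terms generated by the change of basis) either cancel in the end or never contribute to the $x^\ell y^\ell$-coefficient. I expect the verification that these spurious terms wash out — equivalently, that $M_{\operatorname{poly}}(s)$ really is scalar on the line $\C x^\ell y^\ell$ and not merely that its $x^\ell y^\ell \to x^\ell y^\ell$ matrix entry equals $c_{\operatorname{poly},\ell}(s)$ — to be the delicate point, and I would handle it by showing at each stage that the vector remains in a small explicitly-described subspace (ideally one-dimensional after accounting for the $\pm$ symmetry) on which the next operator acts by a scalar.
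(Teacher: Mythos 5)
Your overall architecture is the right one, and in fact it is the paper's: one groups the nine reflections in \eqref{eqn:Mpoly} into two-step blocks of the form $[a;x,y]\circ[a+1;f_{+},f_{-}]$ (and the mirror form), each of which carries the line $\C\, x^{\ell}y^{\ell}$ onto the line $\C\, f_{+}^{\ell}f_{-}^{\ell}$ (or back) with a single rational factor; the lone middle operator $[2s-5;x,y]$ fixes $x^{\ell}y^{\ell}$ because its Pochhammer index $|\ell/2-j|$ vanishes at $j=\ell/2$; and the four block factors multiply to $c_{\operatorname{poly},\ell}(s)$, the Gamma-quotient form being routine bookkeeping. However, the entire mathematical content of the proposition is the block statement itself — that $\bigl([s-1;x,y]\circ[s;f_{+},f_{-}]\bigr)(x^{\ell}y^{\ell})$ is an \emph{exact} multiple of $f_{+}^{\ell}f_{-}^{\ell}$, together with the explicit value of that multiple — and this is precisely what you leave as an expectation (``I would look for an auxiliary claim of the following shape\ldots''). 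Without it the telescoping never starts and the scalar is not determined. In the paper this is Lemma \ref{lem:c's}, and its proof is genuinely nontrivial: one expands $x^{\ell}y^{\ell}$ in $f_{\pm}$-monomials, applies both diagonal operators, and the collapse onto $(x^{2}-y^{2})^{\ell}=f_{+}^{\ell}f_{-}^{\ell}$ comes from the generating-function identity of Lemma \ref{lem:Fuv} for $(1-2u-2v+(u-v)^{2})^{-w}$, via the cancellation that $p_{j,k}(w)\,\Gamma(w+j+\tfrac12)\Gamma(w+k+\tfrac12)$ is independent of $(j,k)$ when $j+k=\ell$. Your gesture toward ``binomial sums collapsing via a Vandermonde-type identity'' points in a plausible direction (the collapse is indeed a closed-form hypergeometric evaluation), but no such computation is carried out, and the constant $c'(s)$, whose shifted product actually produces $c_{\operatorname{poly},\ell}(s)$, is never derived.

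Two further points. First, one of the mechanisms you float is incorrect: the intermediate vectors are \emph{not} supported on extreme monomials — after the first operator of a block the vector is a full combination of all $f_{+}^{2\ell-2j}f_{-}^{2j}$ — and it is only after the second operator that it collapses back onto a central line. You are right that proving membership in that line (rather than merely extracting the $x^{\ell}y^{\ell}$-matrix entry) is the delicate point, since any off-line component would be propagated by the remaining operators; but you offer no argument for it beyond the hope of a ``small explicitly-described subspace.'' Second, the fallback induction on $\ell$ is not substantiated: the spaces $V_{\operatorname{even}}$ and all the diagonal operators change with $\ell$, and the claimed ``decoupling of the new highest-weight directions'' is asserted without any mechanism, so it cannot currently serve as a proof either.
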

In other words, the above proposition proves that $x^{\ell}y^{\ell}$ is an eigenvector for the operator $M_{\textrm{poly}}(s)$ with the eigenvalue $c_{\textrm{poly},\ell}(s)$. This proposition will be proved in Section \ref{subsec:polyInter}.

\subsection{The root intertwiners}
The purpose of this section is to explain the presence of the terms that appear in the ``Intertwiner" column of Table \ref{table:Inter}. We require the following lemma.

\begin{lemma}\label{lem:SL2inter}
Let $B$ denote the upper-triangular Borel of $\SL_2$. For $\theta \in \R$, set $k_\theta = \mm{\cos{\theta}}{-\sin{\theta}}{\sin{\theta}}{\cos{\theta}}$. Suppose that $f_{\SL_2,j}(g,s)$ is the section in $\mathrm{Ind}_{B(\R)}^{\SL_2(\R)}(\delta_B^{1/2}\delta_B^{s/2})$ that satisfies
\[
f_{\SL_2,j}\big(gk_\theta,s\big) = e^{i j \theta} f_{\SL_2,j}(g,s)
\]
for all $g \in \SL_2(\R)$ and $k_\theta \in \SO(2)$ as above. Then
	\[
\int_{\R}	f_{\SL_2,j}\bigg(
  \Big(
  \begin{array}{cc}{0}&{-1}\\{1}&{0}\end{array}\Big)
  \Big(\begin{array}{cc}{1}&{x}\\{0}&{1}
  \end{array}\Big)g, s
  \bigg) \, dx = i^{j} \frac{\Gamma_\C(s)}{\Gamma_\R(s-j+1)\Gamma_\R(s+j+1)} f_{\SL_2,j}(g,-s).
	\]
If $j$ is even, then this becomes
	\[
\int_{\R}	f_{\SL_2,j}\bigg(
  \Big(
  \begin{array}{cc}{0}&{-1}\\{1}&{0}\end{array}\Big)
  \Big(\begin{array}{cc}{1}&{x}\\{0}&{1}
  \end{array}\Big)g, s
  \bigg) \, dx =\frac{\Gamma_\R(s)}{\Gamma_\R(s+1)} \frac{\left(\frac{1-s}{2}\right)_{|j/2|}}{\left(\frac{1+s}{2}\right)_{|j/2|}} f_{\SL_2,j}(g,-s).
	\]
\end{lemma}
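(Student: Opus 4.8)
The plan is to reduce everything to a single classical archimedean intertwining computation for $\SL_2(\R)$. First I would recall the standard model of $\mathrm{Ind}_{B(\R)}^{\SL_2(\R)}(\delta_B^{1/2}\delta_B^{s/2})$: the function $f_{\SL_2,j}(g,s)$ is determined by its restriction to $\SO(2)$, where it is $e^{ij\theta}$ on $k_\theta$, together with its transformation under the diagonal torus. Using the Iwasawa decomposition one writes $\left(\begin{smallmatrix} 0 & -1 \\ 1 & 0\end{smallmatrix}\right)\left(\begin{smallmatrix} 1 & x \\ 0 & 1\end{smallmatrix}\right) = n(x') a(t) k_{\theta(x)}$ explicitly in terms of $x$: one finds $t = (1+x^2)^{-1/2}$ and $e^{i\theta(x)} = (x - i)/\sqrt{1+x^2}$ (up to the usual sign conventions). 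Substituting this in, the integral becomes
\[
\int_\R f_{\SL_2,j}(g, s)\big|_{\text{evaluated appropriately}} \;dx = \left(\int_\R (1+x^2)^{-(s+1)/2}\Big(\frac{x-i}{\sqrt{1+x^2}}\Big)^{j}\,dx\right) f_{\SL_2,j}(g,-s),
\]
so the whole content is the evaluation of the scalar integral $\int_\R (x-i)^j (1+x^2)^{-(s+1+j)/2}\,dx$. This is a classical beta-type integral: expanding and using the substitution $x = \tan\phi$ (or directly citing the Fourier transform of $(1+x^2)^{-a}$ / a standard table integral), one gets it in terms of $\Gamma$-functions, and after collecting factors of $2$ and $\pi$ into the $\Gamma_\R$, $\Gamma_\C$ normalizations it becomes $i^j \Gamma_\C(s)/(\Gamma_\R(s-j+1)\Gamma_\R(s+j+1))$.

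For the second, "$j$ even" assertion, I would start from the first formula and simplify the ratio $\Gamma_\C(s)/(\Gamma_\R(s-j+1)\Gamma_\R(s+j+1))$. Writing $\Gamma_\R(s) = \pi^{-s/2}\Gamma(s/2)$ and $\Gamma_\C(s) = 2(2\pi)^{-s}\Gamma(s)$, the powers of $\pi$ and $2$ combine (using the duplication formula $\Gamma(s) = 2^{s-1}\pi^{-1/2}\Gamma(s/2)\Gamma((s+1)/2)$) to collapse most of the ratio to $\Gamma_\R(s)/\Gamma_\R(s+1)$ times a ratio of $\Gamma$-values at half-integers; and since $j$ is even, say $j = 2m$, the leftover $\Gamma$-quotient is exactly $\Gamma((1-s)/2 + |m|)\Gamma((1+s)/2) / (\Gamma((1-s)/2)\Gamma((1+s)/2 + |m|))$, which by the definition of the Pochhammer symbol equals $\big(\tfrac{1-s}{2}\big)_{|j/2|} / \big(\tfrac{1+s}{2}\big)_{|j/2|}$. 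The factor $i^j = i^{2m} = (-1)^m$ must be checked to be absorbed correctly; in fact one should verify that reversing $x\mapsto -x$ shows the scalar integral is real when $j$ is even, which pins down the sign and makes $i^j$ disappear (equivalently, $(-1)^m$ is already built into the Pochhammer ratio being taken with absolute value $|m|$ — I would double-check this bookkeeping carefully).

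The main obstacle I anticipate is purely the normalization bookkeeping: matching the naive $\Gamma$-function answer of the scalar integral against the $\Gamma_\R$/$\Gamma_\C$ conventions fixed in the paper, and getting the sign/power-of-$i$ right in both the general and the even case. The underlying analysis (convergence for $\mathrm{Re}(s)$ large, meromorphic continuation, the Iwasawa coordinates) is entirely standard, so I would state it briefly and cite a reference such as \cite{shahidiBook} for the general shape of the rank-one archimedean intertwining integral, then devote the actual work to the explicit evaluation and the duplication-formula simplification that produces the Pochhammer ratio.
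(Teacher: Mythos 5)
Your proposal follows essentially the same route as the paper: Iwasawa-decompose $w\,n(x)$, reduce to the scalar integral $\int_{\R}(x+i)^{j}(1+x^2)^{-(s+1+j)/2}\,dx$ (which the paper evaluates, via a table/classical reference, in the form $\int_{\R}(x+i)^{-(s-j+1)/2}(x-i)^{-(s+j+1)/2}\,dx$), and for even $j$ use the duplication formula to rewrite the answer as $\frac{\Gamma_\R(s)}{\Gamma_\R(s+1)}$ times a Pochhammer quotient. The only bookkeeping left is exactly what you flag: with the paper's conventions the $K$-phase is $(x+i)/\sqrt{1+x^2}$ (so the first formula carries $i^{j}$), and the leftover gamma quotient equals $(-1)^{j/2}\left(\tfrac{1-s}{2}\right)_{|j/2|}\big/\left(\tfrac{1+s}{2}\right)_{|j/2|}$, so the sign disappears because $i^{j}(-1)^{j/2}=1$ for even $j$.
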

\begin{proof}
The proof is standard. To give some details anyway, we consider the case $g=1$. Note that
	\[
	\mm{0}{-1}{1}{0} \mm{1}{x}{0}{1}
	= \mm{1}{-x(x^2+1)^{-1}}{0}{1} \mm{(x^2+1)^{-1/2}}{0}{0}{(x^2+1)^{1/2}}\mm{x (x^2+1)^{-1/2}}{- (x^2+1)^{-1/2}}{ (x^2+1)^{-1/2}}{x  (x^2+1)^{-1/2}}.
	\]
From this, we obtain
	\begin{align*}
\int_{\R}	f_{\SL_2,j}\bigg(
  \Big(
  \begin{array}{cc}{0}&{-1}\\{1}&{0}\end{array}\Big)
  \Big(\begin{array}{cc}{1}&{x}\\{0}&{1}
  \end{array}\Big), s
  \bigg) \, dx
	&= \int_{\R}{(x^2+1)^{-(s+1)/2} \bigg(\frac{x+i}{(x^2+1)^{1/2}}\bigg)^{j}\,dx} \\
	&= \int_{\R}{(x+i)^{-(s-j+1)/2} (x-i)^{-(s+j+1)/2}\,dx}.
	\end{align*}
This last integral is evaluated in \cite[p. 279]{miyake}, which gives
	\[
	i^{j} 2^{1-s}\pi \frac{\Gamma(s)}{\Gamma(\frac{s-j+1}{2})\Gamma(\frac{s+j+1}{2})} = i^{j} \frac{\Gamma_\C(s)}{\Gamma_\R(s-j+1)\Gamma_\R(s+j+1)}.
	\]
Note that when $j$ is even,
	\begin{align*}
	\frac{\Gamma_\C(s)}{\Gamma_\R(s-j+1)\Gamma_\R(s+j+1)}
	&= \frac{\Gamma_\R(s)\Gamma_\R(s+1)}{\Gamma_\R(s-j+1)\Gamma_\R(s+j+1)}\\
	&= \frac{\Gamma_\R(s)}{\Gamma_\R(s+1)} \frac{\Gamma_\R(s+1)\Gamma_\R(s+1)}{\Gamma_\R(s-j+1)\Gamma_\R(s+j+1)},
\end{align*}
	and
	\begin{align*}
	&\frac{\Gamma_\R(s+1)\Gamma_\R(s+1)}{\Gamma_\R(s-j+1)\Gamma_\R(s+j+1)} \\
	&\quad = \frac{\Gamma((s+1)/2)^2}{\Gamma((s-j+1)/2)\Gamma((s+j+1)/2)}
	= \frac{\left(\frac{s+1-|j|}{2}\right)_{|j/2|}}{\left(\frac{s+1}{2}\right)_{|j/2|}}
	= (-1)^{j/2} \frac{\left(\frac{1-s}{2}\right)_{|j/2|}}{\left(\frac{1+s}{2}\right)_{|j/2|}}.
\end{align*}
The result then follows.
\end{proof}

Before applying the above lemma, we note the following calculations. For each positive root $\alpha$, let $\varphi_{\alpha}: \SL_2 \rightarrow G$ be the root $\SL_2$ that is determined by a pinning of $G$. This pinning is assumed to be compatible with the Cartan involutions, that is, $\theta(\varphi_{\alpha}(g)) = \varphi_{\alpha}(\,^{t}g^{-1})$.  On the Lie algebra level, the root $\sl_2$'s give rise to the elements $d\varphi_{\alpha}\left(\mm{0}{1}{-1}{0}\right)$ in the Lie algebra of $G$.  We list these elements now.

\begin{itemize}
\setlength\itemsep{0.3em}
	\item $\displaystyle \alpha_1 = r_1 - r_2, \, b_1 \wedge b_{-2} + b_{-1} \wedge b_2 = u_1 \wedge u_2 - u_{-1} \wedge u_{-2} = -\frac{i}{2}(h^{+}+h'^{+} + h^{-} + h'^{-})$,
	\item $\displaystyle \alpha_2 = r_3+r_4, \, b_3 \wedge b_4 + b_{-3} \wedge b_{-4} = v_1 \wedge v_2 + v_{-1} \wedge v_{-2} = -\frac{i}{2}(h^{+}-h'^{+} - h^{-} + h'^{-})$,
	\item $\displaystyle \alpha_3 = r_3 -r_4, \, b_3 \wedge b_{-4} + b_{-3} \wedge b_4 = v_1 \wedge v_2 - v_{-1} \wedge v_{-2} = -\frac{i}{2}(h^{+}-h'^{+} + h^{-} - h'^{-})$,
	\item $\displaystyle \alpha_4 = r_2-r_3, \, b_2 \wedge b_{-3} + b_{-2} \wedge b_{3} = u_2 \wedge v_1 - u_{-2} \wedge v_{-1} = -\frac{i}{2}(-H^+-H'^+ +H^{-}+ H'^{-})$ where $H^{?} = e^{?}+ f^{?}$ for $? \in \{+,\,'+,-,\,'-\}$.
\end{itemize}

By combining these calculations with Lemma \ref{lem:SL2inter}, we arrive at the proposition below. Let $w_j \in N(T)_{\Q} \cap (K_f K)$ be the simple reflection in the Weyl group that corresponds to the simple root $\alpha_j$. The proposition computes the rank one intertwining operator associated to $w_j$ on the inducing sections that arise in Table \ref{table:Inter}. Below, $U_{\alpha_j}$ denotes the unipotent subgroup of $G$ that corresponds to the simple root $\alpha_j$.

\begin{proposition}
Suppose that $\lambda = \alpha_1 r_1 + \alpha_2 r_2 + \alpha_3 r_3 + \alpha_4 r_4$ is an unramified character of $T(\A)$. Let $f \in \operatorname{Ind}_{B(\A)}^{G(\A)}(\delta_B^{1/2} \lambda)$ be $\mathbf{V}_{\ell}$-valued such that $f$ is $K$-equivariant, $K_f$-invariant, and $f(1) \in V_{\operatorname{even}}$. Set $s = \langle \alpha_j^\vee, \lambda \rangle$, where $\alpha_j^\vee$ is the coroot associated to $\alpha_j$. If $s >1$, then the integral
	\[
	M(w_j)f(g) = \int_{U_{\alpha_j}(\A)}{f(w_j^{-1}xg)\,dx}
	\]
is absolutely convergent. Moreover, the value $M(w_j)f(g)$ is the unique $\mathbf{V}_{\ell}$-valued, $K$-equivariant, $K_f$-invariant element of $\operatorname{Ind}_{B(\A)}^{G(\A)}(\delta_B^{1/2}w_j(\lambda))$, and its value at $g=1$ is
\[
M(w_j)f(1) =
\begin{dcases} \frac{\Lambda(s)}{\Lambda(s+1)}[s;x,y]f(1) &\text{ if } \,\, j =1,2,3, \\ \frac{\Lambda(s)}{\Lambda(s+1)}[s;f_{+},f_{-}]f(1)&\text{ if } \,\, j = 4.
\end{dcases}
\]
\end{proposition}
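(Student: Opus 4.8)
The plan is to reduce this global rank-one intertwining integral to a computation on the root $\SL_2$ attached to $\alpha_j$ and then to invoke Lemma~\ref{lem:SL2inter}. I would take the representative $w_j = \varphi_{\alpha_j}(\mm{0}{1}{-1}{0})$, which lies in $K_f K$ because the pinning $\varphi_{\alpha_j}$ is compatible with the Cartan involutions. Since the root subgroup $U_{\alpha_j}$ is also contained in the image of $\varphi_{\alpha_j}$, for $x \in U_{\alpha_j}$ the element $w_j^{-1}x$ equals $\varphi_{\alpha_j}(\mm{0}{-1}{1}{0}\mm{1}{u}{0}{1})$ for the corresponding parameter $u$. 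I would then check that the restriction $f \circ \varphi_{\alpha_j}$ is a $\mathbf{V}_\ell$-valued flat section of the $\SL_2$ principal series $\operatorname{Ind}(\delta_B^{1/2}\delta_B^{s/2})$ ($B$ now denoting the Borel of $\SL_2$) from Lemma~\ref{lem:SL2inter}: left-equivariance under the Borel of $\SL_2$ follows from the left $B(\A)$-equivariance of $f$ together with $\varphi_{\alpha_j}(\operatorname{diag}(a,a^{-1})) = \alpha_j^\vee(a)$ and the identities $\langle \alpha_j^\vee, \delta_B^{1/2}\rangle = 1$ and $\langle \alpha_j^\vee, \lambda\rangle = s$; right $\SL_2(\widehat{\Z})$-invariance follows because the pinning is defined over $\Z$ and $\varphi_{\alpha_j}(\SL_2(\Z_p)) \subseteq K_p$ for every $p$; and the right $\varphi_{\alpha_j}(\SO(2))$-behaviour is dictated by the $K_\infty$-action on $\mathbf{V}_\ell$. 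With Haar measures normalized compatibly, $M(w_j)f(1)$ is then exactly the $\SL_2$ intertwining integral of Lemma~\ref{lem:SL2inter} applied to $f \circ \varphi_{\alpha_j}$, which also yields absolute convergence for $s > 1$.

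Next I would compute the local factors. At each finite prime $p$, $f_p \circ \varphi_{\alpha_j}$ is the normalized spherical section, and the classical Gindikin--Karpelevich evaluation gives that the local integral is $\zeta_p(s)/\zeta_p(s+1)$ times the spherical section of the $w_j$-transformed induced representation. The archimedean place is where the dichotomy in the statement originates. Using the explicit expressions for $d\varphi_{\alpha_j}(\mm{0}{1}{-1}{0})$ recorded just before the statement, together with the fact that in the $K_\infty$-representation $\C^2$ only the third $\sl_2$ of $\operatorname{Lie}(K_\infty)\otimes \C$ — the image of the long root $\sl_2$ of $G_2$ — acts nontrivially, one finds that $d\varphi_{\alpha_j}(\mm{0}{-1}{1}{0})$ acts on $\mathbf{V}_\ell$ as $\pm\tfrac{i}{2}h^-$ when $j \in \{1,2,3\}$ and as $\tfrac{i}{2}H^- = \tfrac{i}{2}(e^- + f^-)$ when $j = 4$. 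Hence the circle $\varphi_{\alpha_j}(\SO(2))$ acts on $\mathbf{V}_\ell = \operatorname{Sym}^{2\ell}(\C^2)$ diagonally in the monomial basis $\{x^{2\ell-2k}y^{2k}\}$ when $j \in \{1,2,3\}$, and diagonally in the basis $\{f_+^{2\ell-2k}f_-^{2k}\}$ when $j = 4$ (because $f_\pm = x \pm y$ are the $\pm 1$-eigenvectors of $e^- + f^-$); in either case the relevant basis vector has circle weight $\ell - 2k$, which is even since $\ell$ is even. Decomposing $f \circ \varphi_{\alpha_j}$ at $\infty$ into these weight components and applying the even case of Lemma~\ref{lem:SL2inter} to each, the archimedean contribution to $M(w_j)f(1)$ is the scalar $\Gamma_\R(s)/\Gamma_\R(s+1)$ times precisely the Pochhammer ratios defining $[s;x,y]$ (resp.\ $[s;f_+,f_-]$), with no leftover power of $i$.

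Finally I would assemble the pieces: the product over all places is $\prod_p \tfrac{\zeta_p(s)}{\zeta_p(s+1)}\cdot \tfrac{\Gamma_\R(s)}{\Gamma_\R(s+1)} = \tfrac{\Lambda(s)}{\Lambda(s+1)}$, multiplied by $[s;x,y]f(1)$ (resp.\ $[s;f_+,f_-]f(1)$). By the standard equivariance properties of intertwining operators the resulting function lies in $\operatorname{Ind}_{B(\A)}^{G(\A)}(\delta_B^{1/2}w_j(\lambda))$, it is $K_f$-invariant, and it transforms on the right under $K_\infty$ exactly as $f$ does; since $G(\A) = B(\A)\bigl(\prod_p K_p \times K_\infty\bigr)$, it is the unique section of the stated type with the asserted value at $g = 1$. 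I expect the main obstacle to be the archimedean bookkeeping in the middle step: keeping straight the four $\sl_2$'s inside $\operatorname{Lie}(K_\infty)$, the embedding $G_2 \subseteq \Spin(8)$, and the normalization of the pinning $\varphi_{\alpha_j}$ carefully enough that the sign and the $i$-factor in Lemma~\ref{lem:SL2inter} combine to land exactly on the operators $[s;x,y]$ and $[s;f_+,f_-]$ rather than a scalar multiple of them. The finite-place input is the routine Gindikin--Karpelevich computation.
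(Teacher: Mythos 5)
Your proposal is correct and follows essentially the same route as the paper's proof: pull the integral back to $\SL_2$ through the pinning $\varphi_{\alpha_j}$, identify $f\circ\varphi_{\alpha_j}$ as a section of the $\SL_2$ principal series of Lemma \ref{lem:SL2inter}, obtain $\zeta(s)/\zeta(s+1)$ from the finite places, and read off the archimedean diagonal action from the listed elements $d\varphi_{\alpha_j}\left(\begin{smallmatrix}0&1\\-1&0\end{smallmatrix}\right)$, which act through $h^-$ (monomial basis, giving $[s;x,y]$) for $j=1,2,3$ and through $H^-=e^-+f^-$ ($f_\pm$ basis, giving $[s;f_+,f_-]$) for $j=4$. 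The only difference is that you spell out the $j=4$ case and the spherical/equivariance bookkeeping that the paper dismisses as standard.
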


\begin{proof}
The proof is standard except the computation of the value of $M(w_j)f(g)$ at $g=1$.  We explain the case $j=1$ as the other cases are similar.  To evaluate $M(w_1)f(1)$, we use a pinning of $G$ to pull back the calculation to $\SL_2$. Thus, we assume that $\varphi_{\alpha_1}: \SL_2 \rightarrow G$ is compatible with the integral structures, the Cartan involution, and that it satisfies the properties
\[
  \varphi_{1}\Big(
  \begin{array}{cc}{0}&{1}\\{-1}&{0}\end{array}\Big) = w_1
  \quad \text{and}
  \quad \varphi_{1}
  \Big(\begin{array}{cc}{1}&{*}\\{0}&{1}
  \end{array}\Big) = U_{\alpha_1}.
\]
We can also assume that
  \[
  d\varphi_1\Big(
  \begin{array}{cc}{0}&{1}\\{-1}&{0}\end{array}\Big)  = b_1 \wedge b_{-2} + b_{-1} \wedge b_2.
  \]
Then we have
\[
  M(w_1)f(1)
  = \int_{\A}{f\bigg(\varphi_1
  \Big(
  \begin{array}{cc}{0}&{1}\\{-1}&{0}\end{array}\Big)
  \Big(\begin{array}{cc}{1}&{x}\\{0}&{1}
  \end{array}\Big)
  \bigg)\,dx}.
\]
The function $f \circ \varphi_{1}$ on $\SL_2$ is an element of the induction space corresponding to $\delta_B^{(s+1)/2}$, as in Lemma \ref{lem:SL2inter}. The integrals over the finite places give $\frac{\zeta(s)}{\zeta(s+1)}$, as standard. The computation of the integral over $\R$ follows from the fact that $d\varphi_1(\mm{0}{1}{-1}{0})$ acts on $x^{2\ell-2j}y^{2j}$ the same way as $-\frac{i}{2} h_{\ell}^{-}$, which acts by $-i(\ell-2j)$. The result thus follows from Lemma \ref{lem:SL2inter}.
\end{proof}

The above proposition yields the intertwining operators that appear in Table \ref{table:Inter}.


\subsection{The polynomial intertwiner}\label{subsec:polyInter}
The purpose of this section is to prove Proposition \ref{prop:Polyintertwine}, from which Proposition \ref{prop:intertwine} follows. The proof of Proposition \ref{prop:Polyintertwine} requires the following two lemmas.

\begin{lemma}\label{lem:Fuv}
Let $u, v$ be variables and $w$ be a complex parameter. Also, put
  \[
  F_w(u,v) = \big(1-2u-2v + (u-v)^2\big)^{-w}.
  \]
Then
	\[
	F_{w}(u,v) = (1-2u-2v + (u-v)^2)^{-w} = \sum_{j, k \geq 0} p_{j,k}(w) \frac{u^j v^k}{j! k!},
	\]
where
   \[
   p_{j,k}(w) =\frac{\Gamma(2w+j+k)\Gamma(w+j+k+1/2)\Gamma(w+1/2)}{\Gamma(2w)\Gamma(w+k+1/2)\Gamma(w+j+1/2)}.
   \]
\end{lemma}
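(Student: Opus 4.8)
The plan is to derive the expansion by two successive applications of the binomial theorem and then to recognise the resulting finite sum as a terminating Gauss hypergeometric series summable by the Chu--Vandermonde identity. The starting point is the factorisation
\[
1 - 2u - 2v + (u-v)^2 = (1-u-v)^2 - 4uv ,
\]
which gives, for $u,v$ in a neighbourhood of the origin,
\[
F_w(u,v) = (1-u-v)^{-2w}\left(1 - \frac{4uv}{(1-u-v)^2}\right)^{-w}
= \sum_{m \ge 0}\frac{(w)_m}{m!}\,4^m\, u^m v^m\,(1-u-v)^{-2w-2m}.
\]
Expanding $(1-u-v)^{-2w-2m} = \sum_{a,b \ge 0}\frac{(2w+2m)_{a+b}}{a!\,b!}\,u^a v^b$ and extracting the coefficient of $u^j v^k$ (so $a = j-m$, $b = k-m$, with $0 \le m \le \min(j,k)$) yields
\[
[u^j v^k]\,F_w(u,v) = \sum_{m=0}^{\min(j,k)}\frac{4^m (w)_m}{m!}\cdot\frac{(2w+2m)_{j+k-2m}}{(j-m)!\,(k-m)!}.
\]

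Next I would simplify each summand using the duplication identity $4^m (w)_m (w+\tfrac12)_m = (2w)_{2m}$ together with $(2w)_{2m}\,(2w+2m)_{j+k-2m} = (2w)_{j+k}$, which is legitimate since $m \le \min(j,k) \le \tfrac12(j+k)$. This rewrites the sum as
\[
[u^j v^k]\,F_w(u,v) = \frac{(2w)_{j+k}}{j!\,k!}\sum_{m=0}^{\min(j,k)}\binom{j}{m}\binom{k}{m}\frac{m!}{(w+\tfrac12)_m}
= \frac{(2w)_{j+k}}{j!\,k!}\,\,{}_2F_1\!\left(-j,-k;\,w+\tfrac12;\,1\right),
\]
where the last equality uses $\binom{j}{m} = (-1)^m (-j)_m/m!$ and likewise for $\binom{k}{m}$. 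By the Chu--Vandermonde evaluation ${}_2F_1(-j,-k;c;1) = (c+k)_j/(c)_j$, this becomes
\[
[u^j v^k]\,F_w(u,v) = \frac{1}{j!\,k!}\,(2w)_{j+k}\,\frac{(w+\tfrac12+k)_j}{(w+\tfrac12)_j}.
\]
Finally I would verify that $(2w)_{j+k}\,(w+\tfrac12+k)_j/(w+\tfrac12)_j = p_{j,k}(w)$ directly, by writing each Pochhammer symbol as a ratio of Gamma functions: $(2w)_{j+k} = \Gamma(2w+j+k)/\Gamma(2w)$, $(w+\tfrac12+k)_j = \Gamma(w+j+k+\tfrac12)/\Gamma(w+k+\tfrac12)$, and $(w+\tfrac12)_j = \Gamma(w+j+\tfrac12)/\Gamma(w+\tfrac12)$, which reproduces the stated formula for $p_{j,k}(w)$.

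The argument is elementary and I do not anticipate any serious obstacle; the only point requiring care is the Pochhammer bookkeeping in the passage from the double binomial expansion to the ${}_2F_1$, in particular confirming that $j+k-2m \ge 0$ throughout so that the splitting $(2w)_{2m}(2w+2m)_{j+k-2m} = (2w)_{j+k}$ is valid. If one prefers not to invoke Chu--Vandermonde by name, the equivalent polynomial identity $\sum_m \binom{j}{m}\binom{k}{m}m!/(c)_m = (c+k)_j/(c)_j$ can instead be established by a short induction on $j$, but citing the classical evaluation keeps the proof cleaner.
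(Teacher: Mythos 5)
Your proof is correct, and it takes a genuinely different route from the paper's. You compute the Taylor coefficients of $F_w$ directly: the factorization $1-2u-2v+(u-v)^2=(1-u-v)^2-4uv$, two binomial expansions, the duplication identity $4^m(w)_m(w+\tfrac12)_m=(2w)_{2m}$ together with $(2w)_{2m}(2w+2m)_{j+k-2m}=(2w)_{j+k}$, and the Chu--Vandermonde evaluation of the terminating ${}_2F_1(-j,-k;w+\tfrac12;1)$ produce the closed form $p_{j,k}(w)$ outright, and your Pochhammer-to-Gamma bookkeeping at the end checks out. The paper argues indirectly instead: it notes that the $p_{j,k}(w)$ are polynomials in $w$, derives from the identity $\bigl(1-2u-2v+(u-v)^2\bigr)F_w=F_{w-1}$ the recursion the Taylor coefficients must satisfy, verifies that recursion as an identity of rational functions in $w,j,k$, and then concludes because the actual Taylor coefficients of $F_w$ are also polynomials in $w$ and, by inducting through negative integer values of $w$ via the recursion, agree with $p_{j,k}(w)$ at infinitely many points. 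Your approach buys an explicit, constructive derivation of the coefficient formula with no appeal to polynomiality or induction, at the cost of invoking classical hypergeometric identities; the paper's approach uses only the functional equation of $\Gamma$ and a (somewhat messy) rational-function verification, but is less direct. The one point to state carefully in your write-up is the legitimacy of expanding $\bigl(1-4uv/(1-u-v)^2\bigr)^{-w}$ and rearranging the double series: restricting to a neighbourhood of the origin, as you do, suffices (or observe that $4uv/(1-u-v)^2$ has no constant term, so the manipulation is also valid formally), and then the extraction of the coefficient of $u^jv^k$ is justified.
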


\begin{proof}
First, note that the $p_{j,k}(w)$ are polynomials in $w.$ To see this, one uses the functional equation for the gamma function, that is, $\Gamma(s+1) = s\Gamma(s)$. Now, if the polynomials $p_{j,k}(w)$ are indeed the Taylor coefficients of $F_w(u,v)$, then these polynomials must satisfy the expression
	\begin{equation}\label{eqn:pjk}
		\begin{split}
	p_{j,k}(w-1)=&\, p_{j,k}(w) - 2j p_{j-1,k}(w) - 2k p_{j,k-1}(w) + j(j-1) p_{j-2,k}(w) \\
	&- 2jkp_{j-1,k-1}(w) + k(k-1) p_{j,k-2}(w) .
	\end{split}
	\end{equation}
This relationship comes from comparing both sides of the identity
  \[
  \big(1-2u-2v + u^2 -2uv + v^2\big) \Big(\sum_{j,k} p_{j,k}(w) \frac{u^j v^k}{j! k!} \Big) = F_{w-1}(u,v) = \sum_{j,k} p_{j,k}(w-1) \frac{u^j v^k}{j! k!}.
  \]
Now, we have two claims:
	\begin{enumerate}[label=(\roman*)]
		\item One can verify \eqref{eqn:pjk} directly.
		\item Combined with the fact that the $p_{j,k}(w)$ are polynomials, \eqref{eqn:pjk} implies the lemma.
	\end{enumerate}

For the proof of the first claim, we again use the functional equation $ \Gamma(s+1)=s \Gamma(s)$ and relate the polynomials $p_{j,k}(w)$ to $p_{j,k}(w-1)$. For example, we obtain
	\[
	p_{j,k}(w) = \frac{(2w-2+j+k)(2w-1+j+k) (w+j+k-1/2)(w-1/2)}{(2w-2)(2w-1)(w+j-1/2)(w+k-1/2)} p_{j,k}(w-1).
	\]
One can obtain similar expressions relating $p_{j-1,k}(w), p_{j-1,k-1}(w),...$ to $p_{j,k}(w-1)$. Thus the identity in \eqref{eqn:pjk} becomes an identity for rational functions of $w, j, k$, which one can then verify directly.
	
For the proof of the second claim, note that the Taylor coefficients of $F_{w}(u,v)$ are necessarily polynomials in $w$. Thus, to see that they are equal to $p_{j,k}(w)$, it suffices to check that they are equal at infinitely many integers. But \eqref{eqn:pjk} allows one to induct, and thus verify the Taylor expansion of $F_w(u,v)$ for all negative integers $w$.

This completes the proof of the lemma.
\end{proof}

\begin{lemma}\label{lem:c's}
Let $f_+=x+y$ and $f_-=x-y$ as before. Then we have
\begin{enumerate}[label=(\roman*)]\setlength\itemsep{0.3em}
	\item $\displaystyle \big([s-1;x,y] \circ [s;f_{+},f_{-}] \big) (x^{\ell} y^{\ell}) = \frac{c'(s)}{2^{\ell}}  f_{+}^{\ell} f_{-}^{\ell}$\, ,
	\item $\displaystyle \big([s-1;f_{+},f_{-}] \circ [s;x,y] \big)(f_{+}^{\ell} f_{-}^{\ell}) = 2^{\ell} c'(s) x^{\ell} y^{\ell}$,
\end{enumerate}
where we set
	\[
	c'(s) = \frac{(s-\ell)(s-\ell+2) \cdots (s-4)(s-2)}{(s+1)(s+3) \cdots (s+\ell-3)(s+\ell-1)}.
	\]
\end{lemma}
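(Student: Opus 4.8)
The plan is to first observe that (i) and (ii) are equivalent, and then to prove the surviving statement by introducing the generating polynomial attached to Lemma~\ref{lem:Fuv}.

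For the equivalence, let $T$ be the linear involution of $\mathbf V_\ell=\operatorname{Sym}^{2\ell}(\C^2)$ induced on linear forms by the matrix $\mm{1}{1}{1}{-1}$, i.e.\ $x\mapsto f_+=x+y$, $y\mapsto f_-=x-y$. Then $T^2=2^{2\ell}$ on $\mathbf V_\ell$, $T(x^\ell y^\ell)=f_+^\ell f_-^\ell$ and $T(f_+^\ell f_-^\ell)=(2x)^\ell(2y)^\ell=2^{2\ell}x^\ell y^\ell$. Since $T$ carries the eigenbasis $\{x^{2\ell-2j}y^{2j}+x^{2j}y^{2\ell-2j}\}$ of $[s;x,y]$ onto the eigenbasis $\{f_+^{2\ell-2j}f_-^{2j}+f_+^{2j}f_-^{2\ell-2j}\}$ of $[s;f_+,f_-]$ with the same eigenvalues, we get $[s;f_+,f_-]\circ T=T\circ[s;x,y]$ and $[s;x,y]\circ T=T\circ[s;f_+,f_-]$ on $V_{\textrm{even}}$. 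Applying $T$ to (ii) and using $T^{-1}=2^{-2\ell}T$ then gives (i), so it suffices to prove (i).

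The key tool for (i) is the polynomial obtained from Lemma~\ref{lem:Fuv} by the substitution $u=x^2,\ v=y^2$. Since $1-2x^2-2y^2+(x^2-y^2)^2=(1-f_+^2)(1-f_-^2)$, for each even $\ell$ one has, writing $m=\ell/2$, the homogeneous degree-$2\ell$ polynomial
\[
G_w:=\big[(1-f_+^2)^{-w}(1-f_-^2)^{-w}\big]_{\deg 2\ell}=\sum_{j+k=\ell}\frac{(w)_j(w)_k}{j!\,k!}f_+^{2j}f_-^{2k}=\sum_{j+k=\ell}\frac{p_{j,k}(w)}{j!\,k!}x^{2j}y^{2k},
\]
the second expansion being Lemma~\ref{lem:Fuv} in the variables $x^2,y^2$; in particular $f_+^\ell f_-^\ell=G_{-m}$ since $(-m)_j=0$ for $j>m$, and on the slice $j+k=\ell$ the formula for $p_{j,k}$ collapses to $p_{j,k}(w)=C_w\big/\big(\Gamma(w+j+\tfrac12)\Gamma(w+k+\tfrac12)\big)$ with $C_w$ independent of $(j,k)$. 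I would then prove two ``collapse'' facts by the same mechanism:
\[
\text{(a) } [\sigma;x,y]\,G_{-\sigma/2-\ell/2}\in\C\cdot(x^2-y^2)^\ell,\qquad
\text{(b) } [s;f_+,f_-]\,(x^\ell y^\ell)\in\C\cdot G_{(1-s-\ell)/2}.
\]
For (b): $[s;f_+,f_-]$ multiplies the coefficient of $f_+^{2(\ell-k)}f_-^{2k}$ in $x^\ell y^\ell=4^{-\ell}\sum_k\binom{\ell}{k}(-1)^k f_+^{2(\ell-k)}f_-^{2k}$ by $\mu_k(s):=\big(\tfrac{1-s}2\big)_{|m-k|}\big/\big(\tfrac{1+s}2\big)_{|m-k|}$; writing $A=\tfrac{1-s}2,\ B=\tfrac{1+s}2$ and using $B=A+s$, $A+B=1$ and $\Gamma(z)\Gamma(1-z)=\pi/\sin\pi z$, one finds $(-1)^k\mu_k(s)=\mathrm{const}\cdot\Gamma(A+m-k)\Gamma(A-m+k)=\mathrm{const}\cdot(w')_{\ell-k}(w')_k$ with $w'=A-m=(1-s-\ell)/2$, which is exactly the coefficient shape of $G_{w'}$. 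For (a): with $\sigma$ in place of $s$, choosing $w=-\sigma/2-m$ so that $w+\tfrac12=\tfrac{1-\sigma}2-m$, the very same computation gives $p_{\ell-k,k}(w)\,\mu_k(\sigma)=\mathrm{const}\cdot(-1)^k$, so the image is a scalar multiple of $\sum_k\binom{\ell}{k}(-1)^k x^{2(\ell-k)}y^{2k}=(x^2-y^2)^\ell$.

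Finally, combining (b) with (a) taken at $\sigma=s-1$ (note $-\tfrac{s-1}2-\tfrac{\ell}2=\tfrac{1-s-\ell}2$), the operator $[s-1;x,y]\circ[s;f_+,f_-]$ sends $x^\ell y^\ell$ into $\C\cdot f_+^\ell f_-^\ell$, the scalar being the product of the two constants. Reading these off at $k=m$, where $\mu_m=1$, the scalar equals $p_{m,m}\!\big((1-s-\ell)/2\big)\big/\big(4^m\big[(s+1)(s+3)\cdots(s+2m-1)\big]^{2}\big)$; substituting the gamma form of $p_{m,m}$, using the reflection and duplication formulas, and splitting $(s)_{2m}=\big[s(s+2)\cdots(s+2m-2)\big]\big[(s+1)(s+3)\cdots(s+2m-1)\big]$, this simplifies to
\[
\frac{(s-2)(s-4)\cdots(s-2m)}{4^m\,(s+1)(s+3)\cdots(s+2m-1)}=\frac{c'(s)}{2^\ell},
\]
which is (i). The main obstacle is precisely this last gamma-function bookkeeping — equivalently, verifying the two collapse identities cleanly and tracking the constants — all of which is mechanical once the $G_w$-formalism and the slice formula for $p_{j,k}$ are in place. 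If one prefers to avoid $F_w$ altogether, (i) can instead be proved by induction on $\ell$, imitating the proof of Lemma~\ref{lem:Fuv}.
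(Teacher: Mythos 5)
Your proposal is correct and follows essentially the same route as the paper's proof: expand $(4xy)^{\ell}$ in the $f_{\pm}$ basis, rewrite the Pochhammer-ratio eigenvalues in gamma form so that $[s;f_{+},f_{-}]\,x^{\ell}y^{\ell}$ becomes a multiple of the degree-$2\ell$ slice of $(1-f_{+}^2)^{-w}(1-f_{-}^2)^{-w}$ with $w=(1-s-\ell)/2$, and then use the slice formula for $p_{j,k}$ from Lemma \ref{lem:Fuv} to collapse $[s-1;x,y]$ applied to that slice into a multiple of $(x^2-y^2)^{\ell}=f_{+}^{\ell}f_{-}^{\ell}$. The differences are only presentational: you pin down the scalar by comparing the middle coefficient $k=\ell/2$ (which does give $c'(s)/2^{\ell}$, via $p_{m,m}\big(\tfrac{1-s-\ell}{2}\big)=(s-2)(s-4)\cdots(s-\ell)\,(s+1)(s+3)\cdots(s+\ell-1)$), and you make the paper's ``switch $x,y$ with $f_{+},f_{-}$'' step precise through the substitution operator $T$ with $T^2=2^{2\ell}$, which is a clean formalization of the same symmetry.
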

\begin{proof}
Part (ii) follows from part (i) by switching the roles of $x,y$ with $f_{+}, f_{-}$. We now prove part (i). Note that $ x = \frac{f_{+}+f_{-}}{2}$ and $y = \frac{f_{+}-f_{-}}{2}$, so $4xy = f_{+}^2-f_{-}^2$. Thus
\begin{equation}\label{eqn:4xy}4^{\ell} \frac{x^{\ell}y^{\ell}}{\ell!} = \sum_{j=0}^{\ell}{(-1)^{j} \frac{f_{+}^{2\ell-2j}f_{-}^{2j}}{(\ell-j)!j!}}.\end{equation}
Also, observe that
\begin{equation}\label{eqn:Dsxy} \frac{\left(\frac{1-s}{2}\right)_{\left|\frac{\ell}{2}-j\right|}}{\left(\frac{1+s}{2}\right)_{\left|\frac{\ell}{2}-j\right|}} = (-1)^{\frac{\ell}{2}-j} \,
\dfrac{\Gamma\left(\frac{1-s}{2}- (\frac{\ell}{2}-j)\right)\Gamma\left(\frac{1-s}{2}+ (\frac{\ell}{2}-j)\right)}{\Gamma\left(\frac{1-s}{2}\right)^2}.
\end{equation}
Let $w = \frac{1-s-\ell}{2}$. Then by combining \eqref{eqn:4xy} and \eqref{eqn:Dsxy}, we obtain
	\[
	(-1)^{\frac{\ell}{2}}\, 4^{\ell}\, [s;f_{+},f_{-}] \frac{x^\ell y^{\ell}}{\ell !} = \frac{\Gamma(w)^2}{\Gamma(w+\ell/2)^2} \sum_{j=0}^{\ell/2} {\frac{\Gamma(w+\ell-j)\Gamma(w+j)}{\Gamma(w)^2} \frac{f_{+}^{2\ell-2j} f_{-}^{2j}}{(\ell-j)! j!}}.
	\]
We sum $\frac{\Gamma(w+\ell/2)^2}{\Gamma(w)^2}$ times the right-hand side over non-negative even integers $\ell$ and use the identity
\[
\sum_{k \geq 0}\frac{\Gamma(w+k)}{\Gamma(w)}\frac{z^k}{k!} = (1-z)^{-w},
\]
to find that
\begin{align*}
  \frac{\Gamma(w)^2}{\Gamma(w+\ell/2)^2} (1-f_{+}^2)^{-w}(1-f_{-}^2)^{-w}
  &= \frac{\Gamma(w)^2}{\Gamma(w+\ell/2)^2} (1-2(x^2+y^2)+(x^2-y^2)^2)^{-w} \\
  &= \frac{\Gamma(w)^2}{\Gamma(w+\ell/2)^2} \sum_{j,k \geq 0}{p_{j,k}(w) \frac{x^{2j}y^{2k}}{j! k!}}.
\end{align*}
In the first equality, we used the relations $f_{+}f_{-} = x^2-y^2$ and $f_{+}^2+f_{-}^2= 2(x^2+y^2)$. The second equality follows from Lemma \ref{lem:Fuv}.

Note that if $j + k = \ell$, then
\[
  [s-1;x,y]\left(x^{2j}y^{2k}+x^{2k}y^{2j}\right) = (-1)^{\ell/2+k} \frac{\Gamma(w + j + \frac{1}{2}) \Gamma(w+k+\frac{1}{2})}{\Gamma(w+\frac{\ell+1}{2})^2} \left(x^{2j}y^{2k}+x^{2k}y^{2j}\right).
\]
Then
\begin{align*}
   4^{\ell}&\left( [s-1;x,y]\circ [s;f_{+},f_{-}] \right) \left(\frac{x^{\ell}y^{\ell}}{\ell!}\right) \\  =\frac{\Gamma(w)^2}{\Gamma(w+\ell/2)^2} \sum_{\substack{j,k \geq 0, \\ j+k = \ell}} & {(-1)^k p_{j,k}(w) \frac{\Gamma(w+j+1/2)\Gamma(w+k+1/2)}{\Gamma(w+(\ell+1)/2)^2} \frac{x^{2j}y^{2k}}{j!k!}}.
\end{align*}
By Lemma \ref{lem:Fuv},
\[
   p_{j,k}(w) \frac{\Gamma(w+j+1/2)\Gamma(w+k+1/2)}{\Gamma(w+(\ell+1)/2)^2} = \frac{\Gamma(2w+\ell)\Gamma(w+\ell+1/2)\Gamma(w+1/2)}{\Gamma(2w)\Gamma(w+(\ell+1)/2)^2}.
\]
Then
\[
4^{\ell} \big([s-1;x,y]\circ [s;f_{+},f_{-}]\big)\left( \frac{x^{\ell} y^{\ell}}{\ell!}\right)
=
   \frac{\Gamma(w)^2\Gamma(2w+\ell)\Gamma(w+\ell+1/2)\Gamma(w+1/2)}{\Gamma(w+\ell/2)^2\Gamma(w+(\ell+1)/2)^2\Gamma(2w)} \frac{(x^2-y^2)^{\ell}}{\ell !}.
\]
Here, $x^2-y^2 = f_{+}f_{-}$. Thus, rewriting the above equation in terms of $s$ gives the statement in the lemma. Indeed, the product of the gamma functions can be written as
\[
  \frac{\Gamma(2w+\ell)}{\Gamma(2w)} \cdot  \frac{\Gamma(w+\ell+1/2)}{\Gamma(w+(\ell+1)/2)} \cdot \frac{\Gamma(w+1/2)}{\Gamma(w+(\ell+1)/2)} \cdot \frac{\Gamma(w)^2}{\Gamma(w+\ell/2)^2},
\]
where each of these individual ratio of gamma functions is a rational function of $w$. We find that this rational function equals
\[
  2^{\ell} \frac{(2w+\ell+1)(2w + \ell + 3)\cdots (2w+2\ell-1)}{(2w)(2w+2)(2w+4) \cdots (2w+\ell-2)} = 2^{\ell} \frac{(s-\ell)(s-\ell+2) \cdots (s-6)(s-4)(s-2)}{(s+1)(s+3) \cdots (s+\ell-3)(s+\ell-1)}.
\]
The above is $2^{\ell} c'(s)$. This completes the proof of part (i) and hence the lemma.
\end{proof}

\begin{proof}[Proof of Proposition \ref{prop:Polyintertwine}]
The proposition now follows easily from the factorization of $M_{\operatorname{poly}}(s)$ in \eqref{eqn:Mpoly} and Lemma \ref{lem:c's}.
\end{proof}


\section{Archimedean zeta integral} \label{sec:archInt}
In this section, we explicitly compute the archimedean integral that is part of the Rankin-Selberg integral. Below, we use the symbol $\sim$ to denote equality up to a nonzero constant that may or may not depend on the weight $\ell$ of the modular form. Also, the constant that is implied by $\sim$ may be different at each occurrence of the symbol.

Recall that in Section~\ref{local integrals} we defined
\begin{equation}\label{eq:norm zeta integral}
I^*(s;\ell) =2^{s} \Gamma_\R(s-1) \Gamma_\C(s+\ell-1)\Gamma_\C(s+\ell-2)  I(s;\ell),
\end{equation}
where
\[
I(s;\ell) = \int_{N^{0,E}(\R)\backslash G_2(\R)}{\{f_{\ell}(\gamma_0 g,s), \mathcal{W}_\chi(g)\}_K\,dg}.
\]
Here $\mathcal{W}_\chi$ is the generalized Whittaker function. This means that $\mathcal{W}_\chi: G_2(\mathbb{R}) \to \mathbf{V}_{\ell}$ is a smooth function of moderate growth which satisfies the condition
\[
 \mathcal{W}_\chi (ngk)=\chi(n) k^{-1} \cdot  \mathcal{W}_\chi(g) \quad \text{for all } \, n\in N(\mathbb{R}), \, k\in K \, \, \text{and} \, \, g\in G_2(\mathbb{R}),
\]
and we have $\mathcal{D}_\ell \mathcal{W}_\chi =0 $ for the Schmid operator $\mathcal{D}_\ell$ (see \cite[p. 10]{PollackG2}). Also, the braces $\{\,,\,\}_K$ denote the $K$-equivariant pairing on $\mathbf{V}_{\ell}$ that is unique up to a scalar multiple.

Our goal is to prove the following theorem.
\begin{theorem}\label{thm:I*int}
We have
	\[
	I^*(s;\ell)
	\sim
	\Gamma_\R(s-1)\Gamma_\C(s+\ell-3)\Gamma_\C(s+\ell-2)\Gamma_\C(s+2\ell-3).
	\]
\end{theorem}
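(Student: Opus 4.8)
The plan is to start from the explicit shape of the archimedean local integral
\[
I(s;\ell) = \int_{N^{0,E}(\R)\backslash G_2(\R)}\{f_\ell(\gamma_0 g,s),\mathcal{W}_\chi(g)\}_K\,dg
\]
and carry it through the reduction to a Shintani-type integral that was already begun in \cite{PollackG2}. Applying the Iwasawa decomposition of $G_2(\R)$ relative to the Heisenberg parabolic $P = MN$ with $M \simeq \GL_2$, the integrand collapses onto an $N^{0,E}(\R)$-free, $K_\infty$-equivariant slice which can be identified with a $\GL_2(\R)$-sweep of the space $W(\R) = \operatorname{Sym}^3(V_2)\otimes\det(V_2)^{-1}$ of real binary cubic forms of \S\ref{subsec:binary cubics}. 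On this slice the flat section $f_\ell(\gamma_0 g,s)$ contributes the inducing-character factor $|\det|^{s}$ together with a $\mathbf{V}_\ell$-component depending only on the $K_\infty$-part, while $\mathcal{W}_\chi$ contributes the explicit quaternionic generalized Whittaker function of \cite{PollackQDS,PollackG2}, which on the relevant torus is a product of a power of $|\det|$, a $\mathbf{V}_\ell$-valued polynomial, and a Bessel-type factor (a modified Bessel function $K_\nu$, with index $\nu$ a linear function of $\ell$, built from the quartic invariant $q$). Pairing the two $\mathbf{V}_\ell$-components via $\{\cdot,\cdot\}_K$ leaves a scalar integral $J'(s)$ over $W(\R)$ of exactly the form studied by Shintani \cite{Shintani} in connection with the prehomogeneous vector space of binary cubic forms; this is the announced reduction.

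Next I would evaluate $J'(s)$ explicitly. The integral over $W(\R)$ splits over the two $\GL_2(\R)$-orbits of nondegenerate binary cubics — positive discriminant (three real roots, matching the split cubic ring over $\R$) and negative discriminant — and on each orbit one introduces coordinates adapted to the $\GL_2(\R)$-action: a radial variable measuring $|q(v)|$, an angular variable in a maximal compact of $\GL_2(\R)$, and a diagonal-torus variable. In these coordinates the angular integral is a finite, elementary computation — this is where the $\mathbf{V}_\ell$-bookkeeping and the $\{\cdot,\cdot\}_K$-pairing conspire to pick out a single clean factor — the diagonal-torus integral is a one-variable Gamma integral $\int_0^\infty t^{a-1}e^{-t}\,dt = \Gamma(a)$, and the radial integral against the Bessel factor is the classical Mellin transform $\int_0^\infty t^{a-1}K_\nu(t)\,dt = 2^{a-2}\Gamma\!\big(\tfrac{a+\nu}{2}\big)\Gamma\!\big(\tfrac{a-\nu}{2}\big)$. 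Multiplying these and tracking the elementary factors (powers of $2$ and $2\pi$), one finds that $J'(s)$, hence $I(s;\ell)$, equals — up to a nonzero constant and up to an elementary power of $2$ — a product of two complex Gamma factors divided by $\Gamma_\C(s+\ell-1)$.

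Finally I would feed this back into the normalization
\[
I^*(s;\ell) = 2^{s}\,\Gamma_\R(s-1)\,\Gamma_\C(s+\ell-1)\,\Gamma_\C(s+\ell-2)\,I(s;\ell)
\]
of \eqref{eq:norm zeta integral}: the $2^{s}$ absorbs the power of $2$ coming from the Bessel Mellin transform, the $\Gamma_\C(s+\ell-1)$ cancels the denominator produced above, and what survives is
\[
\Gamma_\R(s-1)\,\Gamma_\C(s+\ell-3)\,\Gamma_\C(s+\ell-2)\,\Gamma_\C(s+2\ell-3),
\]
which is precisely $L_\infty(\pi_{\ell,\infty},s-2)$ in the notation of \eqref{defn: arch factor}. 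This proves Theorem \ref{thm:I*int} and, together with Theorem \ref{thm:Ip}, gives $I(\varphi,\Phi_f,s)\sim a_\varphi(\Z^3)\,\Lambda(\pi,\operatorname{Std},s-2)$.

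The main obstacle I expect is the explicit evaluation of the real Shintani integral $J'(s)$ with the quaternionic Whittaker function inserted. One must pin down the exact Bessel-type shape of $\mathcal{W}_\chi$ restricted to the relevant slice (including the correct index and the correct $\mathbf{V}_\ell$-valued prefactor), choose coordinates on $W(\R)$ that simultaneously diagonalize the $\GL_2(\R)$-action and are compatible with the character $\chi$ attached to $v_E = wz(w+z)$, verify absolute convergence on the relevant cone for $\operatorname{Re}(s)\gg 0$, and — most delicately — carry out the $K_\infty$-type bookkeeping so that the $\{\cdot,\cdot\}_K$-pairing of the two $\mathbf{V}_\ell$-valued pieces collapses to a single product of Gamma functions rather than a finite linear combination. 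Getting the weight-dependent factor $\Gamma_\C(s+2\ell-3)$ to emerge with the correct argument is the crux of the computation.
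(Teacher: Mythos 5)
Your overall route is the same as the paper's (reduce $I(s;\ell)$ to the real Shintani-type integral $J'(s)$ using the reduction begun in \cite{PollackG2}, then assemble Gamma factors), but the crux --- the explicit evaluation of $J'(s)$ --- is exactly where your sketch does not work as stated. First, a structural point: by the time one reaches $J'(s)$ as in \eqref{eq:J' defn}, the Bessel functions are already gone. The sums of $K_0^{(j)}$-terms coming from the quaternionic Whittaker function are integrated out in the step that produces Proposition \ref{prop:I s ell and J'(s)} (via Theorem 6.2 of \cite{PollackG2}), contributing the prefactor $\Gamma(s+2\ell-3)\Gamma(s+\ell-2)\Gamma((s+\ell-3)/2)^2/\bigl(\Gamma((s+\ell)/2)\Gamma(s+\ell-3)\Gamma((3s+3\ell-7)/2)\bigr)$; the integrand of $J'$ itself carries a Gaussian $e^{-|\langle v,r_0(i)\rangle|^2}$ (and the Bessel argument is $|\langle v,r_0(i)\rangle|$, not the quartic invariant $q$). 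Moreover only the split orbit $V^*$ enters, since the character is attached to $v_E=wz(w+z)$; there is no sum over the two nondegenerate $\GL_2(\R)$-orbits.

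Second, and more seriously, your claim that after choosing radial/angular/torus coordinates the orbit integral factors into an ``elementary'' angular integral, a one-variable Gamma integral, and a single Bessel--Mellin transform misses the genuinely nontrivial input. Parametrizing split cubics as $t(w-r_1z)(w-r_2z)(w-r_3z)$, the Jacobian is $\pm t^3\prod_{i<j}(r_i-r_j)$, and after integrating out the scale $t$ (which does give a single $\Gamma(2s)$) one is left with the coupled three-variable integral $\int_{\R^3}\prod_{i}(1+r_i^2)^{-2s}\prod_{i<j}|r_i-r_j|^{2s-1}\,dr_1\,dr_2\,dr_3$, whose Vandermonde factor prevents any separation into independent one-variable integrals. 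The paper evaluates it as a Selberg-type integral (Forrester, (1.19), with $n=3$, $\alpha=\beta=2s$, $\gamma=s-\tfrac12$), giving $J'(s)\sim 2^{-6s}\Gamma(2s)\Gamma(3s-\tfrac12)/\Gamma(s+\tfrac12)^3$; the factor $\Gamma(3s-\tfrac12)/\Gamma(s+\tfrac12)^3$ is the signature of this coupled integral and cannot arise from elementary angular integrals plus a Bessel Mellin transform. Without the Selberg integral (or an equivalent identity), your plan has no way to produce this, and hence no way to reach $\Gamma_\C(s+2\ell-3)$ after the substitution $s\mapsto (s+\ell-2)/2$ and the duplication formula. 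The final bookkeeping is also looser than you suggest: $\Gamma_\C(s+\ell-1)$ does not simply cancel a denominator; it is absorbed through several applications of the duplication formula against half-integer-argument Gammas.
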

Note that by \eqref{eq:norm zeta integral}, it suffices to compute the integral $I(s;\ell)$. In Section 6 of \cite{PollackG2}, an expression for this integral was found. To state that result, we define the function
\begin{equation}\label{eq:J' defn}
J'(s) = |q(v_E)|^{-s} \int_{V^*}{|q(v)|^{s} e^{-|\langle v, r_0(i)\rangle|^2}\, \frac{dV}{|q(v)|}}.
\end{equation}
Here, $V^*$ is the $GL_2(\mathbb R)$-orbit that consists of the binary cubics that split over $\R$ and $dV$ denotes the Haar measure on $V^*$. Note that $V^*$ is a subset of $W$, which is the space of binary cubic forms. Also, $v_{E} = (0,\frac{1}{3},\frac{1}{3},0)$ corresponds to the binary cubic $x^2y+xy^2$, and $r_0(i) = (1,-i,-1,i)$ corresponds to the binary cubic $(x-iy)^3$. The quartic form $q$ and the symplectic pairing $\langle \,,\,\rangle$ are as defined in Section \ref{subsec:binary cubics}.


Our first step in computing $ I(s;\ell)$ is proving the following result.

\begin{proposition} \label{prop:I s ell and J'(s)}
We have
\[
I(s;\ell)
\sim
\pi^{-s}  \frac{\Gamma(s+2\ell-3)\Gamma(s+\ell-2)\Gamma((s+\ell-3)/2)^2}{\Gamma((s+\ell)/2)\Gamma(s+\ell-3)\Gamma((3s+3\ell-7)/2)} J'\left(\frac{s+\ell-2}{2}\right),
\]
where the function $J'$ is as defined in \eqref{eq:J' defn}.
\end{proposition}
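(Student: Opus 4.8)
The plan is to start from the expression for $I(s;\ell)$ derived in Section~6 of \cite{PollackG2}. There the integral over $N^{0,E}(\R)\backslash G_2(\R)$ is opened up using the Iwasawa decomposition along the Heisenberg parabolic $P=MN$ of $G_2$ (with Levi $M\simeq\GL_2$), the $K_\infty$-equivariance of both $f_\ell(\gamma_0 g,s)$ and of the quaternionic generalized Whittaker function $\mathcal W_\chi$, the explicit shape of $\mathcal W_\chi$ from \cite{PollackQDS}, and the one-dimensional Fourier integral against $\chi$ over $(N^{0,E}\backslash N)(\R)\simeq\R$. The outcome, which I take as the starting point, is an identity expressing $I(s;\ell)$ as an explicit elementary factor (a product of powers of $2$ and $\pi$ and a ratio coming from the combinatorial pairing $\{\,\cdot\,,\,\cdot\,\}_K$ on $\mathrm{Sym}^{2\ell}(\C^2)$, i.e.\ a sum over the $2\ell+1$ weights) times an integral over the open $\GL_2(\R)$-orbit $V^*$ of split binary cubics (the orbit of $v_E=w^2z+wz^2$), whose integrand is a power of $|q(v)|$ times a radial special function of $|\langle v,r_0(i)\rangle|$ built from the radial part of $\mathcal W_\chi$.

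The main work is then to identify this $V^*$-integral with $J'$ up to $\Gamma$-factors. First I would write the radial special function — a $K$-Bessel/confluent-hypergeometric function whose index and normalization are dictated by $s$ and $\ell$ via the Schmid-operator description of $\mathcal W_\chi$ — in one of its standard Euler-type integral representations, that is, as an integral $\int_{0}^{\infty}(\cdots)\,e^{-r\,|\langle v,r_0(i)\rangle|^2}\,\tfrac{dr}{r}$, so that the dependence of the integrand on $v$ becomes an honest Gaussian. Next I would interchange the $r$-integral with the $V^*$-integral (legitimate for $\operatorname{Re}(s)\gg0$) and rescale $v\mapsto r^{-1/2}v$; because $q$ is a quartic form, $\langle\,\cdot\,,\,\cdot\,\rangle$ is linear, and $dV$ scales with weight $4$, this change of variables collapses the inner integral to exactly $|q(v_E)|^{(s+\ell-2)/2}\,J'\!\big(\tfrac{s+\ell-2}{2}\big)$ — the argument being forced by matching the exponent of $|q(v)|$ — while depositing a clean power of $r$ outside. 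The surviving $r$-integral is a classical gamma integral; carrying it out, folding in the elementary factor from the Iwasawa reduction, and simplifying with the Legendre duplication formula $\Gamma(2z)\sim\Gamma(z)\Gamma(z+\tfrac12)$, yields the stated product $\pi^{-s}\,\Gamma(s+2\ell-3)\Gamma(s+\ell-2)\Gamma((s+\ell-3)/2)^2/\big(\Gamma((s+\ell)/2)\Gamma(s+\ell-3)\Gamma((3s+3\ell-7)/2)\big)$ in front of $J'\!\big(\tfrac{s+\ell-2}{2}\big)$, and meromorphic continuation in $s$ removes the temporary growth hypothesis.

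I expect the main obstacle to be the bookkeeping in the second step: one has to pin down precisely which special function (and with which parameter and normalizing constant) occurs as the radial part of $\mathcal W_\chi$ paired against $x^\ell y^\ell$, since this is controlled by the quaternionic discrete series computations of \cite{PollackQDS}, and then track every power of $2$ and $\pi$ and every half-integer shift through the rescaling $v\mapsto r^{-1/2}v$ and the duplication formula, so that the argument of $J'$ emerges as $\tfrac{s+\ell-2}{2}$ rather than something off by a factor of two. A convenient internal check is that combining this formula with the explicit evaluation of $J'$ in the style of Shintani \cite{Shintani} and with the normalizing prefactor $2^s\Gamma_\R(s-1)\Gamma_\C(s+\ell-1)\Gamma_\C(s+\ell-2)$ from \eqref{eq:norm zeta integral} must reproduce $L(\pi_{\ell,\infty},s-2)$ up to a nonzero constant, which constrains the normalizations tightly and will flag any arithmetic slip.
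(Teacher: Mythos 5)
Your proposed ``starting point'' is where the real content of the proposition is hiding, and it is not what Section 6 of \cite{PollackG2} actually supplies. What the Iwasawa/unfolding step gives (and what the paper quotes) is an integral over $\GL_2(\R)\times(N^{0,E}\backslash N)(\R)$ whose integrand couples the section and the Whittaker function in a non-radial way: up to constants it is $|\det(m)|^{s+\ell-2}(|\alpha|^2+|\beta|^2)^{-(s+\ell)/2}e^{2\pi i\beta}\sum_{j}\binom{\ell}{j}(i\beta)^{\ell-j}|\alpha|^{j}K_0^{(j)}(2\pi|\alpha|)$, with $\alpha$ depending on $m$ and $\beta$ the unipotent coordinate. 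Passing from this to ``an integral over $V^*$ of a power of $|q(v)|$ times a radial function of $|\langle v,r_0(i)\rangle|$'' requires performing the $\beta$-integral against the Bessel-derivative sum, collapsing the pairing over the $K$-type components, and transporting the $\GL_2(\R)$-integral to the orbit $V^*$ via $v=v_E\cdot m$ with the Jacobian relation $|\det(g)|^2=|q(v_E)|^{-1}|q(v)|$ (precisely the $|q(v_E)|^{-s}$ factor whose omission in \cite{PollackG2} the paper's Remark corrects). That passage is exactly the first part of Theorem 6.2 of \cite{PollackG2}, i.e.\ the crux; by taking the $V^*$/radial form as given you have assumed it without citing or proving it, and the steps you then outline (heat-kernel/Euler representation to make the $v$-dependence Gaussian, interchange, rescale $v\mapsto r^{-1/2}v$, evaluate the leftover gamma integral, duplication formula) are the comparatively routine tail of that computation, not a replacement for it.

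For comparison, the paper's proof does none of this analysis anew: it observes that $I(s;\ell)$ differs from the integral $I(s,\Phi)$ of \cite{PollackG2} only through the normalization of the archimedean section (the flat section takes the value $x^{\ell}y^{\ell}$ at $g=1$, whereas \cite{PollackG2} used a section whose value at $1$ is $\Gamma((s+\ell)/2)$) and through the rescaling $\beta\mapsto(2\pi)^{-1}\beta$, $m\mapsto(2\pi)^{-1}m$, which extracts $(2\pi)^{-s}$; hence $I(s;\ell)\sim(2\pi)^{-s}\Gamma((s+\ell)/2)^{-1}I(s,\Phi)$, and the stated formula follows by quoting Theorem 6.2(1) of \cite{PollackG2}. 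Your closing consistency check (recombining with the Shintani-type evaluation of $J'$ and the prefactor in \eqref{eq:norm zeta integral} to recover $L(\pi_{\ell,\infty},s-2)$) is sound and mirrors how the paper uses the proposition, but to turn the proposal into a proof you must either cite Theorem 6.2 of \cite{PollackG2} explicitly --- after which the only work left is the normalization bookkeeping just described, which is where the $\pi^{-s}$ and the $\Gamma((s+\ell)/2)$ in the statement actually come from --- or genuinely carry out the $\beta$-integral, the $K$-type collapse, and the orbit change of variables that you currently take as given.
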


\begin{proof}
Let $\chi'$ denote the archimedean part of the character $\psi(\langle v_E,\overline{n}\rangle)$, so that $\chi'(n) = e^{2\pi i \langle v_{E},\overline{n}\rangle}$. In the notation of \cite{PollackG2}, compared with the third displayed equation on page 30 of \cite{PollackG2}, we have
\[
I(s;\ell) = \int_{\GL_2(\R)}\int_{(N^{0,E}\backslash N)(\R)}{\frac{|\det(m)|^{-3} e^{2\pi i \langle v_{E},\overline{n}\rangle}}{||x(n,m)||^{(s+\ell)}} \{\text{pr}_K(x(n,m))^{\ell},\mathcal{W}_{\chi'}(m)\}_{K}\,dn\,dm}.
\]
Here, we need to note that in \emph{loc cit}, the function
\[
I(s,\Phi)= \int_{N^{0,E}(\R)\backslash GL_1(\R)\times G_2(\R)}
|t|^s \{\Phi(tg^{-1} \widetilde{v_E}), \mathcal{W}_\chi(g)\}_K\,dg
\]
is used instead. The $\Gamma((s+\ell)/2)$ in the expression for $I(s, \Phi)$ in \emph{loc cit} has disappeared here since $I(s;\ell)$ is defined in terms of the flat section $f_{\ell}(\gamma_0 g, s)$ whereas $I(s,\Phi)$ in \cite{PollackG2} was defined in terms of a section that takes the value $\Gamma((s+\ell)/2)$ at $g=1$.

Now, by following the same argument as in \cite[p. 30]{PollackG2}, we obtain
\[
I(s;\ell) \sim
\int_{\GL_2(\R) \times (N^{0,E}\backslash N)(\R)}{ \frac{ |\det(m)|^{s+\ell-2} e^{2\pi i \beta}}{(|\alpha|^2 + |\beta|^2)^{(s+\ell)/2}} \bigg(\sum_{j}{\binom{\ell}{j} (i\beta)^{\ell-j} |\alpha|^{j} K_0^{(j)}(2\pi |\alpha|)}\bigg)\,dn\,dm}.
\]
Using the change of variables $\beta \mapsto (2\pi)^{-1}\beta$ and $m \mapsto (2\pi 1_2)^{-1}m$, as $\alpha$ depends on $m$, we find that
\[
I(s;\ell) \sim
(2\pi)^{-s} \int_{\GL_2(\R) \times (N^{0,E}\backslash N)(\R)}{ \frac{ |\det(m)|^{s+\ell-2} e^{ i \beta}}{(|\alpha|^2 + |\beta|^2)^{(s+\ell)/2}} \bigg(\sum_{j}{\binom{\ell}{j} (i\beta)^{\ell-j} |\alpha|^{j} K_0^{(j)}( |\alpha|)}\bigg)\,dn\,dm}.
\]
Consequently, $I(s;\ell) \sim (2\pi)^{-s} \Gamma((s+\ell)/2)^{-1} I(s,\Phi)$. The result now follows from the first part of Theorem 6.2 in \cite{PollackG2}.
\end{proof}

\begin{remark}
In~\cite{PollackG2}, the factor $|q(v_E)|^{-s}$ was mistakenly omitted in the first part of Theorem 6.2. It should first appear in the fifth displayed equation on page 32, as $|\det(g)|^2 = |q(v_E)|^{-1}|q(v)|$, and then be carried over to the expression for $I(s,\Phi)$ in the first part of Theorem 6.2.
\end{remark}

As our next step, we now prove


\begin{proposition}\label{prop:J'int}
Let $J'(s)$ be as in \eqref{eq:J' defn}. We have
	\[
	J'(s) \sim
	2^{-6s} \Gamma(2s) \frac{\Gamma(3s-1/2)}{\Gamma(s+1/2)^3}.
	\]
\end{proposition}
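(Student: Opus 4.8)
The plan is to evaluate $J'(s)$ directly by unwinding the definition in \eqref{eq:J' defn} in terms of explicit coordinates on the orbit $V^*$ of binary cubics that split over $\R$. Such a binary cubic can be written as $v = g \cdot v_0$ for $g \in \GL_2(\R)$ and a fixed split cubic $v_0$, say $v_0 = wz(w+z)$ or the diagonal representative $wz(w-z)$; since the quartic form $q$ and the symplectic pairing $\langle\,,\,\rangle$ are $\GL_2$-equivariant with the weights recorded in Section \ref{subsec:binary cubics}, we have $q(g\cdot v_0) = \det(g)^2 q(v_0)$ and $\langle g\cdot v_0, g\cdot w\rangle = \det(g)\langle v_0, w\rangle$. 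The first step is therefore to pass from the integral over $V^*$ to an integral over $\GL_2(\R)$ (up to the stabilizer of $v_0$, which is a finite group times possibly a torus, contributing only a constant absorbed into $\sim$), pushing the Haar measure $dV$ to a Haar measure on $\GL_2(\R)$ via the Jacobian, which again scales as a power of $|\det(g)|$.

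Second, I would use the Iwasawa or polar decomposition of $\GL_2(\R)$ to make the Gaussian factor $e^{-|\langle v, r_0(i)\rangle|^2}$ explicit. The pairing $\langle g\cdot v_0, r_0(i)\rangle = \langle g\cdot v_0, (x-iy)^3\rangle$ is, up to equivariance, a specific quadratic-in-the-entries expression in $g$; writing it out shows $|\langle v, r_0(i)\rangle|^2$ becomes (after suitable change of variables) something like $|\det g|^2$ times a positive-definite quadratic form on a maximal compact quotient, so the integral factors into (i) a one-variable integral of the shape $\int_0^\infty t^{2s-1} e^{-t^2}\,dt$, i.e. a $\Gamma(s)$, coming from the $\det(g)$-radial direction, which produces the $\Gamma(2s)$ in the claimed answer, and (ii) an integral over the compact/remaining directions that, after a trigonometric substitution, reduces to a Beta-type integral yielding the ratio $\Gamma(3s-1/2)/\Gamma(s+1/2)^3$. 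The normalization $|q(v_E)|^{-s}$ out front and the factor $2^{-6s}$ arise from the explicit constants $q(v_E)$ and $q(r_0(i))$ together with the $\det(g)^2$ scaling in $q$ — one should track these carefully, since $2^{-6s} = (2^6)^{-s}$ strongly suggests a factor of the form $|q(\text{something})|^{-s}$ with the relevant discriminant equal to a power of $2$ times a cube, matching $r_0(i) = (x-iy)^3$.

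An alternative and perhaps cleaner route, which I would keep in reserve, is to recognize $J'(s)$ as (a piece of) the local zeta integral of the prehomogeneous vector space of binary cubic forms studied by Shintani \cite{Shintani}: the integral $\int_{V^*} |q(v)|^s \Phi(v)\,\frac{dV}{|q(v)|}$ against a Gaussian test function $\Phi$ is exactly a Shintani-type local zeta function for the split real orbit, and Shintani computed these in closed form in terms of gamma functions. In that case the proof reduces to quoting the relevant formula from \cite{Shintani} and matching normalizations (the orbit chosen, the measure $dV$, and the particular Gaussian $e^{-|\langle v, r_0(i)\rangle|^2}$ rather than $e^{-|v|^2}$, which differ by a $\GL_2(\R)$-translate and hence only by a power of a determinant — again a constant times a power of $|q|$). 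I expect the paper takes essentially this second approach, given the explicit reference to Shintani in the outline.

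The main obstacle, and where I would spend the most care, is the bookkeeping of constants and of the measure: tracking how $dV$ transforms under $v = g\cdot v_0$, identifying the stabilizer of $v_0$ and its contribution, and pinning down the exact powers of $2$ and $\pi$ in the $2^{-6s}$ and in any hidden $\Gamma_\R$ versus $\Gamma_\C$ discrepancies. Since the final statement is only claimed up to a nonzero constant (the $\sim$), the $s$-\emph{independent} constants can be ignored, but the $s$-\emph{dependent} exponential factors like $2^{-6s}$ cannot — so the crux is to get the scaling weight in $s$ right at every substitution. Once the reduction to a product of a $\Gamma(2s)$-integral and a Beta integral is in place, converting the Beta integral $\int_0^{\pi/2}(\sin\theta)^{a}(\cos\theta)^{b}\,d\theta$-type expression into $\Gamma(3s-1/2)/\Gamma(s+1/2)^3$ via the duplication/triplication formulas for $\Gamma$ is routine.
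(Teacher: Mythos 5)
Your proposal does not reach the actual content of the proposition. In both of your routes, the step that produces the ratio $\Gamma(3s-1/2)/\Gamma(s+1/2)^3$ is the heart of the matter, and it is exactly the step you gloss over. In route 1, after splitting off the ``radial'' direction, what remains is \emph{not} an integral over compact directions reducible to a one-variable Beta integral: parametrizing the split cubics by leading coefficient and real roots, $v=t(w-r_1z)(w-r_2z)(w-r_3z)$, the Gaussian is $e^{-t^2(1+r_1^2)(1+r_2^2)(1+r_3^2)}$ and the Jacobian and the quartic form contribute Vandermonde factors, so after the $t$-integration (which does give the $\Gamma(2s)$) one is left with the genuinely three-dimensional, noncompact integral $\int_{\R^3}\prod_i(1+r_i^2)^{-2s}\prod_{i<j}|r_i-r_j|^{2s-1}\,dr_1dr_2dr_3$. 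The coupling $\prod_{i<j}|r_i-r_j|^{2s-1}$ does not factor, and the $\Gamma(3s-1/2)$ in the numerator is precisely the signature of this coupling; it cannot come from independent Beta-type integrals after ``a trigonometric substitution.'' The paper evaluates this remaining integral by quoting the Cauchy-type Selberg integral (Forrester, formula (1.19), with $\alpha=\beta=2s$, $\gamma=s-\tfrac12$, $n=3$); some input of Selberg-integral type (or an equivalent nontrivial identity) is unavoidable, and your proposal supplies no substitute for it.

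Route 2 also has a concrete error. The weight function $e^{-|\langle v,r_0(i)\rangle|^2}$ is \emph{not} a $\GL_2(\R)$-translate of the standard Gaussian $e^{-|v|^2}$: writing $v=(a,b,c,d)$, one computes $\langle v,r_0(i)\rangle = i a + b/3 - i c/3 - d$, so $|\langle v,r_0(i)\rangle|^2=(a-c/3)^2+(b/3-d)^2$ is a rank-two positive semidefinite form on the four-dimensional space $W(\R)$, whereas $|v|^2$ has rank four; no element of $\GL_2(\R)$ relates them, so the claimed ``matching of normalizations up to a power of a determinant'' with a Shintani-type local zeta integral against a standard Gaussian breaks down. (The paper cites Shintani only for the general shape of such integrals; it does not quote a closed-form evaluation from \cite{Shintani}, and instead performs the root-parametrization computation described above.) To repair your argument you would need to either carry out the root/Jacobian computation and invoke the Selberg integral as the paper does, or genuinely verify that the specific rank-two Gaussian and orbit measure match a formula in the literature --- neither of which your proposal does.
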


\begin{proof}
To compute $J'(s)$, it suffices to integrate over those elements of $V^*$ which have nonzero leading coefficients since the set of such elements of $V^*$ has co-measure zero. Such a binary cubic can be written as
\[
t(w-r_1z)(w-r_2z)(w-r_3z)
\]
for $t, r_1, r_2, r_3 \in \R$.  To compute the integral $J'(s)$, we make the variable change
	\begin{itemize}
		\item $a = t$,
		\item $b = -t(r_1+r_2+r_3)$,
		\item $c=t(r_1 r_2 + r_2 r_3 + r_3 r_1)$,
		\item $d = - t r_1 r_2 r_3$.
	\end{itemize}
The Jacobian of this transformation equals
	\[
	\frac{\partial(a,b,c,d)}{\partial(t,r_1,r_2,r_3)} = \left|\begin{array}{cccc} 1&0 &0 &0 \\ *_1&t&t&t \\ *_2&t(r_2+r_3) & t(r_1+r_3) & t(r_1 + r_2) \\ *_3& tr_2 r_3 &tr_1 r_3 & tr_1 r_2 \end{array}\right|
	= \pm t^3 (r_1-r_2)(r_2-r_3)(r_3-r_1),
	\]
where $*_1, *_2, *_3$ denote some real numbers. Note that we have
	\[
	q(w^2z+wz^2)^{-1} q(t(w-r_1z)(w-r_2z)(w-r_3z))
	= t^4 (r_1-r_2)^2(r_2-r_3)^2(r_3-r_1)^2.
	\]
By combining this with the change of variables, we write
\begin{align*}
J'(s)
=&\int_{t,r_1,r_2,r_3} t^{4s-4} e^{-t^2(1+r_1^2)(1+r_2^2)(1+r_3^2)}
\prod_{1\leq i < j \leq 3}|r_i - r_j|^{2s-2}
\frac{\partial(a,b,c,d)}{\partial(t,r_1,r_2,r_3)} d(t,r_1,r_2,r_3) \\
=& \int_{t,r_1,r_2,r_3}t^{4s-1}
 e^{-t^2(1+r_1^2)(1+r_2^2)(1+r_3^2)}
 \prod_{1\leq i < j \leq 3}|r_i - r_j|^{2s-1}
 \,d(t,r_1,r_2,r_3) \\
 \sim
 & \, \Gamma(2s) \int_{r_1,r_2,r_3}{|1+r_1^2|^{-2s}|1+r_2^2|^{-2s}|1+r_3^2|^{-2s} \prod_{1\leq i < j \leq 3}|r_i - r_j|^{2s-1} \,d(r_1,r_2,r_3)}.
\end{align*}
The integral on the last line is a special case of the Selberg integral. From (1.19) in \cite{Forrester}  with $\alpha = \beta = 2s$, $\gamma = s-\frac12$ and $n = 3$, it follows that
\begin{align*}
 J'(s)
 & \sim 2^{-6s}\, \Gamma(2s) \prod_{j=0}^{2} \frac{\Gamma(4s-1-(2+j)(s-1/2))\Gamma(1+(j+1)(s-1/2))}{\Gamma(2s-j(s-1/2))^2 \Gamma(s+1/2)} \\
 &= 2^{-6s}\, \Gamma(2s) \frac{\Gamma(3s-1/2)}{\Gamma(s+1/2)^3}.
\end{align*}
\end{proof}


\begin{proof}[Proof of Theorem \ref{thm:I*int}]
It immediately follows from Proposition \ref{prop:J'int} that
\[
J'\left(\frac{s+\ell-2}{2}\right)
\sim
2^{-3s-3\ell+6} \Gamma(s+\ell-2) \frac{\Gamma((3s+3\ell-7)/2)}
{\Gamma((s+\ell-1)/2)^3}.
\]
By combining this with Proposition \ref{prop:I s ell and J'(s)}, we obtain
\[
I(s;\ell)
\sim (8\pi)^{-s}\, \frac{\Gamma(s+2\ell-3) \Gamma(s+\ell-2)^2 \Gamma((s+\ell-3)/2)^2}{\Gamma(s+\ell-3)\Gamma((s+\ell-1)/2)^3 \Gamma((s+\ell)/2)}.
\]
Then by \eqref{eq:norm zeta integral},
\begin{align*}
I^*(s;\ell)
&=  2^s \Gamma_\R(s-1) \Gamma_\C(s+\ell-1)\Gamma_\C(s+\ell-2)  I(s;\ell) \\
&\sim
(4\pi)^{-s}
\Gamma_\R(s-1)\Gamma_\C(s+\ell-1)\Gamma_\C(s+\ell-2)
 \\
&\quad  \times \frac{\Gamma(s+2\ell-3)\Gamma(s+\ell-2)}{\Gamma((s+\ell-1)/2)\Gamma((s+\ell)/2)} \frac{\Gamma(s+\ell-2) \Gamma((s+\ell-3)/2)^2}{\Gamma(s+\ell-3)\Gamma((s+\ell-1)/2)^2}.
\end{align*}
Further by using the duplication formula
\[
\Gamma(2z)=2^{2z-1}\pi^{-1/2}\Gamma(z) \Gamma\Big(z+\tfrac12\Big),
\]
we find that
\begin{align*}
I^*(s;\ell) &\sim \Gamma_\R(s-1)\Gamma_\C(s+\ell-1)\Gamma_\C(s+\ell-2)
 \\
&\quad  \times \frac{\Gamma_\C(s+2\ell-3)\Gamma_\C(s+\ell-2)}{\Gamma_\R(s+\ell-1)\Gamma_\R(s+\ell)} \frac{\Gamma_\C(s+\ell-2) \Gamma_\R(s+\ell-3)^2}{\Gamma_\C(s+\ell-3)\Gamma_\R(s+\ell-1)^2}\\
&=\Gamma_\R(s-1)\Gamma_\C(s+\ell-3)\Gamma_\C(s+\ell-2)\Gamma_\C(s+2\ell-3).
\end{align*}
This completes the proof.
\end{proof}

\section{Proofs of the Main Results}\label{sec: proofs of main results}
In this short section, we combine our result on the archimedean integral $I^*(s;\ell)$ with our results on other local integrals $I_p(s)$ and complete the proofs of our main results. Consider
\[
I_\ell(\varphi,s) = \int_{G_2(\Q)\backslash G_2(\A)}{\{\varphi(g),E_{\ell}^*(g,s)\}_K\,dg}
\]
as in Section \ref{thm:intro}. We have

\begin{theorem}\label{thm:globInt}
The integral $I_{\ell}(\varphi,s)$ is equal to $a_{\varphi}(\Z^3) \Lambda(\pi,\operatorname{Std},s-2)$, up to a nonzero constant.
\end{theorem}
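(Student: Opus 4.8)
The plan is to bolt together the local computations of Sections~\ref{sec:ABF}--\ref{sec:archInt} into a global Euler product. First I would relate $I_\ell(\varphi,s)$ to the Rankin--Selberg integral $I(\varphi,\Phi_f,s)$ built from the Eisenstein series $E(g,\Phi_f,s)$ of \eqref{def:E g phif s}. Since $f(g,\Phi_f,s)=\zeta(s)f_\ell(g,s)$ with $\zeta$ the (finite) Riemann zeta function, so that $E(g,\Phi_f,s)=\zeta(s)E_\ell(g,s)$, the normalization \eqref{eqn:NEis} gives
\[
I_\ell(\varphi,s)=\zeta(s)^{-1}\,\Lambda(s-1)^2\Lambda(s)\Lambda(2s-4)\,\frac{\Gamma(s+\ell-1)\Gamma(s+\ell-2)}{\Gamma(s-1)\Gamma(s-2)}\,I(\varphi,\Phi_f,s),
\]
and using $\Lambda(\sigma)=\Gamma_\R(\sigma)\zeta(\sigma)$ (so $\Lambda(s)/\zeta(s)=\Gamma_\R(s)$) this becomes
\[
I_\ell(\varphi,s)=\Gamma_\R(s-1)^2\zeta(s-1)^2\,\Gamma_\R(s)\,\Gamma_\R(2s-4)\zeta(2s-4)\,\frac{\Gamma(s+\ell-1)\Gamma(s+\ell-2)}{\Gamma(s-1)\Gamma(s-2)}\,I(\varphi,\Phi_f,s).
\]

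Next I would invoke the unfolding theorem of Section~\ref{RSI} (Gurevich--Segal, in the form of \cite[Theorem~5.2]{PollackG2}) to write $I(\varphi,\Phi_f,s)=\int_{N^{0,E}(\A)\backslash G_2(\A)}\{f(\gamma_0 g,\Phi_f,s),\varphi_\chi(g)\}_K\,dg$. The character $\chi$ is the one attached to $v_E=wz(w+z)$, which by Section~\ref{Sect. binary cubic} corresponds to the split cubic ring $\Z\times\Z\times\Z$, so $\varphi_\chi$ is the generalized Whittaker coefficient recording the $\Z^3$ Fourier coefficient of $\varphi$. By uniqueness of the relevant model — together with the fact that $\pi$ is unramified at every finite place and $\pi_\infty=\pi_{\ell,\infty}$ — the coefficient $\varphi_\chi$ is a pure tensor, $\varphi_\chi(g_f g_\infty)=a_\varphi(\Z^3)\,\mathcal{W}_\chi(g_\infty)\prod_p \mathcal{L}_p(g_p v_0)$ up to a nonzero constant, where $\mathcal{W}_\chi$ is the archimedean generalized Whittaker function of Section~\ref{sec:archInt} and $\mathcal{L}_p$ is an $(N,\chi)$-functional on $\pi_p$. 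Since $f(\gamma_0 g,\Phi_f,s)$ factors as $f_\ell((\gamma_0 g)_\infty,s)\prod_p f_p((\gamma_0 g)_p,\Phi_p,s)$ and the $\mathbf{V}_\ell$-pairing $\{\,,\,\}_K$ only involves the archimedean components, the integral factors as $a_\varphi(\Z^3)$ times $I(s;\ell)$ times $\prod_p I_p(\mathcal{L}_p,s)$, up to a nonzero constant.

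Now I would substitute the local evaluations. By Theorem~\ref{thm:Ip}, $\prod_p I_p(\mathcal{L}_p,s)$ equals $L(\pi,\operatorname{Std},s-2)/(\zeta(s-1)^2\zeta(2s-4))$ up to a nonzero constant, with $\zeta$ again the finite Riemann zeta; plugging this in, the factors $\zeta(s-1)^2\zeta(2s-4)$ cancel exactly against those in the normalizer above, leaving
\[
I_\ell(\varphi,s)\sim a_\varphi(\Z^3)\,L(\pi,\operatorname{Std},s-2)\,\Gamma_\R(s-1)^2\Gamma_\R(s)\Gamma_\R(2s-4)\,\frac{\Gamma(s+\ell-1)\Gamma(s+\ell-2)}{\Gamma(s-1)\Gamma(s-2)}\,I(s;\ell).
\]
By the gamma-function identity recorded in Section~\ref{local integrals}, the displayed product of gamma factors equals $2^{s}\Gamma_\R(s-1)\Gamma_\C(s+\ell-1)\Gamma_\C(s+\ell-2)$, so by the definition \eqref{eq:norm zeta integral} this is exactly $I^*(s;\ell)$, whence $I_\ell(\varphi,s)\sim a_\varphi(\Z^3)\,L(\pi,\operatorname{Std},s-2)\,I^*(s;\ell)$. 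Finally Theorem~\ref{thm:I*int} evaluates $I^*(s;\ell)\sim \Gamma_\R(s-1)\Gamma_\C(s+\ell-3)\Gamma_\C(s+\ell-2)\Gamma_\C(s+2\ell-3)$, which by \eqref{defn: arch factor} is $L_\infty(\pi_{\ell,\infty},s-2)$, so that $I_\ell(\varphi,s)\sim a_\varphi(\Z^3)\,L_\infty(\pi_{\ell,\infty},s-2)L(\pi,\operatorname{Std},s-2)=a_\varphi(\Z^3)\Lambda(\pi,\operatorname{Std},s-2)$.

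The one genuinely nontrivial step is the factorization in the second paragraph: one must know that the unfolded integral is Eulerian and that the global proportionality constant is the $\Z^3$ Fourier coefficient $a_\varphi(\Z^3)$. This rests on the factorizability of $\varphi_\chi$ — a uniqueness statement for the local functionals attached to $\chi$ at the relevant places — and on matching those abstract functionals with the concrete data ($\mathcal{W}_\chi$ at $\infty$, spherical $(N,\chi)$-functionals at finite $p$) used to define $I(s;\ell)$ and $I_p(\mathcal{L},s)$; here one leans on the structure theory of modular forms on $G_2$ from \cite{GanGrossSavin} and \cite{PollackG2}. (Note that Theorem~\ref{thm:Ip} shows $I_p(\mathcal{L},s)$ depends on $\mathcal{L}$ only through $\mathcal{L}(v_0)$, so the numbers $\mathcal{L}_p(v_0)$ are harmless and may be absorbed into the implied constant.) Everything after that is bookkeeping with gamma and zeta factors — which is, of course, precisely why the normalizations \eqref{eqn:NEis} and \eqref{eq:norm zeta integral} were engineered as they are.
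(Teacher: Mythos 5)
Your bookkeeping with the zeta and gamma factors is exactly what the paper does: the identity $E^*_\ell(g,s) = 2^s\zeta(s-1)^2\zeta(2s-4)\Gamma_\R(s-1)\Gamma_\C(s+\ell-1)\Gamma_\C(s+\ell-2)E(g,\Phi_f,s)$, the cancellation of $\zeta(s-1)^2\zeta(2s-4)$ against the denominators coming from Theorem \ref{thm:Ip}, the identification of the remaining factors with $I^*(s;\ell)$ via \eqref{eq:norm zeta integral}, and the evaluation of $I^*(s;\ell)$ as $L_\infty(\pi_{\ell,\infty},s-2)$ by Theorem \ref{thm:I*int} are all correct. The gap is in the step you yourself single out as the nontrivial one. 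You justify Eulerianity by asserting that $\varphi_\chi$ is a pure tensor, ``by uniqueness of the relevant model.'' No such uniqueness is available: the local functionals here are $(N,\chi)$-functionals attached to a character of the Heisenberg unipotent radical, and there is no multiplicity-one theorem for $\Hom_{N(\Q_p)}(\pi_p,\chi)$ — this is precisely the \emph{non-unique model} situation that is the whole point of the Gurevich--Segal construction. (Uniqueness of the generalized Whittaker model does hold at the archimedean place for the quaternionic discrete series, which is why $\varphi_\chi$ restricted to $G_2(\R)$ is $a_\varphi(\Z^3)\mathcal{W}_\chi$ up to a constant, but it fails, or at least is unavailable, at the finite places.) Consequently the factorization $\varphi_\chi(g_fg_\infty)=a_\varphi(\Z^3)\mathcal{W}_\chi(g_\infty)\prod_p\mathcal{L}_p(g_pv_0)$ cannot be asserted, and the decomposition of the unfolded integral into $\prod_p I_p(\mathcal{L}_p,s)$ does not make sense as written.

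The correct mechanism — the one the paper invokes by citing the ``new way'' / non-unique model technique of \cite{PSRallis}, \cite{BumpGinzburg}, \cite{GurevichSegal} — is exactly the observation you relegate to a parenthesis: Theorem \ref{thm:Ip} is proved for an \emph{arbitrary} $(N,\chi)$-functional $\mathcal{L}$, and its value depends on $\mathcal{L}$ only through $\mathcal{L}(v_0)$. One peels off one finite place at a time: with the data at all other places fixed, the map $v\mapsto$ (the corresponding coefficient of the translated cusp form) is itself an $(N,\chi)$-functional on $\pi_p$, so the local identity \eqref{eq:I p result} applies to it and replaces the integration at $p$ by the local $L$-factor times the same global coefficient evaluated at the spherical vector; iterating over all finite places and then performing the archimedean integral against $\mathcal{W}_\chi$ produces $a_\varphi(\Z^3)$ together with $I^*(s;\ell)$. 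So the independence of $I_p(\mathcal{L},s)$ from $\mathcal{L}$ is not a harmless afterthought to be absorbed into constants; it is the substitute for the uniqueness you assumed, and without rerouting the argument through it the proof does not go through. With that replacement, the rest of your computation coincides with the paper's proof.
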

\begin{proof}
Note that
\[
E_{\ell}^*(g,s) = 2^s \zeta(s-1)^2\zeta(2s-4)  \Gamma_\R(s-1) \Gamma_\C(s+\ell-1)\Gamma_\C(s+\ell-2) E(g,\Phi_f,s),
\]
where $E(g,\Phi_f,s)$ is as defined in \eqref{def:E g phif s}. Also by Theorem \ref{thm:I*int},
	\[
	I^*(s;\ell)
	\sim
	\Gamma_\R(s-1)\Gamma_\C(s+\ell-3)\Gamma_\C(s+\ell-2)\Gamma_\C(s+2\ell-3).
	\]
Taking into account the normalization of the Eisenstein series $E^*_{\ell}(g,s)$, the theorem follows directly from Theorem \ref{thm:Ip} and Theorem \ref{thm:I*int} by using the technique of ``non-unique models", also known as ``new-way (Eulerian) integral", which is explained in \cite{BumpGinzburg}, \cite{GurevichSegal} and \cite{PSRallis}.

\end{proof}

\begin{proof}[Proof of Theorem \ref{thm:intro}]
In Theorem \ref{thm:Eis}, we proved that  $E_{\ell}^*(g,s) = E_{\ell}^*(g,5-s)$. By combining this functional equation with Theorem~\ref{thm:globInt}, the result follows.

\end{proof}

\begin{proof}[Proof of Corollary \ref{cor:introDS}]
This follows from Theorem \ref{thm:Ip} exactly as in Section 5.9 of \cite{PollackG2}.
\end{proof}

\bibliographystyle{plain}
\bibliography{G2L}
\end{document}